\newtheorem{theo}{Theorem}[section]
\newtheorem{lem}[theo]{Lemma}
\newtheorem{conj}[theo]{Conjecture}
\def\qed{\hfill \rule{4pt}{7pt}}
\begin{document}
\title{A unified proof of conjectures on cycle lengths in graphs}
\author{Jun Gao\thanks{School of Mathematical Sciences, University of Science and Technology of China, Hefei, Anhui 230026, China. Email: gj0211@mail.ustc.edu.cn.}
\and
Qingyi Huo\thanks{School of Mathematical Sciences, University of Science and Technology of China, Hefei, Anhui 230026, China. Email: qyhuo@mail.ustc.edu.cn.}
\and
Chun-Hung Liu\thanks{Department of Mathematics, Texas A\&M University, College Station, Texas 77843, USA. Email: chliu@math.tamu.edu.
Partially supported by NSF under Grant No. DMS-1664593, DMS-1929851 and DMS-1954054.}
\and
Jie Ma\thanks{School of Mathematical Sciences, University of Science and Technology of China, Hefei, Anhui 230026, China. Email: jiema@ustc.edu.cn.
Partially supported by NSFC grants 11501539 and 11622110, the project ``Analysis and Geometry on Bundles'' of Ministry of Science and Technology of the People's Republic of China, and Anhui Initiative in Quantum Information Technologies grant AHY150200.}
}

\date{}

\maketitle

\begin{abstract}
In this paper, we prove a tight minimum degree condition in general graphs for the existence of paths between two given endpoints,
whose lengths form a long arithmetic progression with common difference one or two.
This allows us to obtain a number of exact and optimal results on cycle lengths in graphs of given minimum degree, connectivity or chromatic number.

More precisely, we prove the following statements by a unified approach.
\begin{enumerate}
\item Every graph $G$ with minimum degree at least $k+1$ contains cycles of all even lengths modulo $k$;
in addition, if $G$ is $2$-connected and non-bipartite, then it contains cycles of all lengths modulo $k$.
\item For all $k\geq 3$, every $k$-connected graph contains a cycle of length zero modulo $k$.
\item Every $3$-connected non-bipartite graph with minimum degree at least $k+1$ contains $k$ cycles of consecutive lengths.
\item Every graph with chromatic number at least $k+2$ contains $k$ cycles of consecutive lengths.
\end{enumerate}
The first statement is a conjecture of Thomassen,
the second is a conjecture of Dean,
the third is a tight answer to a question of Bondy and Vince,
and the fourth is a conjecture of Sudakov and Verstra\"ete.
All of the above results are best possible.
\end{abstract}

\section{Introduction}
The distribution of cycle lengths has been extensively studied in the literature
and remains one of the most active and fundamental research areas in graph theory.
In this paper, along the line of the previous work \cite{LM} of two of the authors,
we investigate various relations between cycle lengths and basic graph parameters such as minimum degree.
The core of the results in \cite{LM} is an optimal bound on the longest sequence of consecutive even cycle lengths in bipartite graphs of given minimum degree.
In the current paper, we extend this result from bipartite graphs to general graphs and use it as a primary tool to derive a number of tight results on cycle lengths in relation to minimum degree, connectivity and chromatic number.
This resolves several conjectures and open problems on cycles of consecutive lengths, cycle lengths modulo a fixed integer and some other related topics.
For a thoughtful introduction on the background, we direct interested readers to \cite{V16,LM}.

Throughout this section, let $k$ be a fixed but arbitrary positive integer, unless otherwise specified.
For a path or a cycle $P$, the {\it length} of $P$, denoted by $|P|$, is the number of edges in $P$.

\subsection{Paths and cycles of consecutive lengths}
The study of cycles of consecutive lengths can be dated back to a conjecture of Erd\H{o}s (see \cite{BV98}) stating that every graph with minimum degree at least three contains two cycles of lengths differing by one or two.
This was solved by Bondy and Vince \cite{BV98} in the following stronger form:
if all but at most two vertices of a graph $G$ have degree at least three,
then $G$ contains two cycles whose lengths differ by one or two.
Since then, this result has inspired extensive research on its generalization to $k$ cycles of consecutive (even or odd) lengths,
including results of H\"{a}ggkvist and Scott \cite{HS98}, Verstra\"ete \cite{V00}, Fan \cite{Fan02}, Sudakov and Verstra\"ete \cite{SV08}, Ma \cite{Ma}, and Liu and Ma \cite{LM}.

We say that $k$ paths or $k$ cycles $P_1,P_2,\ldots,P_k$ are {\it admissible} if $|P_1|\geq 2$
and $|P_1|,|P_2|,\ldots,|P_k|$ form an arithmetic progression of length $k$ with common difference one or two.
The following generalization of Erd\H{o}s' conjecture was posted in \cite{LM},
which was in attempt to attack some related problems.

\begin{conj}[Liu and Ma \cite{LM}]\label{conj:LM}
Every graph with minimum degree at least $k+1$ contains $k$ admissible cycles.
\end{conj}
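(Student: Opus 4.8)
The plan is to derive the conjecture from a path-version result, which (as announced in the abstract) is the technical core of the paper: a tight minimum-degree condition forcing, between two prescribed vertices, a family of paths whose lengths form a long arithmetic progression with common difference one or two. Concretely, I would aim to establish a statement of the following shape: if $H$ is a $2$-connected graph and $u,v\in V(H)$ are such that every vertex of $H$ outside $\{u,v\}$ has degree at least $k+1$, then $H$ contains $k$ admissible $u$–$v$ paths, with common difference one if $H$ is non-bipartite and common difference two if $H$ is bipartite. Granting such a tool, only two easy steps remain: a reduction to the $2$-connected case and a closing-up step.

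First I would reduce to $2$-connected graphs. Let $G$ be a graph with $\delta(G)\ge k+1$; since $k\ge 1$, $G$ is not a forest. Passing to a component if necessary and then to a leaf block $B$ of its block tree, let $c$ be the unique cut vertex of $G$ lying in $B$ (if $G$ itself is $2$-connected, take $B=G$ and let $c$ be an arbitrary vertex). Every vertex of $B$ other than $c$ is not a cut vertex of $G$, hence has all its $G$-neighbours inside $B$, so $\deg_B(w)=\deg_G(w)\ge k+1\ge 2$ for every $w\in V(B)\setminus\{c\}$. In particular $B\neq K_2$, so $B$ is $2$-connected, and $c$ is the only vertex of $B$ that may have small $B$-degree. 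Thus it suffices to find $k$ admissible cycles in $B$.

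For the closing-up step, pick a neighbour $u$ of $c$ in $B$ and set $v=c$, so $uv\in E(B)$ and every vertex of $B$ outside $\{u,v\}$ has degree at least $k+1$. Applying the tool to $H=B$ with this pair yields $k$ admissible $u$–$v$ paths $P_1,\dots,P_k$; by definition $|P_1|\ge 2$, and since each $P_i$ has length at least two it is not the single edge $uv$ and so avoids that edge. Hence $C_i:=P_i+uv$ is a cycle with $|C_i|=|P_i|+1$. The lengths $|C_1|,\dots,|C_k|$ then form an arithmetic progression of length $k$ with the same common difference $d\in\{1,2\}$, and $|C_1|\ge 3\ge 2$, so $C_1,\dots,C_k$ are $k$ admissible cycles in $B\subseteq G$. (If $d=1$ these are $k$ cycles of consecutive lengths; if $d=2$, they are $k$ cycles of consecutive even or consecutive odd lengths — either outcome is admissible.)

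The real work, and the main obstacle, is the tool itself; the two reductions above are pure bookkeeping. I expect the tool to be proved by an extremal/inductive argument: take a counterexample minimising, say, the number of edges; show it is $2$-connected, that every vertex outside $\{u,v\}$ has degree exactly $k+1$, and that no short route realises a length that would extend the progression; then analyse a longest (or otherwise extremal) $u$–$v$ path together with the chords and alternative detours forced by the degree condition, so as to either lengthen the arithmetic progression of realisable $u$–$v$ path-lengths or exhibit a vertex of degree below $k+1$, a contradiction. The delicate points will be: getting the progression to have length exactly $k$ (tightness — $K_{k+1}$ shows minimum degree $k$ does not suffice); producing common difference one precisely in the non-bipartite case, which is where an odd cycle must be spliced into the construction to shift parity; and keeping the bound uniform through the induction rather than leaking an additive constant at each step. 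Degenerate small cases ($k=1,2$) and the possibility that $u$ and $v$ lie close together should be checked separately.
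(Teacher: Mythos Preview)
Your proposal is correct and follows essentially the same route as the paper: the paper states precisely your ``tool'' as its Theorem~\ref{thm 1} (with the minor difference that it does not promise common difference one in the non-bipartite case, only that the paths are admissible), and then derives Conjecture~\ref{conj:LM} exactly as you do---pass to an end-block $B$ with cut-vertex $x$, pick a neighbour $y$ of $x$ in $B-x$, apply the tool to get $k$ admissible $x$--$y$ paths, and close each with the edge $xy$. Your assessment that the real content lies in proving the tool, and that the reduction is bookkeeping, matches the paper's organisation.
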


\noindent By considering the complete graph $K_{k+1}$ or the complete bipartite graph $K_{k,n}$ for any $n\geq k$,
we see that the condition for the minimum degree in Conjecture \ref{conj:LM} is best possible.
Being an evidence, Conjecture \ref{conj:LM} was proved for all bipartite graphs in \cite{LM}.

One of our main results is the following theorem on admissible paths with two given endpoints,
from which Conjecture \ref{conj:LM} can be inferred as a corollary.

\begin{theo}\label{thm 1}
Let $G$ be a $2$-connected graph and let $x,y$ be distinct vertices of $G$.
If every vertex of $G$ other than $x$ and $y$ has degree at least $k+1$, then there exist $k$ admissible paths from $x$ to $y$ in $G$.
\end{theo}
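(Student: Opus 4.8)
The plan is to prove Theorem~\ref{thm 1} by induction on $|V(G)|$ with $k$ fixed, working with a minimal counterexample $G$ and reducing, through a chain of structural moves, to a single ``core'' configuration that extends the technique of \cite{LM} from bipartite to general graphs. The base case $k=1$ is immediate: $2$-connectivity yields an $x$--$y$ path of length at least $2$ (take a neighbour $z\neq y$ of $x$ and a $z$--$y$ path in the connected graph $G-x$). For the inductive step, first dispose of the cheap situations: if some non-terminal vertex has degree exceeding $k+1$ --- more generally, if $G$ has an edge whose deletion preserves $2$-connectivity and the degree hypothesis --- delete it and appeal to minimality, so that $G$ becomes edge-minimal; and if $xy\in E(G)$, note that this edge is useless for admissible paths (which have length $\ge 2$) and pass to $G-xy$, or, when that is no longer $2$-connected, to the block structure it creates.

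The engine of the induction is the treatment of small vertex cuts. If $\{u,v\}$ is a $2$-cut of $G$, split $G$ along it into pieces, add a virtual edge $uv$ to each piece, and apply the inductive hypothesis inside each piece with endpoints chosen among $\{x,y,u,v\}$ so as to extract arithmetic progressions of path lengths between the relevant terminal pairs (for instance $x$--$u$, $u$--$v$ and $v$--$y$ lengths when $x$ and $y$ lie in different pieces). Concatenating paths adds their lengths, so two arithmetic progressions sharing a common difference combine into one with the same common difference, and the admissible families are assembled from the pieces in this way. One must check that each piece is $2$-connected and inherits the degree bound, and run through the positions of $x$ and $y$ relative to the cut; this bookkeeping is routine but lengthy, and it reduces the problem to graphs with no useful $2$-cut, i.e.\ essentially $3$-connected graphs.

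For the core case, split on bipartiteness. If $G$ is bipartite, every $x$--$y$ path has the same parity, so an admissible family necessarily has common difference $2$, and the required $k$ such paths are provided by the bipartite analogue of Theorem~\ref{thm 1}, which is essentially the main technical result of \cite{LM} and which I would invoke as a black box. If $G$ is non-bipartite I would aim for common difference $1$, that is, for a window $[a,a+k-1]$ of realised $x$--$y$ path lengths. Here I would extend the method of \cite{LM} rather than merely cite it: take a shortest odd cycle $O$ (necessarily induced), use the connectivity to attach $O$ to a long, carefully chosen $x$--$y$ path $P$, and then exploit two sources of flexibility --- the two arcs of $O$ between a suitable pair of its vertices, whose lengths differ by $1$, to flip parity, and the at least $k-1$ edges leaving each internal vertex of $P$, to make detours shifting $|P|$ by small controlled amounts --- interleaving the even and odd lengths produced this way. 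A tempting alternative is the bipartite double cover $\widehat G$ (bipartite, and $2$-connected since $G$ is $2$-connected and non-bipartite), but the fibres over both $x$ and $y$ become exceptional there, giving four exceptional vertices rather than the two that \cite{LM} allows, so some preliminary surgery (e.g.\ identifying the two vertices over $y$) would be needed before that route could be used.

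The step I expect to be the main obstacle is controlling the two parities simultaneously in the non-bipartite core case. Producing an admissible family with common difference $1$ forces the set of even $x$--$y$ path lengths and the set of odd $x$--$y$ path lengths to overlap in a common block of $k$ consecutive integers, whereas the separation reductions and the odd-cycle/detour constructions most naturally yield long but independently positioned runs of each parity. Showing that these runs can be aligned --- or, when they cannot, extracting $k$ consecutive even or $k$ consecutive odd lengths directly --- and doing all of this while keeping the starting term of the progression at least $2$, is the crux, and is where an argument genuinely beyond the bipartite case of \cite{LM} appears to be needed.
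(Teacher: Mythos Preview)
Your proposal is not a proof: the non-bipartite ``core'' case, which you yourself flag as the main obstacle, is left as a sketch. The plan there --- attach a shortest odd cycle to a long $x$--$y$ path and interleave parities via detours --- comes with no mechanism for forcing the even window and the odd window of realised lengths to overlap in a common block of $k$ consecutive integers, and that alignment is the entire difficulty. Without it you have at best two separate runs of common difference~$2$, which is what the bipartite machinery already gives and does not produce $k$ consecutive lengths. The double-cover route fails for the reason you state. So the heart of the theorem is simply missing.

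The $2$-cut reduction is also not the ``routine but lengthy bookkeeping'' you describe. If $\{u,v\}$ separates $x$ from $y$ and you work in the piece $A\ni x$ with terminals $x$ and $u$, the vertex $v$ may have degree below $k+1$ in $A+uv$, so the inductive hypothesis does not apply: you now have \emph{three} exceptional vertices in that piece, not two. One can try to prove a three-exceptional-vertex variant giving $k-1$ admissible paths and then check that concatenating two such families across the cut still yields $k$, but such a variant is itself a consequence of the theorem, so it has to be built into the induction rather than quoted; you have not done this.

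For comparison, the paper takes a quite different route. It never reduces to $3$-connected graphs and never splits on bipartiteness. In a minimal counterexample it first shows that no triangle in $G-y$ contains $x$, then that some $4$-cycle in $G-y$ does contain $x$, and from this extracts a maximal induced complete bipartite subgraph $Q$ with $x$ on one side; the degree hypothesis then tightly bounds how vertices outside $Q$ attach to $Q$. The remainder is a case analysis on the component of $G-V(Q)$ containing $y$: one repeatedly contracts pieces of $Q$ or of that component to single vertices, applies minimality to get shorter admissible families, and concatenates them with the many paths available inside $Q$ via a simple ``sum of two arithmetic progressions'' lemma. Because that lemma treats common differences $1$ and $2$ uniformly, the parity-alignment problem you isolate never has to be solved on its own.
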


\noindent The case $k=1$ of Theorem \ref{thm 1} is trivial, and the case $k=2$ follows from a result of Fan in \cite{Fan02}.
We remark that Theorem \ref{thm 1} is the main force that will be applied to prove all other results in this paper.

We now show that Conjecture \ref{conj:LM} is an easy corollary of Theorem \ref{thm 1}.
(For possibly ambiguous notations, we refer readers to Section \ref{sec:prelim}.)

\begin{theo}\label{mainthm1}
Every graph $G$ with minimum degree at least $k+1$ contains $k$ admissible cycles.
\end{theo}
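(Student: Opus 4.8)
The plan is to derive Theorem~\ref{mainthm1} from Theorem~\ref{thm 1}. The idea is to locate inside $G$ a $2$-connected subgraph $B$ together with an edge $xy\in E(B)$ such that every vertex of $B$ other than $x$ and $y$ has degree at least $k+1$ in $B$; then Theorem~\ref{thm 1} supplies $k$ admissible paths from $x$ to $y$ in $B$, and closing each of them off with the edge $xy$ turns them into $k$ admissible cycles of $G$.

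First I would pass to an endblock. Since $\delta(G)\ge k+1\ge 2$, $G$ has an edge and therefore an endblock $B$ (a block of $G$ containing at most one cut vertex of $G$). If $w\in V(B)$ is not a cut vertex of $G$, then every edge of $G$ at $w$ lies in $B$, since an edge $wz$ belonging to a block $B'\neq B$ would put $w$ in two distinct blocks and hence make it a cut vertex; thus $\deg_B(w)=\deg_G(w)\ge k+1$. As $B$ has at least two vertices and at most one of them is a cut vertex of $G$, such a vertex $w$ exists, and it has $B$-degree at least $2$; hence $B$ is neither $K_1$ nor $K_2$ and is therefore $2$-connected. Moreover, every vertex of $B$ has $B$-degree at least $k+1$, with the sole possible exception of the cut vertex $c$ of $G$ contained in $B$, if one exists.

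Next I would choose the pair $x,y$. If $B$ contains a cut vertex $c$ of $G$, set $x:=c$ and let $y$ be any $B$-neighbour of $c$ (which exists since $B$ is $2$-connected); otherwise $B=G$ is $2$-connected and I take $xy$ to be an arbitrary edge of $G$. Either way $xy\in E(B)$ and every vertex of $B$ distinct from $x$ and $y$ has $B$-degree at least $k+1$, so Theorem~\ref{thm 1} applies to $B$ with endpoints $x,y$ and produces admissible paths $P_1,\dots,P_k$ from $x$ to $y$ in $B$. Because $|P_1|\ge 2$ and $|P_1|,\dots,|P_k|$ is an arithmetic progression with common difference $1$ or $2$, each $P_i$ has length at least $2$, so $P_i$ does not use the edge $xy$; hence $C_i:=P_i+xy$ is a cycle of $B\subseteq G$ with $|C_i|=|P_i|+1$. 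Then $|C_1|,\dots,|C_k|$ is an arithmetic progression of length $k$ with the same common difference, $|C_1|=|P_1|+1\ge 3$, and the $C_i$ have pairwise distinct lengths, so $C_1,\dots,C_k$ are $k$ admissible cycles of $G$, as required.

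Given Theorem~\ref{thm 1}, this argument has no real obstacle; the only points needing attention are verifying that an endblock is genuinely $2$-connected under the degree hypothesis and arranging that the at most one vertex of $B$ of possibly small degree is absorbed into $\{x,y\}$. All of the difficulty of the paper lies in Theorem~\ref{thm 1} itself.
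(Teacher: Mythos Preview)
Your proof is correct and follows essentially the same approach as the paper: pass to an end-block $B$, take $x$ to be the cut-vertex (or arbitrary if $G$ is already $2$-connected) and $y$ a neighbour of $x$ in $B$, apply Theorem~\ref{thm 1}, and close each admissible path with the edge $xy$. You supply a bit more detail (e.g.\ why $B$ is genuinely $2$-connected and why no $P_i$ uses the edge $xy$), but the argument is the same.
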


\begin{proof}
If $G$ is $2$-connected, let $B=G$ and choose $xy$ to be an arbitrary edge in $G$;
otherwise, let $B$ be an end-block of $G$ with cut-vertex $x$ and choose $y\in N_G(x)\cap V(B-x)$.
Clearly, $B$ is $2$-connected and every vertex of $B$ other than $x$ has degree at least $k+1$.
By Theorem \ref{thm 1}, $B$ contains $k$ admissible paths $P_1,...,P_k$ from $x$ to $y$.
Since each $|P_i|\geq 2$, $P_i\cup xy$ for all $i\in [k]$ form $k$ admissible cycles in $G$.
\end{proof}

Theorem \ref{mainthm1} improves many previous results such as the results in \cite{HS98,V00,Fan02,LM}.
As the writeup of a version of this paper was close to complete, we noticed that very recently, Chiba and Yamashita \cite{CY19} independently proved Theorem \ref{mainthm1} under an extra condition that $G$ is $2$-connected, by using a different approach from this paper.

One can ask another natural question: what are necessary or sufficient conditions for the existence of $k$ cycles of consecutive lengths?
It is clear that such conditions should include non-bipartiteness.
This was addressed by Bondy and Vince in \cite{BV98},
where they proved that any non-bipartite $3$-connected graph contains two cycles of consecutive lengths.
On the other hand, Bondy and Vince showed that the 3-connectivity is necessary by constructing infinitely many non-bipartite $2$-connected graphs
with arbitrarily large minimum degree, yet not containing two cycles of consecutive lengths.

More generally, Bondy and Vince \cite{BV98} asked if there exists a (least) function $f$ such that every non-bipartite $3$-connected graph with minimum degree at least $f(k)$ contains $k$ cycles of consecutive lengths.
The existence of $f(k)$ was confirmed by Fan \cite{Fan02}, where he proved $f(k)\leq 3\lceil k/2\rceil$.
On the other hand, the complete graph $K_{k+1}$ shows $f(k)\geq k+1$.

Our next result determines $f(k)=k+1$ and hence provides the optimal answer to the above question of Bondy and Vince.

\begin{theo}\label{nonbi 3}
Every non-bipartite $3$-connected graph with minimum degree at least $k+1$ contains $k$ cycles of consecutive lengths.
\end{theo}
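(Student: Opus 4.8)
The plan is to reduce Theorem~\ref{nonbi 3} to Theorem~\ref{thm 1} by producing, inside a suitable $2$-connected subgraph of $G$, two distinct vertices $x,y$ both of whose degrees in that subgraph are at least $k+1$, together with an $x$--$y$ path $Q$ of odd length that is \emph{internally disjoint} from the subgraph; combining $Q$ with the $k$ admissible $x$--$y$ paths supplied by Theorem~\ref{thm 1} will then yield $k$ cycles whose lengths differ by one or two, and the parity shift coming from the odd path $Q$ is what lets us upgrade "admissible" (common difference one or two) to "consecutive" (common difference one). More precisely, if $P_1,\dots,P_k$ are admissible $x$--$y$ paths and $|Q|$ is odd, then the cycles $P_i\cup Q$ have lengths $|P_i|+|Q|$; when the common difference of the arithmetic progression $|P_1|,\dots,|P_k|$ is one we are already done, and when it is two the cycle lengths $|P_i|+|Q|$ are $k$ values in an arithmetic progression with common difference two all of which have the same parity, so we need a second family to fill the gaps — here is where non-bipartiteness and $3$-connectivity must be exploited more carefully.

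A cleaner route, and the one I would pursue, is the following. Since $G$ is non-bipartite and $3$-connected, it contains an odd cycle $C$. Using $3$-connectivity, I would find a subgraph $H$ of $G$ and two vertices $x,y\in V(H)$ so that (i) $H$ is $2$-connected, (ii) every vertex of $H$ other than possibly $x,y$ has degree at least $k+1$ in $H$, and (iii) $G$ contains two internally-disjoint-from-$H$ paths between $x$ and $y$, one of even length and one of odd length. The existence of paths of both parities between two vertices is exactly what non-bipartiteness buys us (a graph is bipartite iff all paths between any fixed pair of vertices have the same parity); $3$-connectivity is used to guarantee that after removing the "attachment" structure we still have a $2$-connected piece $H$ with the degree condition essentially intact on its interior, and that the two external paths of different parities can be routed disjointly from $H$. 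Apply Theorem~\ref{thm 1} to $H,x,y$ to get admissible paths $P_1,\dots,P_k$ with lengths forming an arithmetic progression of length $k$ and common difference $d\in\{1,2\}$. If $d=1$, attach any one external path and finish. If $d=2$, attach the even external path $Q_0$ to get cycles of lengths $|P_i|+|Q_0|$ and attach the odd external path $Q_1$ to get cycles of lengths $|P_i|+|Q_1|$; since $|Q_0|$ and $|Q_1|$ have opposite parities and the $|P_i|$ run through an arithmetic progression with difference $2$, the union of the two length-sets is a set of $2k$ integers that, suitably truncated, contains $k$ consecutive integers — giving $k$ cycles of consecutive lengths.

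The main obstacle is Step (i)--(iii): carving out the right $2$-connected subgraph $H$ together with the two external paths of opposite parity, while preserving the minimum-degree bound $k+1$ on the interior of $H$. One has to be careful that the vertices we designate as $x$ and $y$ are genuinely "boundary" vertices whose lost degree is compensated, and that the routing of the external paths does not steal edges needed to keep $H$ $2$-connected; this is a classical but delicate "block/ear-decomposition plus Menger" argument. A natural implementation: take a shortest odd cycle $C$ and a vertex $v\in V(C)$; by $3$-connectivity there are three paths from $v$ to $C-v$, or more simply, consider an appropriately chosen ear decomposition witnessing non-bipartiteness, let $H$ be $G$ minus the interior of one well-chosen odd-length ear, and take $x,y$ to be the endpoints of that ear — then $H$ is still $2$-connected, the ear gives the odd external path, and a second $x$--$y$ path through $H$ (or a second ear) gives a path of the other parity after a short parity computation. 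Verifying the degree condition on $H$ amounts to checking that at most two vertices (namely $x$ and $y$) can have their degree drop below $k+1$, which is exactly the hypothesis format of Theorem~\ref{thm 1}, so the reduction closes. Once the structural lemma is in hand, the arithmetic in the $d=2$ case is the short routine computation sketched above.
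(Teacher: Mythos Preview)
Your approach is in the right spirit but has two genuine gaps.

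First, the arithmetic in the $d=2$ case does not work as stated. If the admissible paths $P_1,\dots,P_k$ have lengths $a, a+2, \dots, a+2(k-1)$ and the external paths have lengths $e$ (even) and $o$ (odd) with, say, $o = e + 2j+1$ for some $j \ge 0$, then the two length-sets $\{a+e+2i : 0 \le i \le k-1\}$ and $\{a+o+2i : 0 \le i \le k-1\}$ interleave to produce a run of consecutive integers only of length $2k - 2j$. To extract $k$ consecutive lengths you therefore need $\lvert e - o \rvert \le k$, and nothing in your construction controls $\lvert e - o \rvert$. ``Opposite parity'' alone is not enough; you need the two external paths to have lengths differing by \emph{exactly one} (or at worst by a quantity bounded in terms of $k$).

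Second, the structural step (i)--(iii) is not established. Removing the interior of a single odd ear produces \emph{one} $x$--$y$ path internally disjoint from $H$; your suggestion for the second path (``through $H$'' or ``a second ear'') does not work: a path through $H$ is by definition not internally disjoint from $H$, and if you also delete a second ear then $H$ changes and the degree accounting has to be redone. You need a piece of $G$ disjoint from $H$ that simultaneously carries an even and an odd $x$--$y$ path with nearly equal lengths, and the ear-decomposition picture does not hand you this.

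The paper's proof resolves both issues at once by invoking a single structural fact: every non-bipartite $3$-connected graph contains a \emph{non-separating induced odd cycle} $C$ (this is a lemma of Bondy and Vince, refined further via a lemma from \cite{LM}). One then takes $H$ to be $G-V(C)$ (or an end-block of it) and chooses $x,y \in V(H)$ adjacent in $G$ to near-antipodal vertices $v_0, v_s$ of the $(2s{+}1)$-cycle $C$. The two arcs of $C$ between $v_0$ and $v_s$ have lengths $s$ and $s+1$: these are exactly your two external paths, of opposite parity and differing by one. With this in hand, your arithmetic and your structural reduction both go through. The missing idea in your proposal is precisely this non-separating induced odd cycle; once you have it, the argument you outline becomes (essentially) the paper's proof of Theorem~\ref{nonseparating cycle->consecutive cycles}.
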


\subsection{Cycle lengths modulo a fixed integer}
Burr and Erd\H{o}s initiated the study of cycle lengths modulo an integer $k$;
they conjectured (see \cite{Erd76}) that for odd $k$ there exists a constant $c_k$
such that every graph with average degree at least $c_k$ contains cycles of all lengths modulo $k$.
This was proved by Bollob\'as \cite{B77} and then the value $c_k$ was improved to be $O(k^2)$ by Thomassen in \cite{Th83,Th88}.
Thomassen also proposed two conjectures in \cite{Th83} as follows.

\begin{conj}[Thomassen \cite{Th83}]\label{conj:Thom1}
Every graph with minimum degree at least $k+1$ contains cycles of all even lengths modulo $k$.
\end{conj}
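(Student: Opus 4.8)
The plan is to deduce Conjecture~\ref{conj:Thom1} from Theorem~\ref{thm 1} (equivalently, from Theorem~\ref{mainthm1}) by analysing the arithmetic progression of cycle lengths it produces, and by treating one residual case separately. Recall that ``cycles of all even lengths modulo $k$'' means that for every even integer $e$ there is a cycle of length $\equiv e \pmod k$; when $k$ is odd this is just ``cycles of all lengths modulo $k$'', and when $k$ is even it asks for a cycle of length $\equiv r \pmod k$ for every even residue $r$. First I would reduce to a convenient $2$-connected graph exactly as in the proof of Theorem~\ref{mainthm1}: if $G$ is not $2$-connected, let $B$ be an end-block of $G$ with cut-vertex $x$ and pick $y \in N_B(x)$, and if $G$ is $2$-connected let $B=G$ and let $xy$ be an arbitrary edge. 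In both cases $B$ is $2$-connected, $xy \in E(B)$, every vertex of $B$ other than $x$ and $y$ has degree at least $k+1$ in $B$, and any cycle found in $B$ is a cycle of $G$.

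Next, apply Theorem~\ref{thm 1} to $B$ with endpoints $x,y$ to obtain admissible paths $P_1,\dots,P_k$ from $x$ to $y$ with $|P_i| = |P_1| + (i-1)d$ for some $d \in \{1,2\}$ and $|P_1| \ge 2$; then the cycles $C_i := P_i \cup xy$ have lengths $|P_i|+1$, which form an arithmetic progression of length $k$ with common difference $d$ and first term at least $3$. If $d=1$, the $|C_i|$ are $k$ consecutive integers and so hit every residue modulo $k$. If $d=2$ and $k$ is odd, then $2$ is invertible modulo $k$, so the $k$ lengths are pairwise incongruent modulo $k$ and again hit every residue. If $d=2$, $k$ is even and $|C_1|$ is even, then the $|C_i|$ hit exactly the even residues modulo $k$. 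In each of these cases we are done, so the only surviving case is $k$ even, $d=2$, with $C_1,\dots,C_k$ all odd cycles; in particular $B$ is non-bipartite.

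For the residual case I would ``flip parity'' using an odd cycle through the exceptional vertex. Since $B$ is $2$-connected and non-bipartite, every vertex of $B$ lies on an odd cycle: given a vertex $v$ not on a fixed odd cycle of $B$, a fan of two paths from $v$ to that cycle together with its two arcs between their endpoints produces two cycles through $v$ whose lengths differ by an odd number, so one of them is odd. Applying this to $v=x$ gives an odd cycle $O$ of $B$ through $x$; choose an edge $x'y' \in E(O)$ incident with $x$, so that $x'=x$ (if $G$ is $2$-connected, so that there is no exceptional vertex, take $O$ to be any odd cycle of $B$ and $x'y'$ any edge of it). Then $B$ is still $2$-connected with every vertex other than $x'$ and $y'$ of degree at least $k+1$, while $O-x'y'$ is a path from $x'$ to $y'$ of even length $|O|-1$. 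Re-apply Theorem~\ref{thm 1} to $B$ with endpoints $x',y'$ to get admissible paths $Q_1,\dots,Q_k$ from $x'$ to $y'$; the cycles $Q_i \cup x'y'$ have lengths $|Q_i|+1$ (since $|Q_i|\ge 2$ this is always a cycle), and whenever $Q_i$ is internally disjoint from $O$ the cycles $Q_i \cup (O-x'y')$ have lengths $|Q_i|+|O|-1$. Since $|O|-2$ is odd, these two families of cycles lie in opposite parity classes, and the same modular analysis as above shows that together they hit every residue modulo $k$; combined with the odd cycles $C_1,\dots,C_k$ this completes the residual case.

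The hard part is the last step: Theorem~\ref{thm 1} gives no control over how the paths $Q_i$ meet the prescribed cycle $O$, so the delicate point is to harvest, in spite of possible intersections, a full-length arithmetic progression of \emph{even} cycle lengths without ceding length-control to a short ear of $O$. I expect this to require either a careful choice of $O$ (for instance a shortest odd cycle) together with a Menger-type rerouting of the $Q_i$ along ears of $O$, or a small modification of the set-up, such as deleting the interior vertices of a minimal odd cycle and verifying that the hypotheses of Theorem~\ref{thm 1} survive up to the two permitted exceptional vertices.
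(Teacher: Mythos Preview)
Your reduction to a $2$-connected block $B$ with an edge $xy$ and your application of Theorem~\ref{thm 1} to obtain $k$ admissible cycles is exactly how the paper proceeds (via Theorem~\ref{mainthm1}), and your case analysis for $d=1$, for $d=2$ with $k$ odd, and for $d=2$ with $k$ even and $|C_1|$ even is correct. In particular, for odd $k$ your argument is complete and coincides with the paper's: since $2$ is invertible modulo $k$, the $k$ admissible cycle lengths hit every residue class.

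The gap is in your residual case ($k$ even, common difference $2$, all $C_i$ odd), and you have correctly identified it yourself. Re-applying Theorem~\ref{thm 1} with endpoints $x',y'$ on an odd cycle $O$ gives admissible paths $Q_1,\dots,Q_k$, but nothing prevents you from landing in the same residual case again: the $Q_i\cup x'y'$ may all be odd, and then you need $Q_i\cup(O-x'y')$ to be a cycle, which fails whenever $Q_i$ meets $O$ internally. Neither of your suggested fixes is straightforward. Choosing $O$ shortest makes it induced but does not keep the $Q_i$ off $V(O)$; deleting the interior of $O$ destroys the degree hypothesis at every vertex with several neighbours on $O$, and Theorem~\ref{thm 1} allows only two exceptional vertices. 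So as written the even-$k$ case is genuinely incomplete.

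The paper does not attempt to close this gap from Theorem~\ref{thm 1} alone. Instead it invokes the earlier paper~\cite{LM}, where Conjecture~\ref{conj:Thom1} (and Conjecture~\ref{conj:Thom2}) was already established for all even $k$ by a separate argument; Theorem~\ref{mainthm1} is then used only to supply the missing odd-$k$ case. If you want a self-contained proof for even $k$, you would need to import the relevant machinery from~\cite{LM} (which builds even-length progressions directly, rather than trying to flip the parity of an odd progression after the fact).
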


\begin{conj}[Thomassen \cite{Th83}]\label{conj:Thom2}
Every $2$-connected non-bipartite graph with minimum degree at least $k+1$ contains cycles of all lengths modulo $k$.
\end{conj}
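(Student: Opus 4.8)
The plan is to apply Theorem~\ref{thm 1} twice: once along an ordinary edge of $G$, which yields many cycles of a single parity, and once along an even path lying on an odd cycle, which flips the parity; together the two families should hit every residue class modulo $k$. First I would settle $k\le 2$ by hand and then dispose of all odd $k$ at once: as $G$ is $2$-connected with all degrees at least $k+1$, Theorem~\ref{thm 1} applied to the endpoints of an arbitrary edge $xy$ produces $k$ admissible paths $Q_1,\dots,Q_k$ from $x$ to $y$, and since each $|Q_i|\ge 2$ the cycles $Q_i\cup xy$ have lengths forming a length-$k$ arithmetic progression with common difference $d\in\{1,2\}$; when $k$ is odd we have $\gcd(d,k)=1$, so these $k$ lengths already represent all residues modulo $k$. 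Hence assume $k$ is even; since $G$ contains cycles of all even lengths modulo $k$ (Conjecture~\ref{conj:Thom1}, also proved in this paper), it remains to hit every odd residue, for which it suffices --- as $k$ is even and $k\ge4$ --- to produce $k/2$ cycles whose lengths form an arithmetic progression with common difference $2$ consisting of odd integers.

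Fix a shortest odd cycle $C$ of $G$, say of length $g$, and an edge $xy\in E(C)$, so that $P:=C-xy$ is an $x$--$y$ path of even length $g-1$. Apply Theorem~\ref{thm 1} to $G$ with endpoints $x,y$ to obtain admissible paths $Q_1,\dots,Q_k$ with $|Q_i|=\ell+(i-1)d$, where $d\in\{1,2\}$ and $\ell\ge2$. The cycles $A_i:=Q_i\cup xy$ are legitimate, of length $\ell+1+(i-1)d$; if $d=1$ these are $k$ consecutive integers and we are done, so assume $d=2$, in which case the $A_i$ realise precisely the residues of parity $\ell+1$. On the other hand, for each index $i$ with $V(Q_i)\cap V(C)=\{x,y\}$ the cycle $B_i:=Q_i\cup P$ is also legitimate, of length $\ell+g-1+2(i-1)$, and since $g-1$ is even the $B_i$ have the opposite parity to the $A_i$. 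Consequently, if every $Q_i$ is internally disjoint from $P$, the families $\{A_i\}$ and $\{B_i\}$ together cover all residues modulo $k$ and the proof is complete.

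The difficulty is thus confined to the case $d=2$ in which some admissible path $Q_i$ meets $V(C)\setminus\{x,y\}$, i.e.\ the interior of $P$. To handle it I would re-apply Theorem~\ref{thm 1}, not inside $G$, but inside a $2$-connected block $B^\ast$ of $G-(V(C)\setminus\{x,y\})$ containing both $x$ and $y$; every $x$--$y$ path of $B^\ast$ is then automatically internally disjoint from $P$, so the two-family argument above applies as soon as $B^\ast$ supplies enough such paths. This is exactly where taking $C$ to be a shortest odd cycle helps: for $g\ge5$ each vertex outside $C$ has at most two neighbours on $C$ (a third would force a shorter odd cycle), so deleting $V(C)\setminus\{x,y\}$ decreases every relevant degree by at most $2$, leaving minimum degree at least $k-1$ away from $x,y$, and Theorem~\ref{thm 1} then still yields $k-2\ge k/2$ admissible $x$--$y$ paths in $B^\ast$ (for $g=3$ only a single vertex is removed and the loss is at most $1$); combining these with $xy$ and with $P$, which shift lengths by $1$ and by $g-1$ --- numbers of opposite parity --- produces the sought progression of odd-length cycles, patching a bounded number of residues with a further application of Theorem~\ref{thm 1} if need be. I expect the real work to be in this final stage: guaranteeing that such a block $B^\ast$ survives the deletion so that Theorem~\ref{thm 1} can be invoked, and carrying out the degree-versus-residue bookkeeping using only $2$-connectivity, while disposing of the degenerate cases (notably $C=G$, where $G$ is merely an odd cycle and $k\le1$).
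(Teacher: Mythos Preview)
Your odd-$k$ argument is correct and coincides with the paper's: the paper deduces Conjecture~\ref{conj:Thom2} for odd $k$ as an immediate corollary of Theorem~\ref{mainthm1} (equivalently, Theorem~\ref{thm 1} applied across an edge), exactly as you do, since $k$ admissible cycles with common difference $1$ or $2$ hit every residue class modulo an odd $k$.

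For even $k$, however, the paper does \emph{not} give a proof here at all: it simply quotes \cite{LM}, where the even case was already established by different methods. Your attempt to supply a self-contained argument for even $k$ has a genuine gap in the step you yourself flag as ``the real work''. Passing to a block $B^\ast$ of $G-(V(C)\setminus\{x,y\})$ containing the edge $xy$ is fine set-theoretically, but Theorem~\ref{thm 1} requires every vertex of $B^\ast$ other than $x,y$ to have degree at least (roughly) $k-1$ \emph{inside $B^\ast$}. Your neighbour-count on $C$ only controls the degree in $G-(V(C)\setminus\{x,y\})$; any cut-vertex of that graph lying in $B^\ast$ may have almost all of its neighbours in other blocks, so its degree in $B^\ast$ can be as small as $2$. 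With only $2$-connectivity of $G$ available, there is no mechanism preventing this --- indeed $B^\ast$ could degenerate to the single edge $xy$. The suggested fallback of ``patching a bounded number of residues with a further application of Theorem~\ref{thm 1}'' does not address this structural problem, because the obstruction is not a shortfall of a few residues but the possible absence of any $2$-connected piece with the required minimum degree disjoint from the interior of $P$.

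In short: your plan reproduces the paper's proof for odd $k$, but for even $k$ it does not close, and the paper itself relies on \cite{LM} rather than on Theorem~\ref{thm 1} for that case.
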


\noindent We remark that 2-connectivity and non-bipartiteness are necessary
for even $k$ in Conjecture \ref{conj:Thom2}; see \cite{LM} for explanations.
The minimum degree condition in Conjectures \ref{conj:Thom1} and \ref{conj:Thom2} are tight,
since $K_{k+1}$ has no cycle of length 2 modulo $k$, and $K_{k,n}$ has no cycle of length 2 modulo $k$ for $n\geq k$ and odd $k$.

Results of Verstra\"ete \cite{V00}, Fan \cite{Fan02}, Diwan \cite{D10} and Ma \cite{Ma} indicate that the minimum degree at least $O(k)$ suffices for both conjectures.
For fixed $m\geq 3$ and large $k$, Sudakov and Verstra\"ete \cite{SV17} determined the optimal minimum degree condition for cycles of length $m$ modulo $k$ up to a constant factor.

In \cite{LM}, Liu and Ma confirmed both Conjectures \ref{conj:Thom1} and \ref{conj:Thom2} for even $k$.
They also proved that minimum degree $k+4$ suffices for odd $k$,
and observed that an affirmative of Conjecture \ref{conj:LM} would
imply both Conjectures \ref{conj:Thom1} and \ref{conj:Thom2} for odd $k$.
Therefore, as an immediate corollary of Theorem \ref{mainthm1}, we obtain the following.

\begin{theo}\label{thm:twoThomassen}
Conjectures \ref{conj:Thom1} and \ref{conj:Thom2} hold for any positive integer $k$.
\end{theo}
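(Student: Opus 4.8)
The plan is to reduce everything to results we may assume, splitting according to the parity of $k$. For even $k$, both Conjectures \ref{conj:Thom1} and \ref{conj:Thom2} were already established by Liu and Ma in \cite{LM}, so in that case nothing remains to be proved and one only cites that paper. Hence the whole point is the case of odd $k$, and here I would simply combine Theorem \ref{mainthm1} with the reduction already observed in \cite{LM}.

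In detail, suppose $k$ is odd and let $G$ be a graph with minimum degree at least $k+1$. I would first apply Theorem \ref{mainthm1} to obtain $k$ admissible cycles $C_1,\dots,C_k$, whose lengths by definition form an arithmetic progression of length $k$ with common difference $d\in\{1,2\}$; write $|C_i|=\ell+(i-1)d$ with $\ell\ge 3$. If $d=1$, the lengths $\ell,\ell+1,\dots,\ell+k-1$ are $k$ consecutive integers and so represent every residue modulo $k$. If $d=2$, the lengths are $\ell,\ell+2,\dots,\ell+2(k-1)$, and since $\gcd(2,k)=1$ for odd $k$, the numbers $2(i-1)$ for $i\in[k]$ already exhaust $\mathbb{Z}_k$, so again every residue modulo $k$ is represented. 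Either way, for each $r\in\{0,1,\dots,k-1\}$ the graph $G$ contains a cycle of length congruent to $r$ modulo $k$. Because $k$ is odd, the residues modulo $k$ attained by even integers are all of $\mathbb{Z}_k$, so this is exactly the statement that $G$ contains cycles of all even lengths modulo $k$, giving Conjecture \ref{conj:Thom1}; and it immediately gives cycles of all lengths modulo $k$, so Conjecture \ref{conj:Thom2} holds for odd $k$ as well (for odd $k$ one does not even need the $2$-connectivity or non-bipartiteness hypotheses).

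I do not expect any real obstacle in this deduction: all the difficulty is packed into Theorem \ref{thm 1} and its corollary Theorem \ref{mainthm1}, which we are permitted to assume. The one subtlety to keep in mind is why the argument is restricted to odd $k$: when $k$ is even and the admissible cycles produced have common difference $d=2$ with $\ell$ odd, their lengths realize only the odd residues modulo $k$ and miss the even ones that Conjecture \ref{conj:Thom1} asks for, which is precisely the reason the even case cannot be run this way and was instead treated directly in \cite{LM}. So the final write-up will be short: cite \cite{LM} for even $k$, and for odd $k$ feed the cycles produced by Theorem \ref{mainthm1} into the two-line $\gcd$/parity argument above.
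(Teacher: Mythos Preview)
Your proposal is correct and matches the paper's approach exactly: cite \cite{LM} for even $k$, and for odd $k$ deduce both conjectures from Theorem~\ref{mainthm1} via the observation (already noted in \cite{LM}) that $k$ admissible cycles hit every residue class modulo an odd $k$. The only difference is that the paper leaves this odd-$k$ reduction as a citation to \cite{LM}, whereas you spell out the short $\gcd$ argument explicitly.
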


\noindent We would like to address that very recently,
Chiba and Yamashita \cite{CY19} independently proved Conjecture \ref{conj:Thom2}.
Also very recently, Lyngsie and Merker \cite{LM19} proved that for odd $k$,
every $3$-connected cubic graph of large order contains cycles of all lengths modulo $k$.

The case of cycles of length zero modulo $k$ has received considerable attention.
Thomassen \cite{Th88} gave a polynomial-time algorithm for finding a cycle of length zero modulo $k$ in any graph or a certificate that no
such cycle exists.
In 1988, Dean \cite{Dean} proposed the following conjecture.

\begin{conj}[Dean \cite{Dean}]\label{deanconj}
For any positive integer $k\geq 3$, every $k$-connected graph contains a cycle of length zero modulo $k$.
\end{conj}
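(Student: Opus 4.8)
My plan is to deduce Conjecture~\ref{deanconj} from Theorem~\ref{thm 1} together with its consequence Theorem~\ref{thm:twoThomassen}, arguing by induction on $|V(G)|$. If $\delta(G)\ge k+1$, then Theorem~\ref{thm:twoThomassen} (the assertion of Conjecture~\ref{conj:Thom1}) already gives a cycle of every even length modulo $k$, and $0$ is even, so we are done; hence I may assume $\delta(G)=k$ and fix $v\in V(G)$ with $\deg_G(v)=k$ and $N_G(v)=\{v_1,\dots,v_k\}$. If $|V(G)|=k+1$ then $G=K_{k+1}$ contains $C_k$, so assume $|V(G)|\ge k+2$. The central construction is the graph $H$ obtained from $G-v$ by adjoining two new vertices $u,u'$, each joined to all of $v_1,\dots,v_k$. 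Then $H$ is $2$-connected: it is connected, deleting any vertex $x\notin\{u,u'\}$ leaves $(G-v)-x$ connected (as $G-v$ is $(k-1)$-connected and $k\ge3$) with $u,u'$ still joined to $N_G(v)\setminus\{x\}\ne\emptyset$, and deleting $u$ or $u'$ is harmless; moreover $\deg_H(v_i)=\deg_G(v_i)+1\ge k+1$ for every $i$.

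Assume for the moment that $v$ is the \emph{only} vertex of $G$ of degree $k$. Then every vertex of $H$ other than $u,u'$ has degree at least $k+1$, so Theorem~\ref{thm 1} provides $k$ admissible $u$--$u'$ paths $P_1,\dots,P_k$ in $H$. Each $P_i$ meets $u$ through a single edge $uv_{a_i}$ and $u'$ through a single edge $u'v_{b_i}$, so removing $u,u'$ leaves a $v_{a_i}$--$v_{b_i}$ path in $G-v$ of length $|P_i|-2$, and re-attaching its ends to $v$ (adjacent to every $v_j$ in $G$) yields a cycle $C_i$ of $G$ with $|C_i|=|P_i|$, provided $v_{a_i}\ne v_{b_i}$, which holds whenever $|P_i|\ge3$. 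Write $|P_i|=a+(i-1)d$ with $d\in\{1,2\}$ and $a=|P_1|\ge2$. If $a=2$, then $C_2,\dots,C_k$ are cycles of lengths $2+d,2+2d,\dots,2+(k-1)d$; when $d=1$ these include $k$, and when $d=2$ the last of them is $2k\equiv0\pmod k$. If $a\ge3$, then $C_1,\dots,C_k$ have lengths $a,\dots,a+(k-1)d$, which contain a multiple of $k$ except possibly when $d=2$, $k$ is even and $a$ is odd. But $H$ is bipartite if and only if $G$ is, and when $H$ is bipartite the common neighbourhood $N_G(v)$ of $u$ and $u'$ is monochromatic, forcing $u$ and $u'$ into the same part, so every $u$--$u'$ path has even length and $a$ is even. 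Thus the exceptional case forces $G$ non-bipartite, and there I would rule it out by using an odd cycle of $H$ to reroute one $P_i$, shifting the whole progression to an even first term. This settles the conjecture when $v$ is the unique vertex of degree $k$.

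It remains to drop that assumption, and this is where I expect the real work to lie. A vertex $w\ne v$ of degree $k$ lying outside $N_G(v)$ keeps degree $k$ in $H$, so Theorem~\ref{thm 1} is not directly available for $H$. To handle this I would invoke the classical fact that, for $|V(G)|\ge k+2$, the graph $G^{\ast}:=(G-v)+K(N_G(v))$ (delete $v$, make $N_G(v)$ a clique) is again $k$-connected; as $|V(G^{\ast})|<|V(G)|$, the induction hypothesis gives a cycle $C^{\ast}$ of $G^{\ast}$ with $|C^{\ast}|\equiv0\pmod k$, chosen to use as few of the ``new'' edges (those inside $N_G(v)$ but not in $E(G)$) as possible. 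If $C^{\ast}$ uses none, it is a cycle of $G$ and we are done. The main obstacle is lifting $C^{\ast}$ when it does use new edges: each such edge $v_iv_j$ is a shortcut for the length-$2$ path $v_i v v_j$ in $G$, but $v$ can be reused only once, so one must show --- exploiting the minimality of $C^{\ast}$ and the fact that $N_G(v)$ induces a $K_k$ in $G^{\ast}$ --- that $C^{\ast}$ may be rerouted within that clique to a cycle of length $\equiv0\pmod k$ using at most one new edge, and then that the resulting $+1$ in length can be absorbed. Carrying out this surgery while controlling the length modulo $k$, together with the non-bipartite parity point in the previous paragraph, is the crux; all the path-finding is delegated to Theorem~\ref{thm 1}.
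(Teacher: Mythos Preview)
Your plan has two serious gaps that you yourself flag as ``the crux,'' and neither appears to be fixable along the lines you sketch.

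First, the non-bipartite parity case. When the admissible $u$--$u'$ paths in $H$ have common difference $2$, odd first term $a\ge 3$, and $k$ is even, the resulting cycles in $G$ all have odd length and none is divisible by $k$. You propose to ``reroute one $P_i$ through an odd cycle of $H$'' to shift parity, but Theorem~\ref{thm 1} gives you no handle on the parity of the paths it outputs, and an arbitrary odd cycle of $H$ need not meet any $P_i$ in a usable way. Nor can you fall back on Theorem~\ref{thm:twoThomassen}: both Conjectures~\ref{conj:Thom1} and~\ref{conj:Thom2} require minimum degree $k+1$, and here $\delta(G)=k$ (and $\delta(G-v)\ge k$, not $k+1$), so you cannot apply them to $G$ or to $G-v$.

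Second, and more seriously, the inductive lifting from $G^\ast=(G-v)+K(N_G(v))$. Even granting that $C^\ast$ can be rerouted to use at most one new clique edge (which you do not prove), replacing that single edge by the length-$2$ path through $v$ produces a cycle in $G$ of length $\lvert C^\ast\rvert+1\equiv 1\pmod k$. There is no mechanism in your setup to absorb this $+1$: you possess only the one cycle $C^\ast$, local surgery inside the clique changes lengths without modular control, and nothing in the induction hypothesis hands you a companion cycle one shorter. This is not a detail; it is the entire difficulty.

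The paper's proof is entirely different in architecture. It does not induct on $\lvert V(G)\rvert$; instead it runs a forbidden-subgraph case analysis, successively excluding $K_4^-$ (Lemma~\ref{usingK_4-}), then $K_3$ (Lemma~\ref{K_3noK_4-}), then splitting into non-bipartite triangle-free (Lemma~\ref{3connnb}), girth-four with no $K_{d,d}^-$ (Lemma~\ref{deanc3}), and bipartite $C_4$-free (Lemma~\ref{deanbi}). Each case invokes Theorem~\ref{thm 1} on carefully chosen $2$-connected pieces and combines the resulting admissible paths with explicit short paths coming from the structural configuration at hand; this is what supplies the modular control that your single global application of Theorem~\ref{thm 1} cannot. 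The case $k=5$ is genuinely harder and needs its own sequence of lemmas on theta graphs and $1$-subdivisions of $K_4$ (Lemmas~\ref{1subdivK4}--\ref{dean5}).
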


\noindent We point out that Conjecture \ref{deanconj} is tight, as for all odd $k$ and $n\geq k-1$,
the complete bipartite graph $K_{k-1, n}$ is $(k-1)$-connected but has no cycles of length zero modulo $k$.
The case $k=3$ in Conjecture \ref{deanconj} was proved by Chen and Saito \cite{CS}, and the case $k=4$ was solved by Dean, Lesniak and Saito \cite{DLS93}.
To the best of our knowledge, this conjecture remained open for any $k\geq 5$ prior to this paper.

Taking advantage of Theorem \ref{thm 1}, we are able to resolve Conjecture \ref{deanconj} completely.

\begin{theo}\label{deantheo}
Conjecture \ref{deanconj} holds for any positive integer $k\geq 3$.
\end{theo}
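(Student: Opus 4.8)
The plan is to deduce Dean's conjecture from Theorem~\ref{thm 1} by finding, inside any $k$-connected graph $G$, a suitable $2$-connected subgraph together with two vertices $x,y$ to which Theorem~\ref{thm 1} applies, and then selecting one of the $k$ admissible $x$--$y$ paths whose length has the right residue modulo $k$. The natural first step is to pick an edge $xy$ of $G$ and work in $B:=G$ itself (note $G$ is $2$-connected since $k\geq 3$). However, Theorem~\ref{thm 1} requires every vertex \emph{other than} $x,y$ to have degree at least $k+1$, whereas $k$-connectivity only guarantees minimum degree $\geq k$. So the first real task is to handle this off-by-one gap: I would argue that if $G$ has a vertex $v$ of degree exactly $k$, then $v$ together with its neighborhood behaves rigidly enough (its $k$ neighbors form a cutset unless $|V(G)|=k+1$) that one can either reduce to a smaller instance or locate a cycle through $v$ and all or most of $N(v)$ of the desired length directly. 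A cleaner route: choose $x,y$ to be the two endpoints of an edge incident to \emph{low-degree} vertices, so that the only vertices of degree $k$ are among $\{x,y\}$; if no such edge exists, all degree-$k$ vertices form an independent set and a separate (easier) argument handles them.

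Assuming we are in the main case, Theorem~\ref{thm 1} gives $k$ admissible $x$--$y$ paths $P_1,\dots,P_k$, that is, paths whose lengths $\ell_1,\ell_2,\dots,\ell_k$ form an arithmetic progression with common difference $d\in\{1,2\}$ and $\ell_1\geq 2$. If $d=1$, the $k$ lengths $\ell_1,\ell_1+1,\dots,\ell_1+(k-1)$ form a complete residue system modulo $k$, so some $\ell_i\equiv -1\pmod k$; then $P_i$ together with the edge $xy$ is a cycle of length $\ell_i+1\equiv 0\pmod k$, and since $\ell_i\geq 2$ this cycle is not a multi-edge. If $d=2$, the lengths $\ell_1,\ell_1+2,\dots,\ell_1+2(k-1)$ hit every residue modulo $k$ when $k$ is odd, so the same trick works; when $k$ is even they hit only half the residues, so this is where more care is needed — I would in that case instead apply Theorem~\ref{thm 1} to a cleverly chosen pair $x,y$ at \emph{odd distance} in a BFS layering, or exploit that $G$ is $2$-connected non-bipartite (if $G$ is bipartite, $k$-connected and bipartite with $k$ even, one checks directly using the bipartition that a zero-residue cycle exists, or invokes Conjecture~\ref{conj:Thom1}/Theorem~\ref{thm:twoThomassen}), and combine an odd admissible path with a short odd $x$--$y$ walk to adjust the parity.

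The main obstacle I expect is precisely the even-$k$, common-difference-$2$ scenario: admissible paths only guarantee an AP with difference $1$ \emph{or} $2$, and with difference $2$ we lose control of residues modulo an even $k$. Overcoming it requires either (i) forcing difference $1$ by additional structure, or (ii) packaging two applications of Theorem~\ref{thm 1} — one producing a long run of one parity, and a disjoint or rerouted segment flipping parity — to manufacture a length in the missing residue class; ensuring the two pieces combine into a genuine cycle (internally disjoint, length $\geq 3$) is the delicate bookkeeping. A secondary, more routine obstacle is the minimum-degree reduction from $k$ to $k+1$ described above, which should be dispatched by a short case analysis on vertices of degree exactly $k$ using $k$-connectivity (Menger's theorem to find many internally disjoint $x$--$y$ paths avoiding the troublesome vertex, or contracting/deleting to reach a base case handled by Chen–Saito and Dean–Lesniak–Saito for $k\in\{3,4\}$).
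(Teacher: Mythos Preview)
Your proposal has a genuine gap that is more serious than you anticipate. You call the minimum-degree reduction from $k$ to $k+1$ a ``secondary, more routine obstacle,'' but this is in fact the heart of the difficulty. A $k$-connected graph may be $k$-regular (take for instance the hypercube $Q_k$, or any vertex-transitive $k$-connected graph), in which case \emph{every} vertex has degree exactly $k$; there is then no edge $xy$ for which all remaining vertices have degree at least $k+1$, and the degree-$k$ vertices certainly do not form an independent set, so your proposed case split simply does not cover this situation. Even setting the case split aside, Theorem~\ref{thm 1} applied with minimum degree $k$ (rather than $k+1$) yields only $k-1$ admissible paths, and $k-1$ terms of an arithmetic progression need not hit the residue $-1$ modulo $k$; hence even your ``main case'' argument for common difference one does not close without the missing unit of degree. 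Your fallback of invoking Theorem~\ref{thm:twoThomassen} in the bipartite subcase runs into the same wall, since that theorem also requires minimum degree $k+1$.

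The paper's proof takes a completely different route. Rather than trying to squeeze one extra unit of minimum degree out of $k$-connectivity globally, it performs a structural case analysis on small configurations in $G$: if $G$ contains a $K_4^-$ (Lemma~\ref{usingK_4-}), a maximal-clique argument manufactures $d$ consecutive cycle lengths; if $G$ has a triangle but no $K_4^-$ (Lemma~\ref{K_3noK_4-}), contracting an edge of the triangle recovers the needed degree; triangle-free non-bipartite graphs are handled via a non-separating odd cycle (Lemma~\ref{3connnb}); bipartite graphs of girth four without a $K_{d,d}^-$ (Lemma~\ref{deanc3}) and bipartite graphs of girth at least six (Lemma~\ref{deanbi}) are treated separately; and the residual case $k=5$ with girth at least five requires its own argument via minimum theta subgraphs and $1$-subdivisions of $K_4$ (Lemmas~\ref{1subdivK4}--\ref{dean5}). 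In each branch the specific local structure is what buys back the missing degree or supplies several extra path lengths to compensate. This is precisely why the paper singles out $k=5$ as the hardest case --- an \emph{odd} value where your heuristic would have predicted no difficulty at all.
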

\noindent It turns out that the case $k=5$ is the most difficult case for our approach.
We would like to point out that for $k\geq 6$, in many cases in fact we are able to find $k$ admissible cycles.
In particular, our proofs show that when $k\geq 6$, any $k$-connected graph contains cycles of all even lengths modulo $k$,
except for the residue class $2$ modulo $k$ (see Theorem \ref{thm:Deanbigd} for the precise statement).\footnote{To see the tightness, note that both of $K_{k+1}$ (for even and odd $k$) and $K_{k,n}$ (for odd $k$ and $n\geq k$) are $k$-connected and contain cycles of all lengths $2t$ modulo $k$, except for cycles of lengths in the residue class $2$ modulo $k$.}

\subsection{Consecutive cycle lengths and chromatic number}
There has been extensive research on the relation between the chromatic number and cycle lengths.
For a graph $G$, let $L_e(G)$ and $L_o(G)$ be the set of even and odd cycle lengths in $G$, respectively.
Bollob\'as and Erd\H{o}s conjectured and Gyarf\'as \cite{Gyarfas} proved that $\chi(G)\leq 2|L_o(G)|+2$ for any graph $G$.
Mihok and Schiermeyer \cite{MS04} proved an analog for even cycles that $\chi(G)\leq 2|L_e(G)|+3$ for any graph $G$.
A strengthening of the above result was obtained in \cite{LM},
where the number of even cycle lengths $|L_e(G)|$ was replaced by the longest sequence of consecutive even cycle lengths in $G$.
Confirming a conjecture of Erd\H{o}s \cite{Erd92}, Kostochka, Sudakov and Verstra\"ete \cite{KSV} proved that
every triangle-free graph $G$ with $\chi(G)=k$ contains at least $\Omega(k^2\log k)$ cycles of consecutive lengths.

For $k\geq 2$, let $\chi_k$ be the largest chromatic number of a graph which does not contain $k$ cycles of consecutive lengths.
The complete graph $K_{k+1}$ shows that $\chi_k\geq k+1$.
In \cite{SV17}, Sudakov and Verstra\"ete conjectured that the chromatic number of a graph can be bounded by the longest sequence of consecutive cycle lengths from above.

\begin{conj}[Sudakov and Verstra\"ete \cite{SV17}]\label{SV}
For every integer $k\geq 2$, $\chi_k=k+1$.
\end{conj}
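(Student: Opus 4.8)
The plan is to prove the nontrivial inequality $\chi_k \le k+1$, which, rephrased, asserts that every graph with chromatic number at least $k+2$ contains $k$ cycles of consecutive lengths; this is exactly statement (4) in the abstract. First I would reduce to a connected graph $G$ that is \emph{vertex-critical} with $\chi(G) \ge k+2$, so that every vertex of $G$ has degree at least $\chi(G)-1 \ge k+1$. If $G$ happens to be bipartite this is vacuous, so assume $\chi(G)\ge k+2 \ge 4$, which forces $G$ to be non-bipartite. The next reduction is to a highly connected piece: either $G$ is already $3$-connected, or we pass to a suitable fragment across a small cutset. The clean route is to invoke the fact that every graph with chromatic number at least $t$ contains a $3$-connected (or $t$-critical, hence high-min-degree) subgraph with chromatic number at least $t$ — more precisely, one shows that a $\chi$-critical graph of chromatic number $\ge 4$ is $2$-connected, and then handles $2$-cuts by a standard argument: a minimal counterexample cannot have a $2$-cut, since one could $2$-color one side and merge, contradicting criticality; so the critical graph is in fact $3$-connected. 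Combined with minimum degree $\ge k+1$ and non-bipartiteness, Theorem \ref{nonbi 3} applies directly and yields $k$ cycles of consecutive lengths in $G$, hence in the original graph.

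The key steps, in order, are: (i) take a vertex-critical subgraph $G$ with $\chi(G) = \chi_k+1 \ge k+2$ and observe $\delta(G) \ge k+1$ and $G$ non-bipartite; (ii) show $G$ is $2$-connected (critical graphs of chromatic number $\ge 3$ have no cut-vertex — otherwise recolor blocks to agree on the cut-vertex); (iii) show $G$ is $3$-connected, by arguing that a $2$-separation $(A,B)$ with $|A\cap B| = 2$, say $A \cap B = \{u,v\}$, could be resolved: color $G[A]$ and $G[B]$ separately, and if the two colorings of $\{u,v\}$ disagree (the only obstruction), add the edge $uv$ to one side, use criticality to bound $\chi$ of the augmented piece, and splice — the standard block/separation recoloring argument shows $\chi(G) \le \max(\chi(G[A]+uv), \chi(G[B]+uv)) < \chi(G)$ unless one augmented side has chromatic number $\chi(G)$, contradicting criticality of $G$; (iv) apply Theorem \ref{nonbi 3} to the $3$-connected non-bipartite graph $G$ with $\delta(G) \ge k+1$ to obtain $k$ cycles of consecutive lengths. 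Finally, the matching lower bound $\chi_k \ge k+1$ is witnessed by $K_{k+1}$, whose only cycle lengths are $3, 4, \ldots, k+1$ — a block of only $k-1$ consecutive lengths — so $K_{k+1}$ has no $k$ cycles of consecutive lengths while $\chi(K_{k+1}) = k+1$; this gives $\chi_k = k+1$.

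The main obstacle is step (iii), the reduction to $3$-connectivity. The subtlety is that a $\chi$-critical graph need not itself be $3$-connected — one must either carry out the separation-recoloring argument carefully (tracking that adding the connecting edge $uv$ keeps us inside the critical-graph regime and does not blow up the chromatic number), or instead work with a minimal counterexample to the statement "every graph with $\chi \ge k+2$ has $k$ consecutive cycle lengths" and show such a minimal counterexample is $3$-connected directly: if it had a $2$-cut $\{u,v\}$, one of the two augmented sides $G[A]+uv$ or $G[B]+uv$ has chromatic number $\ge k+2$ (since $\chi(G) \le \max$ of the two when they agree on $\{u,v\}$, and the $+uv$ trick forces agreement), and is smaller than $G$, so by minimality it contains $k$ consecutive cycle lengths — but every cycle in $G[A]+uv$ either lies in $G$ or uses the edge $uv$, and in the latter case $uv$ can be replaced by a $u$–$v$ path through $B$; however this can change lengths, so one needs the stronger Theorem \ref{thm 1} (admissible \emph{paths} between two fixed endpoints) on the other side to rebuild the cycles inside $G$. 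I expect the cleanest writeup routes everything through Theorem \ref{nonbi 3} after establishing $3$-connectivity of a critical subgraph, using the classical fact (essentially Dirac's theorem on critical graphs) that $k$-critical graphs with $k\ge 4$ have no separating set of size $\le 2$ after the standard edge-addition normalization.
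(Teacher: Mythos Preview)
Your reduction to a $(k+2)$-critical subgraph with minimum degree $\ge k+1$ is correct, as is the observation that it is $2$-connected. The gap is step (iii): $k$-critical graphs need \emph{not} be $3$-connected, even for $k \ge 4$. Dirac's structure theorem says precisely the opposite of what you recall --- it describes how a $k$-critical graph \emph{can} decompose across a $2$-cut $\{u,v\}$ (necessarily with $uv \notin E(G)$) into pieces from which one recovers smaller $k$-critical graphs; running that construction in reverse produces $k$-critical graphs with $2$-cuts for every $k \ge 4$. Your recoloring sketch breaks because criticality of $G$ gives no control over $\chi(G[A]+uv)$: adding the edge $uv$ can push the chromatic number of a side back up to $k+2$, so you never reach a contradiction. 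Your alternative minimal-counterexample route also fails for the reason you yourself flag: among the $k$ consecutive cycles found in $G[A]+uv$, some may use $uv$ and some may not, and rerouting only the former through a $u$--$v$ path in $B$ shifts their lengths by a fixed amount while leaving the others alone, destroying consecutiveness.

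The paper avoids $3$-connectivity altogether. It invokes a result of Krusenstjerna-Hafstr{\o}m and Toft that every $t$-critical graph with $t \ge 4$ contains a \emph{non-separating induced odd cycle}, and then applies Theorem~\ref{nonseparating cycle->consecutive cycles} (a $2$-connected graph with such a cycle and minimum degree $\ge k+1$ has $k$ cycles of consecutive lengths). Note that Theorem~\ref{nonbi 3} is itself derived from Theorem~\ref{nonseparating cycle->consecutive cycles} via Lemma~\ref{lem:3-con->non-separating}, which uses $3$-connectivity only to produce the non-separating induced odd cycle; for critical graphs that cycle comes for free from the Krusenstjerna-Hafstr{\o}m--Toft result, so $2$-connectivity suffices.
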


Using Theorem \ref{thm 1}, we are able to prove Conjecture~\ref{SV}.

\begin{theo}\label{chroconcy}
Conjecture~\ref{SV} holds for every integer $k\geq 2$.
\end{theo}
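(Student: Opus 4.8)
The plan is to show that any graph $G$ with $\chi(G)\geq k+2$ contains $k$ cycles of consecutive lengths, which combined with the lower bound $\chi_k\geq k+1$ witnessed by $K_{k+1}$ gives $\chi_k=k+1$. First I would pass to a vertex-minimal subgraph $G'\subseteq G$ with $\chi(G')\geq k+2$; minimality forces $\delta(G')\geq k+1$, since deleting a vertex of degree at most $k$ cannot drop the chromatic number below $k+2$. Moreover, a minimal such graph is $2$-connected: if it had a cut-vertex, one of the blocks would already need chromatic number $\geq k+2$, contradicting minimality. So we may assume $G$ itself is $2$-connected with $\delta(G)\geq k+1$.

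If $G$ is non-bipartite, then we are essentially done: a $2$-connected non-bipartite graph with minimum degree at least $k+1$ contains cycles of all lengths modulo $k$ by Theorem \ref{thm:twoThomassen} (Conjecture \ref{conj:Thom2}), and in fact the stronger Theorem \ref{mainthm1} already gives $k$ admissible cycles $P_1,\dots,P_k$; since admissible cycles have lengths forming an arithmetic progression with common difference one or two, the non-bipartite case actually needs a little more care — admissible cycles need not have \emph{consecutive} lengths when the common difference is two. The right tool here is Theorem \ref{nonbi 3}: if in addition $G$ were $3$-connected we would be done. So the real work is to handle $2$-connected (but not necessarily $3$-connected) non-bipartite graphs, and bipartite graphs, directly via Theorem \ref{thm 1}.

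The key step is the following reduction. Since $\chi(G)\geq k+2$, the graph $G$ is not $(k+1)$-colorable, so it is not $(k+1)$-degenerate in the sense that every nonempty subgraph has a vertex of degree $\leq k$; hence there is a subgraph $H$ with $\delta(H)\geq k+1$, and we already arranged $\delta(G)\geq k+1$ with $G$ being $2$-connected. Now pick any edge $xy$ of $G$; every vertex of $G$ other than $x,y$ has degree at least $k+1$, and $G$ is $2$-connected, so Theorem \ref{thm 1} yields $k$ admissible paths $Q_1,\dots,Q_k$ from $x$ to $y$. Adding back the edge $xy$ to each $Q_i$ gives $k$ cycles whose lengths form an arithmetic progression with common difference one or two. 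To upgrade "common difference one or two" to "consecutive" when $\chi(G)\geq k+2$, I would exploit that a graph with chromatic number $\geq k+2$ cannot have all its cycles of one parity in any residue window — concretely, use non-bipartiteness of $G$ together with Theorem \ref{thm:twoThomassen}: a $2$-connected non-bipartite graph with $\delta\geq k+1$ has cycles of all lengths modulo $k$, and then a short interpolation / pigeonhole argument over the lengths produced by Theorem \ref{thm 1} (applied to well-chosen pairs $x,y$, as in the deduction of Theorem \ref{mainthm1}) splices an odd cycle into the progression to fill the gaps, producing $k$ consecutive lengths. (If $G$ happens to be bipartite, then $\chi(G)=2\leq k+1$, so this case is vacuous.)

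The main obstacle I anticipate is precisely this last splicing step: Theorem \ref{thm 1} delivers paths whose common difference might be two, whereas "$k$ cycles of consecutive lengths" demands difference exactly one. Bridging this requires using the chromatic number hypothesis in an essential way (not just the minimum degree it implies) — for instance, by a theorem-\ref{nonbi 3}-style argument that locates enough structure (a subdivided theta or two internally disjoint $x$–$y$ paths of opposite parity) to convert the difference-two progression into a difference-one progression. I expect the clean route is to combine Theorem \ref{thm 1} with Theorem \ref{nonbi 3}: reduce to the $3$-connected case by standard block/contraction arguments if possible, and otherwise handle low-connectivity cuts by induction on $|V(G)|$, gluing the progressions obtained on the pieces. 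Verifying that the gluing does not lose the consecutiveness, and that the minimality/connectivity reductions are compatible, will be the technical heart of the argument.
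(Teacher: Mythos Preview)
Your reduction to a $(k+2)$-critical (hence $2$-connected, non-bipartite, minimum degree $\geq k+1$) subgraph is fine, and you have correctly located the real difficulty: Theorem~\ref{thm 1} only yields $k$ \emph{admissible} cycles, and you must force the common difference to be one. But the devices you propose for this step do not work. Theorem~\ref{thm:twoThomassen} gives cycles of all residues modulo $k$, which says nothing about consecutive integers; a ``pigeonhole / interpolation'' argument cannot upgrade a length-$k$ arithmetic progression of step two to one of step one. Reducing to the $3$-connected case is also not available: a $(k+2)$-critical graph is $2$-connected but need not be $3$-connected, and since it is already $2$-connected there is no block decomposition to exploit; contracting $2$-cuts does not preserve the chromatic hypothesis. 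The ``splice an odd cycle'' idea is the right instinct but, as stated, is not an argument---you would need the odd cycle to interact with the paths from Theorem~\ref{thm 1} in a controlled way, and nothing you have said forces that.

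The missing idea is structural: what you need from criticality (beyond $\delta$ and $2$-connectivity) is a \emph{non-separating induced odd cycle}. The paper invokes a result of Krusenstjerna-Hafstr{\o}m and Toft that every $t$-critical graph with $t\geq 4$ contains such a cycle, and then applies Theorem~\ref{nonseparating cycle->consecutive cycles}, which says that any $2$-connected graph of minimum degree $\geq k+1$ containing a non-separating induced odd cycle has $k$ cycles of consecutive lengths. The point of the non-separating odd cycle $C$ is exactly the ``parity splice'' you were groping for: two vertices on $C$ are joined by two paths in $C$ whose lengths differ by one, while $G-V(C)$ remains connected and (via Lemma~\ref{lem:better non-separating cycle}) retains enough degree to run Theorem~\ref{thm 1}; combining these gives consecutive lengths. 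Without isolating this structure, your outline does not close.
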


The rest of the paper is organized as follows.
In Section \ref{sec:prelim}, we define the notations and include some preliminaries.
In Section \ref{sec:admiss_path}, we prove Theorem \ref{thm 1}.
In Section \ref{sec:consecutive_cycles}, we prove Theorems \ref{nonbi 3} and \ref{chroconcy} by a unified approach via Theorem \ref{thm 1}.
In Section \ref{sec:Dean_proof}, we prove Theorem \ref{deantheo} by extensively applying Theorem \ref{thm 1} as well.

\section{Preliminaries} \label{sec:prelim}
All graphs in this paper are finite, undirected, and simple.
Let $H$ be a subgraph of a graph $G$.
We say that $H$ and a vertex $v\in V(G)-V(H)$ are {\it adjacent} in $G$ if $v$ is adjacent in $G$ to some vertex in $V(H)$.
Let $N_G(H):=\bigcup_{v\in V(H)}N_G(v)-V(H)$ and $N_G[H]:=N_G(H)\cup V(H)$.
For a subset $S$ of $V(G)$, $G[S]$ denotes the subgraph induced by $S$ in $G$, and $G-S$ denotes the subgraph $G[V(G)-S]$; we say that a vertex $v$ and $S$ are {\it adjacent} in $G$ if $v$ is adjacent in $G$ to some vertex in $S$.
For two distinct vertices $x,y$ of $G$, we define $G+xy$ to be the graph with $V(G+xy)=V(G)$ and $E(G+xy)=E(G) \cup \{xy\}$.
A {\it clique} in $G$ is a subset of $V(G)$ whose vertices are pairwise adjacent in $G$.
A vertex is a {\it leaf} in $G$ if it has degree one in $G$.
We say that a path $P$ is {\it internally disjoint} from $H$ if no vertex of $P$ other than its endpoints is in $V(H)$.
For a positive integer $k$, we write $[k]$ for the set $\{1,2,...,k\}$.

For a graph $G$ and a subset $S$ of $V(G)$, we say that a graph $G'$ {\it is obtained from $G$ by contracting $S$} into a vertex $s$, if $V(G')=(V(G)-S)\cup\{s\}$ and $E(G')=E(G-S)\cup\{vs: v\in V(G)-S$ is adjacent to $S$ in $G\}$.

A vertex $v$ of a graph $G$ is a {\em cut-vertex} of $G$ if $G-v$ contains more components than $G$.
A {\em block} $B$ in $G$ is a maximal connected subgraph of $G$ such that there exists no cut-vertex of $B$.
So a block is an isolated vertex, an edge or a $2$-connected graph.
An {\em end-block} in $G$ is a block in $G$ containing at most one cut-vertex of $G$.
If $D$ is an end-block of $G$ and a vertex $x$ is the only cut-vertex of $G$ with $x \in V(D)$, then we say that $D$ is an {\em end-block with cut-vertex $x$}.
Let ${\mathcal B}(G)$ be the set of blocks in $G$ and ${\mathcal C}(G)$ be the set of cut-vertices of $G$. The {\em block structure} of $G$ is the bipartite graph with bipartition $({\mathcal B}(G),{\mathcal C}(G))$, where $x\in {\mathcal C}(G)$ is adjacent to $B\in {\mathcal B}(G)$ if and only if $x\in V(B)$. Note that the block structure of any graph $G$ is a forest, and it is connected if and only if $G$ is connected.
For notations not defined here, we refer readers to \cite{LM}.

The next result can be derived from a special case of \cite[Theorem 2.5]{Fan02}. 

\begin{theo}\label{Thm:Fan}
Let $G$ be a $2$-connected graph and let $x,y$ be distinct vertices of $G$.
If every vertex in $G$ other than $x$ and $y$ has degree at least $3$,
then there are two admissible paths from $x$ to $y$ in $G$.
\end{theo}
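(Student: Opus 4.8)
The plan is to prove Theorem~\ref{Thm:Fan} directly, by induction on $|V(G)|+|E(G)|$, by examining a minimal counterexample: a $2$-connected graph $G$ with distinct vertices $x,y$, every other vertex of degree at least $3$, and no two admissible paths from $x$ to $y$. The first move is the obvious edge-deletion reduction. If some edge $e=uv$ with $u,v\notin\{x,y\}$ can be deleted so that $G-e$ is still $2$-connected and still has every vertex outside $\{x,y\}$ of degree at least $3$, then two admissible $x$-$y$ paths of $G-e$ would work for $G$; so $G$ has no such edge, which means every edge not meeting $\{x,y\}$ is either essential for $2$-connectivity or incident with a degree-$3$ vertex. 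This forces $G$, away from $x$ and $y$, to be a highly restricted, essentially edge-critical, near-cubic $2$-connected graph. If moreover $xy\in E(G)$, one either recurses on $G-xy$ or reads off that the blocks of $G-xy$ form a single path from $x$ to $y$, which pins down the global shape; and if $x$ or $y$ has degree at least $3$ one exploits the extra incident edge by a separate argument, so that the interesting case is $\deg(x)=\deg(y)=2$.

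With $G$ so reduced, the core of the argument is to take a longest path $P=v_0v_1\cdots v_\ell$ from $x=v_0$ to $y=v_\ell$ (so $\ell\ge 2$) and perturb it. Every interior vertex $v_i$ ($0<i<\ell$) has degree at least $3$, and by $2$-connectivity the part of $G$ off $P$ attaches back onto the interior of $P$; from this one extracts a family of $x$-$y$ paths, each obtained from $P$ by rerouting along a single chord or a short ear hung on $\{v_1,\dots,v_{\ell-1}\}$. Maximality of $P$ caps their lengths from above, while choosing each chord or ear as short as possible caps them from below. Under the hypothesis that no two $x$-$y$ paths have lengths differing by $1$ or $2$, the set of realizable path lengths must then have all consecutive gaps at least $3$; this is rigid enough that one further $x$-$y$ path --- produced by composing two of the reroutes, or by calling on a third edge at one of the degree-$3$ interior vertices --- lands at an intermediate length and delivers the contradiction.

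The hard part, exactly as in the original Bondy--Vince theorem and in Fan's \cite[Theorem~2.5]{Fan02}, from which Theorem~\ref{Thm:Fan} is here quoted, is precisely this gap control: an uncontrolled reroute through an ear of length $q$ only shifts a path length by roughly $q-1$, which is useless unless $q$ is small, so the real content is to choose the chords and ears minimally and to balance several of them against one another, all while managing the asymmetric and possibly degree-$2$ roles of $x$ and $y$, which, unlike the interior vertices, cannot be assumed to carry spare edges. Since Theorem~\ref{Thm:Fan} is only a special case of \cite[Theorem~2.5]{Fan02}, the most economical route is simply to cite that result; the sketch above is meant to convey the shape of the analysis it packages.
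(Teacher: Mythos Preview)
The paper does not give its own proof of Theorem~\ref{Thm:Fan}; it simply records the result as a special case of \cite[Theorem~2.5]{Fan02} and moves on. Your proposal ultimately does the same thing---you explicitly conclude that ``the most economical route is simply to cite that result''---so at the level of what is actually established, your treatment and the paper's coincide.

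The additional sketch you offer (minimal counterexample, edge-deletion reduction, longest-path plus chord/ear rerouting, gap control) is a reasonable outline of the Bondy--Vince/Fan style of argument, and you are honest that the real work lies in the gap-control step, which you do not carry out. That is fine as exposition, but be aware that the sketch as written is not a self-contained proof: the claimed reduction to $\deg(x)=\deg(y)=2$ is not justified, and the vague ``one further $x$-$y$ path \dots\ lands at an intermediate length'' is precisely where Fan's argument does nontrivial case analysis. Since you are citing \cite{Fan02} anyway, none of this is a problem; just do not present the sketch as more than motivation.
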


\section{Admissible paths}\label{sec:admiss_path}
In this section, we prove Theorem \ref{thm 1}.
We say that $(G,x,y)$ is a {\it rooted graph} if $G$ is a graph and $x,y$ are two distinct vertices of $G$.
The {\it minimum degree} of a rooted graph $(G,x,y)$ is $\min \{d_G(v):v\in V(G)-\{x,y\}\}$.
We also say that a rooted graph $(G,x,y)$ is {\it $2$-connected} if $G+xy$ is $2$-connected.
Theorem \ref{thm 1} is an immediate corollary of the following theorem.

\begin{theo}\label{mainthm}
Let $k$ be a positive integer.
If $(G,x,y)$ is a $2$-connected rooted graph with minimum degree at least $k+1$, then there exist $k$ admissible paths from $x$ to $y$ in $G$.
\end{theo}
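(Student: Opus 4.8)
The plan is to argue by induction on $k$. The base cases $k=1$ and $k=2$ are immediate: a $2$-connected rooted graph contains an $x$-$y$ path of length at least $2$, which handles $k=1$, while Theorem~\ref{Thm:Fan} handles $k=2$ since then $k+1=3$. So we fix $k\ge 3$, assume the theorem for $k-1$, and suppose for contradiction that it fails for $k$; among all failing instances we choose a rooted graph $(G,x,y)$ with $|V(G)|+|E(G)|$ as small as possible.

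The first block of work is to extract structure from this minimal counterexample. Minimality with respect to edges should force every edge not incident with $x$ or $y$ to be essential: its removal must either drop some vertex below degree $k+1$ or destroy $2$-connectivity of $G+xy$. Combined with the classical fact that a $2$-connected graph of minimum degree at least $3$ always contains a removable edge unless it is one of a few small exceptional graphs, this pins the degree of almost every vertex of $V(G)\setminus\{x,y\}$ to exactly $k+1$ and severely restricts the local structure, while the genuinely small cases --- where $G$ is close to $K_{k+2}$ or to a complete bipartite graph --- can be dispatched by hand. The second, and central, reduction is to rule out contractible configurations: a proper connected induced subgraph $H$, attaching to the rest of $G$ through a bounded set of vertices, such that contracting $H$ (in the sense of Section~\ref{sec:prelim}) leaves a strictly smaller $2$-connected rooted graph of minimum degree at least $k+1$. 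If such an $H$ existed, minimality would give $k$ admissible $x$-$y$ paths in the contracted graph; a path avoiding the contracted vertex already lies in $G$, and a path through it enters and leaves $H$ at two of the attachment vertices, so it can be re-routed inside $H$ provided $H$ supplies a path of the right length between those two attachments --- and the slack to absorb a small length or parity shift is exactly what the notion of admissibility is designed to tolerate. Hence $G$ can contain no such configuration.

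The heart of the proof is then to show that a $2$-connected rooted graph of minimum degree at least $k+1$ admitting none of the above reductions cannot exist. Here I would take an extremal $x$-$y$ path $P$ in $G$, say one of maximum length, and study its bridges (the components of $G-V(P)$ together with their attachments) and its chords. Each bridge attaches to $P$ in at least two vertices by $2$-connectivity of $G+xy$, and every internal vertex of $P$ has degree at least $k+1$, so $P$ carries a dense supply of detours: replacing a segment of $P$ between two attachment points by a passage through a bridge or along a chord changes the length of the path by a controlled amount. The goal is to combine about $k$ such detours so as to realize lengths forming an arithmetic progression of length $k$ with common difference $1$ or $2$; when this fails, the failure should localize a small subgraph that is a contractible configuration --- or else force $G$ to be one of the tractable small graphs --- and in either case we reach a contradiction. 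At one point I expect to invoke the case $k-1$ of the theorem (or Theorem~\ref{Thm:Fan}) applied to a fragment of $G$ and to glue the resulting short admissible paths onto $P$, which is one reason the induction on $k$ is convenient.

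The main obstacle I anticipate is the tension between the common-difference-$1$ and common-difference-$2$ regimes. In a near-bipartite part of $G$ all $x$-$y$ paths share a fixed parity, so only a difference-$2$ progression is available there, and producing a difference-$1$ progression requires locating an odd cycle to switch parities --- or, in the worst case, retreating to a bipartite subgraph and invoking the Liu and Ma bipartite result (or its method) more or less as a black box. Keeping this parity bookkeeping consistent while simultaneously maintaining $2$-connectivity of the \emph{rooted} graph throughout the deletions and contractions --- delicate precisely because $x$ and $y$ are exempt from the degree hypothesis, so the usual removable-edge and contraction moves can fail in their vicinity --- is where the bulk of the technical effort should go, and is presumably why the authors single out Theorem~\ref{mainthm} as the engine of the paper.
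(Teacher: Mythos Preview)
Your proposal is not a proof but a loose strategic outline, and the strategy itself diverges sharply from what actually works. The phrases ``should force'', ``I expect to invoke'', ``presumably'' are accurate self-assessments: none of the three blocks you describe is carried out. In particular, the extremal-path-plus-bridges idea at the heart of your plan is never made concrete --- ``combine about $k$ such detours so as to realize lengths forming an arithmetic progression'' is the whole problem, not a step toward solving it --- and there is no mechanism offered for why failure of that combination would localize to a ``contractible configuration''. The removable-edge claim is also unsupported: there is no off-the-shelf theorem saying that a $2$-connected graph of minimum degree $k+1$ with no removable edge must be close to $K_{k+2}$ or complete bipartite.

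The paper's proof is structurally very different. It takes a minimal counterexample on $|V(G)|+|E(G)|$ (not induction on $k$) and, after disposing of $k\le 2$ via Theorem~\ref{Thm:Fan}, proves that $G$ contains no triangle through $x$ (Lemma~\ref{case1}), then that $G-y$ has a $4$-cycle through $x$ (Lemma~\ref{case2}), and from this builds a maximal induced complete bipartite subgraph $Q$ with $x$ in one part (Lemma~\ref{casebi}). This $Q$ is the engine: it supplies many paths from $x$ of controlled lengths, and the rest of the proof is a detailed case analysis on the component $C$ of $G-V(Q)$ containing $y$, repeatedly contracting pieces of $G$ around $Q$ to obtain smaller rooted graphs, applying minimality there, and gluing the resulting admissible paths back via the concatenation Lemma~\ref{lemconcatenat}. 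None of this --- the no-triangle lemma, the forced $4$-cycle, the bipartite hub $Q$ --- appears in your plan, and your extremal path $P$ plays no role in the actual argument. If you want to pursue your route you would need, at minimum, a precise statement and proof of what ``contractible configuration'' means and why its absence forces the detour-combination to succeed; as written, that is the entire content of the theorem.
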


The rest of this section is devoted to a proof of Theorem \ref{mainthm}. 
We need the following lemma.

\begin{lem}\label{lemconcatenat}
Let $(H,u,v)$ be a rooted graph and $W$ be a subset of $V(H)$.
Let $s$ be a positive integer.
Assume that there exist $s$ admissible paths $P_1,...,P_s$, where $P_i$ is from $u$ to some $w_i\in W$ for each $i\in [s]$.
Assume that for each $i\in [s]$, $H-V(P_i-w_i)$ contains $t$ paths $R^{i}_1,..., R^{i}_t$ from $w_i$ to $v$
such that their lengths form an arithmetic progression with common difference one or two\footnote{Here, we allow that some path $R^i_j$ has length one.}.
If $|R^{1}_j|=\cdots=|R^{s}_j|$ for every $j\in [t]$, then there exist $s+t-1$ admissible paths in $H$ from $u$ to $v$.
\end{lem}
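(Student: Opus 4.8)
The plan is to build the desired paths by concatenation and then reduce to an elementary fact about arithmetic progressions. For $i\in[s]$ and $j\in[t]$, set $Q_{ij}:=P_i\cup R^i_j$. Since $R^i_j$ is a path from $w_i$ to $v$ lying in $H-V(P_i-w_i)$, it passes through $w_i$ and avoids every other vertex of $P_i$; hence $P_i$ and $R^i_j$ meet in $w_i$ only and share no edge, so $Q_{ij}$ is a path from $u$ to $v$ in $H$ of length $|P_i|+|R^i_j|$. By the admissibility of $P_1,\dots,P_s$ we may write $|P_i|=|P_1|+(i-1)d$ with $|P_1|\ge 2$ and $d\in\{1,2\}$; and since $|R^1_j|=\dots=|R^s_j|=:r_j$ and these lengths form an arithmetic progression with common difference one or two, we may write $r_j=r_1+(j-1)d'$ with $r_1\ge1$ and $d'\in\{1,2\}$. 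Hence
\[
|Q_{ij}|=c+(i-1)d+(j-1)d',\qquad\text{where}\quad c:=|P_1|+r_1\ge 3 .
\]
It therefore suffices to select, from the ``grid'' of realizable lengths $\{\,c+(i-1)d+(j-1)d':i\in[s],\ j\in[t]\,\}$, a set of $s+t-1$ values forming an arithmetic progression with common difference $1$ or $2$: the corresponding paths $Q_{ij}$ are then $s+t-1$ admissible paths from $u$ to $v$, since their lengths are distinct and the smallest is at least $c\ge3\ge2$.

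First I would handle the degenerate cases. If $s=1$, then $Q_{1,1},\dots,Q_{1,t}$ have lengths $c,c+d',\dots,c+(t-1)d'$, which is already an arithmetic progression of length $t=s+t-1$ with common difference $d'\in\{1,2\}$ (and a single path of length $c\ge2$ when $t=1$); the case $t=1$ is symmetric. So assume henceforth that $s,t\ge2$, and split according to whether $d=d'$. If $d=d'$, then $|Q_{ij}|=c+(i+j-2)d$ depends only on $i+j$; as $m$ runs over $\{2,3,\dots,s+t\}$ one can always choose $i\in[s]$ and $j\in[t]$ with $i+j=m$ (for instance $i=\min(s,m-1)$ and $j=m-i$), and the $s+t-1$ resulting paths, one for each $m$, have lengths $c,c+d,\dots,c+(s+t-2)d$, as desired.

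The remaining case is $\{d,d'\}=\{1,2\}$ with $s,t\ge2$, where I would aim for common difference $1$. Say $d=1$ and $d'=2$ (the case $d=2$, $d'=1$ being identical after exchanging the two index directions). The point is that every integer $\ell$ with $0\le\ell\le s+t-2$ can be written as $(i-1)+2(j-1)$ with $i\in[s]$ and $j\in[t]$: take $j-1:=\min(\lfloor\ell/2\rfloor,\,t-1)$ and $i-1:=\ell-2(j-1)$, and verify the bounds $0\le i-1\le s-1$ (in the case $j-1=\lfloor\ell/2\rfloor$ one uses $s\ge2$, and in the case $j-1=t-1$ one uses $\ell\ge 2(t-1)$ together with $\ell\le s+t-2$). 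Consequently each of the lengths $c,c+1,\dots,c+(s+t-2)$ is realized by some $Q_{ij}$, and these paths are the desired $s+t-1$ admissible paths from $u$ to $v$.

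I expect the only genuine content to be this last extraction step: although the realizable lengths form a full two-dimensional grid, one must still argue that it always contains an arithmetic progression of length $s+t-1$ with common difference $1$ or $2$. The delicate point is the behaviour at the boundary of the grid---when $s$ or $t$ is small, or when $d\ne d'$, the naive diagonal selection $i+j=\mathrm{const}$ does not yield an arithmetic progression, so instead one has to check that a suitable contiguous block of integers (or a block with step $2$) is entirely covered.
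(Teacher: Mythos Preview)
Your proposal is correct and follows essentially the same approach as the paper: concatenate $P_i$ with $R^i_j$ and reduce to the elementary fact that if $A$ and $B$ are arithmetic progressions with common difference one or two, then $A+B$ contains an arithmetic progression of length $|A|+|B|-1$ with common difference one or two. The paper simply asserts this sumset fact in one sentence, whereas you have written out the case analysis ($d=d'$ versus $\{d,d'\}=\{1,2\}$, and the small cases $s=1$ or $t=1$) explicitly; the content is the same.
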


\begin{proof}
If each of $A$ and $B$ is an arithmetic progression with common difference one or two, then $A+B=\{a+b: a\in A, b\in B\}$ also forms an arithmetic progression with common difference one or two of size at least $|A|+|B|-1$.
So the set $\{P_i \cup R_j^i: i \in [s], j\in [t]\}$ contains $s+t-1$ admissible paths between $u$ and $v$ in $H$.
\end{proof}

Throughout the rest of this section, let $(G,x,y)$ be a counterexample of Theorem \ref{mainthm} with minimum $|V(G)|+|E(G)|$.
That is, for any $2$-connected rooted graph $(H,u,v)$ with $|V(H)|+|E(H)|<|V(G)|+|E(G)|$,
if the minimum degree of $(H,u,v)$ is at least $\ell+1$, then there exist $\ell$ admissible paths from $u$ to $v$ in $H$.

We now prove a sequence of lemmas and then, according to the order of some specified component (this will be clear after Lemma \ref{casebi}), the remaining proof will be divided into two subsections which we handle separately.

\begin{lem}\label{lem 8}
$G$ is $2$-connected, $x$ and $y$ are not adjacent in $G$, and $k\geq 3$.
\end{lem}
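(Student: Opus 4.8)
\textbf{Proof proposal for Lemma~\ref{lem 8}.}

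The plan is to establish the three assertions one at a time, each by a minimality argument that produces a smaller counterexample or directly exhibits $k$ admissible paths. First I would handle the $2$-connectivity of $G$. Recall that $(G,x,y)$ being a $2$-connected rooted graph only guarantees that $G+xy$ is $2$-connected, so $G$ itself could have a cut-vertex. If $G$ has a cut-vertex $z$, then since $G+xy$ is $2$-connected, the only way this can happen is that $x$ and $y$ lie in different ``sides'' of $z$, i.e.\ $G-z$ has exactly two components, one containing $x$ and one containing $y$. Let $A$ be the component of $G-z$ containing $x$ and let $G_1 = G[V(A)\cup\{z\}]$, and similarly $G_2$ on the $y$-side. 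Each $(G_i, \cdot, z)$ (with the appropriate root $x$ or $y$) is again a $2$-connected rooted graph: adding the edge from $x$ (resp.\ $y$) to $z$ makes $G_i$ $2$-connected because all internal vertices retain their high degree and $z$ is no longer a cut-vertex within $G_i$. Both $G_1,G_2$ are proper subgraphs, and every internal vertex still has degree $\ge k+1$ in $G_i$ (a vertex of $A$ other than $x$ has all its $G$-neighbours inside $G_1$). By minimality, $G_1$ has $k$ admissible paths from $x$ to $z$ and $G_2$ has $k$ admissible paths from $z$ to $y$; concatenating via Lemma~\ref{lemconcatenat} (with $W=\{z\}$, $s=k$, $t=k$) yields $2k-1\ge k$ admissible paths from $x$ to $y$ in $G$, a contradiction. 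Hence $G$ is $2$-connected.

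Next I would show $x$ and $y$ are non-adjacent in $G$. Suppose $xy\in E(G)$. The idea is to delete this edge and look at $G' = G - xy$. If $G'$ is still $2$-connected, then $(G',x,y)$ is a $2$-connected rooted graph (here $G'+xy = G$ is $2$-connected) with smaller size and the same minimum degree, so by minimality it already contains $k$ admissible paths from $x$ to $y$, which are also paths in $G$ --- contradiction. So $G'$ must not be $2$-connected, meaning $xy$ lies in ``most'' of the structure; one then argues that deleting $xy$ creates a cut-vertex, and analyses the block structure of $G-xy$ much as in the previous paragraph, again splitting at the cut-vertex, applying minimality to the smaller rooted pieces and reassembling with Lemma~\ref{lemconcatenat}. (In the degenerate situation where $G-xy$ is a path, $G$ is just a cycle plus the chord $xy$, which contains two admissible $x$--$y$ paths of lengths differing by a controllable amount; combined with $k\le 2$ being already known this is fine, but that case should be cross-checked against the claim $k\ge3$ proved last.)

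Finally, for $k\ge 3$: the cases $k=1,2$ of Theorem~\ref{mainthm} are already true --- $k=1$ is trivial (any $x$--$y$ path of length $\ge 2$ exists since $G+xy$ is $2$-connected and $x,y$ non-adjacent) and $k=2$ follows from Theorem~\ref{Thm:Fan}, since every internal vertex has degree $\ge k+1=3$. Therefore a counterexample must have $k\ge 3$.

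I expect the non-adjacency step to be the main obstacle, because $G-xy$ can fail to be $2$-connected in several configurations (a single cut-vertex, or a $2$-cut that becomes relevant once the chord is removed), and one must carefully set up the rooted subgraphs on each side so that (i) they are genuinely $2$-connected as rooted graphs, (ii) the minimum-degree bound survives the splitting, and (iii) the shared lengths condition $|R^1_j|=\cdots=|R^s_j|$ of Lemma~\ref{lemconcatenat} can be arranged --- typically by taking $W$ to be a single vertex so that the condition is vacuous. The $2$-connectivity step is structurally identical but easier since there is no edge to track.
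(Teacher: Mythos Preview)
Your overall strategy is right, but you have overcomplicated the non-adjacency step by misreading the definition of a $2$-connected rooted graph. You write ``If $G'$ is still $2$-connected, then $(G',x,y)$ is a $2$-connected rooted graph'' and then worry at length about the case where $G'=G-xy$ fails to be $2$-connected. But recall the definition: $(G',x,y)$ is a $2$-connected rooted graph if and only if $G'+xy$ is $2$-connected. Here $G'+xy=(G-xy)+xy=G$, and you have already shown $G$ is $2$-connected. So $(G',x,y)$ is automatically a $2$-connected rooted graph, regardless of whether $G'$ itself is $2$-connected; the minimum degree of $(G',x,y)$ is still at least $k+1$ since only the roots $x,y$ lost a degree; and minimality applies directly to give $k$ admissible $x$--$y$ paths in $G'\subseteq G$. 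The whole block-structure analysis you sketch for this case is unnecessary --- this is exactly what the paper does in one line.

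For the $2$-connectivity step your idea is correct, but applying minimality to \emph{both} sides is more than you need and creates a small gap: if one side, say $G_2$, consists of just the edge $yz$, then $(G_2,y,z)$ is not a $2$-connected rooted graph (an edge is not $2$-connected in the paper's convention) and you cannot invoke minimality there. The paper's fix is simpler: by symmetry assume $|V(G_1)|\ge 3$, apply minimality only to $(G_1,x,z)$ to get $k$ admissible $x$--$z$ paths, and then concatenate each with a single fixed path from $z$ to $y$ in $G_2$. This already gives $k$ admissible $x$--$y$ paths; there is no need for Lemma~\ref{lemconcatenat} or the $2k-1$ count. Your $k\ge 3$ argument is fine and matches the paper.
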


\begin{proof}
Theorem~\ref{mainthm} is obvious when $k=1$, and it follows from Theorem \ref{Thm:Fan} when $k=2$.
So $k\geq 3$.
Note that $|V(G)|\geq 4$, for otherwise, $|V(G)|=3$ and $(G,x,y)$ has minimum degree two and thus $k=1$, a contradiction.

Since $G+xy$ is $2$-connected, $G$ is connected.
Suppose that $G$ is not $2$-connected.
Then there exists a cut-vertex $b$ and two connected subgraphs $G_1,G_2$ of $G$ on at least two vertices such that $G=G_1\cup G_2$ and $V(G_1)\cap V(G_2)=\{b\}$.
We may assume that $x\in V(G_1)-b$, $y\in V(G_2)-b$ and by symmetry, $|V(G_1)|\geq 3$.
Then it is straightforward to see that $(G_1,x,b)$ is $2$-connected and has minimum degree at least $k+1$.
By the minimality of $G$, there exist $k$ admissible paths in $G_1$ from $x$ to $b$.
By concatenating each of these paths with a fixed path in $G_2$ from $b$ to $y$, we obtain $k$ admissible paths in $G$ from $x$ to $y$, a contradiction.
Therefore $G$ is $2$-connected.

Suppose that $x$ is adjacent to $y$ in $G$. Let $G'=G-xy$. Since $G$ is $2$-connected, clearly $(G',x,y)$ is $2$-connected and has minimum degree at least $k+1$.
By the minimality of $G$, $G'$ (and thus $G$) contains $k$ admissible paths from $x$ to $y$, a contradiction.
\end{proof}

\begin{lem}\label{case1}
There is no clique in $G-y$ of size at least three containing $x$, and there is no clique in $G-x$ of size at least three containing $y$.
\end{lem}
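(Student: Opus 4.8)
The plan is to argue by contradiction using the minimality of $(G,x,y)$, exploiting the fact that a large clique gives us lots of "free" detours we can splice into paths. Suppose, by symmetry, that $K$ is a clique in $G-y$ with $x\in K$ and $|K|=m\ge 3$; write $K=\{x=v_0,v_1,\dots,v_{m-1}\}$. Since $G$ is $2$-connected (Lemma \ref{lem 8}) and $y\notin K$, every vertex of $K-x$ has degree at least $k+1$ in $G$, and there is a vertex $z\in K-x$ that has a neighbour outside $K$; in fact I would first observe that at least two vertices of $K$ (other than $x$, using $2$-connectivity of $G$ or of $G+xy$) send edges out of $K$, since otherwise $K-x$ would be separated from $y$ by a single vertex. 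The strategy is to pick one such vertex, delete a vertex of $K$, and recurse.

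\medskip
\noindent\textbf{Key steps.}
\begin{enumerate}
\item Let $z\in K-x$ be a vertex with a neighbour in $V(G)-K$, chosen so that $G':=G-z$ is still $2$-connected as a rooted graph $(G',x,y)$ (i.e.\ $G'+xy$ is $2$-connected); such a choice is possible because $K\setminus\{x\}$ contains at least two vertices with neighbours outside $K$ and deleting a non-cut vertex of $G+xy$ inside $K$ keeps $2$-connectivity — here I use that $|K|\ge 3$ so $K\setminus\{x,z\}$ is still nonempty and keeps the rest of $K$ attached to $x$.
\item In $G'$, every vertex other than $x,y$ still has degree at least $k+1$, \emph{except possibly} the neighbours of $z$ inside $K$, which lost exactly one edge and now have degree at least $k$. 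To fix this, I would instead delete the edges from $z$ to $K\setminus\{x\}$ but not to $x$, or — cleaner — apply the minimality to $(G-z, x, y)$ directly with parameter $k-1$: every vertex of $G-z$ other than $x,y$ has degree $\ge k$ in $G-z$ (those in $K$ lost one edge to $z$; all others are unchanged and had $\ge k+1$). Thus $(G-z,x,y)$ is a $2$-connected rooted graph of minimum degree $\ge (k-1)+1$, so by minimality it contains $k-1$ admissible paths $P_1,\dots,P_{k-1}$ from $x$ to $y$.
\item Now I use $z$ and the clique to produce one more admissible path whose length extends the progression by one step. Since $z$ is adjacent to all of $K$, for any path $P_i$ that enters $K$ at $x$ and leaves at some $v_j\in K$ (which $P_i$ necessarily does, as $x\in K$ and $P_i$ eventually leaves $K$), we can reroute $P_i$ to detour $x\to z\to v_j$ instead of $x\to v_j$, changing its length by $+1$ or leaving room to choose $+1$ vs.\ $+2$; combined with the existing progression $\{|P_1|,\dots,|P_{k-1}|\}$ this yields a $k$th admissible path extending the arithmetic progression at the appropriate end, contradicting the choice of $G$.
\end{enumerate}

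\medskip
\noindent\textbf{Main obstacle.} The delicate point is Step 3: controlling the \emph{parity}/common difference when splicing the short detour through $z$ into the family $P_1,\dots,P_{k-1}$, since the $P_i$'s form a progression with common difference $1$ \emph{or} $2$, and the detour must land exactly one step beyond the extreme term without disturbing admissibility ($|P_1|\ge 2$). If the common difference is $2$, inserting a single vertex $z$ shifts a length by $1$, which is the wrong parity; here I would exploit the freedom that $|K|\ge 3$ gives \emph{two} rerouting options (go through $z$, or go through $z$ and a second clique vertex) to adjust the shift to $+2$, or alternatively re-run Step 2 deleting a vertex so that the returned $(k-1)$ paths are positioned to leave the extension slot on the correct side. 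A secondary technical nuisance is ensuring in Step 1 that some $z\in K-x$ with an external neighbour can be deleted while preserving $2$-connectivity of $G+xy$; this should follow from a short block-structure argument using that $G+xy$ is $2$-connected and $|K|\ge 3$, but it must be spelled out. I expect Step 3's parity bookkeeping to be where the real work lies.
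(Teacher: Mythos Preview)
Your approach has genuine gaps and is quite different from the paper's. In Step~1 you assert that deleting a non-cut vertex of $G+xy$ preserves $2$-connectivity, but removing a non-cut vertex from a $2$-connected graph only guarantees connectedness, not $2$-connectedness. For instance, if $K=\{x,z_1,z_2\}$ and some $2$-connected piece $H$ with $y\notin V(H)$ is attached to the rest of $G$ precisely at $\{z_1,z_2\}$, then deleting either $z_i$ makes the other a cut-vertex of $(G-z_i)+xy$, so no valid $z$ exists. More seriously, Step~3 cannot be completed as stated. The $k-1$ admissible paths $P_1,\dots,P_{k-1}$ returned by minimality are a black box: nothing forces the second vertex of any $P_i$ to lie in $K$ (or even in $N(z)$), so the detour $x\to z\to v_j$ may simply not be available; and even when a $+1$ reroute exists, if the common difference is~$2$ you need a $+2$ reroute through a clique vertex \emph{off} $P_{k-1}$, yet $P_{k-1}$ may already pass through all of $K\setminus\{z\}$. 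The ``parity bookkeeping'' you flag is not a technicality to be cleaned up later---it is the entire content of the lemma, and the minimality hypothesis does not return enough structural information to carry it out.

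The paper proceeds quite differently. It takes $K$ \emph{maximal}, lets $C$ be the component of $G-K$ containing $y$, and splits into cases $|V(C)|=1$ and $|V(C)|\ge 2$. In each case it isolates an end-block $B$ (or a component $D$) of the relevant piece, \emph{contracts} part of its attachment to $K$ into a single new root vertex, and applies minimality to that smaller rooted graph to obtain roughly $k-t+c$ admissible paths (with $t=|K|$); maximality of $K$ is what bounds the degree loss under contraction. These are then concatenated, via Lemma~\ref{lemconcatenat}, with the $t-1$ (or $t$, or $m$) paths of consecutive lengths freely available inside $G[K]$, plus a fixed path out to $y$. The point is that Lemma~\ref{lemconcatenat} lets you \emph{add} two controlled progressions to reach length $k$, rather than trying to \emph{extend} an uncontrolled one by a single step as your Step~3 attempts.
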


\begin{proof}
Suppose to the contrary that there is a clique $K$ in $G-y$ of size at least three containing $x$.
We choose $K$ such that $|K|$ is maximum. Let $t=|K|$.
So $t\geq 3$.
Since $x$ and $y$ are non-adjacent by Lemma \ref{lem 8}, $y \not \in K$.
So there exists a component $C$ of $G-K$ containing $y$.

Suppose $V(C)=\{y\}$.
Then $N_G(y)\subseteq K-x$.
Let $Y=N_G(y)\cap K$, and let $m=|Y|$.
Since $G$ is $2$-connected, we have $m\geq 2$.
For each vertex $v\in K$, let $\mathscr{D}_v$ denote the family of components $D\neq C$ of $G-K$ such that $v\in N_G(D)$.
Let $\mathscr{D}=\bigcup_{v\in K}\mathscr{D}_v$, $\mathscr{D}'=\bigcup_{v\in Y}\mathscr{D}_v$ and $\mathscr{D}''=\mathscr{D}-\mathscr{D}'$.
If there is a vertex $v\in K-x$ such that $\mathscr{D}_v=\emptyset$ or there exists some $D\in \mathscr{D}_v$ with $|V(D)|=1$,
then $t\geq k+1$, from which one can easily find $k$ paths of lengths $2,3,\ldots,k+1$ from $x$ to $y$ in $G[K\cup \{y\}]$, a contradiction.
So for every $v\in K-x$, $\mathscr{D}_v\neq\emptyset$ and $|V(D)|\geq2$ for every $D\in\mathscr{D}_v$.

Suppose that there exists some $D\in\mathscr{D}'\backslash \mathscr{D}_x$.
Let $v$ be a vertex in $N_G(D)\cap Y$ such that $D \in \mathscr{D}_v$.
Since $G$ is $2$-connected, $N_G(D)-\{v\} \neq\emptyset$.
Let $G_1$ be the graph obtained from $G[N_G[D]]$ by contracting $N_G(D)-\{v\}$ into a new vertex $u_1$.
Since $D \not \in \mathscr{D}_x$, we see $|N_G(D)-\{v\}|\leq t-2$.
So $(G_1,u_1,v)$ is $2$-connected and has minimum degree at least $k-t+4$.
By the minimality of $G$, $G_1$ contains $k-t+3$ admissible paths from $u_1$ to $v$.
Hence, $G[V(D) \cup K]$ contains $k-t+3$ admissible paths $P_i$ from a vertex $p_i\in N_G(D)-\{v\}$ to $v$ internally disjoint from $K$ for $i\in [k-t+3]$.
Since $K$ is a clique, $K-v$ contains $t-2$ paths from $x$ to $p_i$ with lengths $1,2,\ldots,t-2$, respectively.
By Lemma \ref{lemconcatenat}, by concatenating each of these paths with $P_i\cup \{vy\}$, we obtain $k$ admissible paths from $x$ to $y$ in $G$, a contradiction.

Hence $\mathscr{D}'\subseteq\mathscr{D}_x$.
Let $G_2$ be the graph obtained from $G-y$ by contracting $Y$ into a new vertex $u_2$.
Let $K'=G_2[(K-Y)\cup\{u_2\}]$. Then $K'$ is a complete graph of order $t-m+1\geq 2$ in $G_2$.
Any component $D\neq C$ of $G-K$ in $G$ is also a component of $G_2-V(K')$ in $G_2$.
If $D\in\mathscr{D}'$, then $D$ is adjacent in $G_2$ to both $x$ and $u_2$ (since $\mathscr{D}' \subseteq \mathscr{D}_x$);
otherwise $D\in\mathscr{D}''$ and $D$ is adjacent to at least two vertices of $K'-u_2$ in $G_2$ since $G$ is $2$-connected.
Since $N_G(y)$ is a clique in $G$, we have that $G-y$ is $2$-connected.
Since $\mathscr{D}'\subseteq\mathscr{D}_x$,
$(G_2,x,u_2)$ is 2-connected and has minimum degree at least $k-m+2$.
By the minimality of $G$, $G_2$ contains $k-m+1$ admissible paths from $x$ to $u_2$.
Hence, $G-y$ contains $k-m+1$ admissible paths $P_i$ from $x$ to a vertex $p_i\in Y$ for $i\in [k-m+1]$ internally disjoint from $Y$.
Since $G[Y\cup\{y\}]$ is complete, $G[Y\cup\{y\}]$ contains $m$ paths from $p_i$ to $y$ with lengths $1,2,\ldots,m$, respectively.
By Lemma~\ref{lemconcatenat}, we obtain $k$ admissible paths from $x$ to $y$ in $G$, a contradiction.

Hence $|V(C)|\geq 2$.
If $C$ is $2$-connected, then let $B=C$ and $b=y$;
otherwise let $B$ be an end-block of $C$ with cut-vertex $b$ such that $y\notin V(B)-\{b\}$.
Suppose $B$ is an edge $vb$. Then $v$ has at least $k$ neighbours in $K$.
Since $K$ is a clique, we can find $k$ consecutive paths from $x$ to $v$ in $G[K \cup\{v\}]$.
Concatenating each of these paths with a fixed path in $C$ from $v$ to $y$, we find $k$ admissible paths from $x$ to $y$, a contradiction.

Hence $B$ is $2$-connected. Let $P$ be a path in $C-V(B-b)$ from $b$ to $y$.
Since $G$ is $2$-connected, we have $N_G(B-b)\cap K \neq\emptyset$.

Suppose that $N_G(B-b)\cap(K-\{x\})\neq\emptyset$.
Let $G_3$ be the graph obtained from $G[V(B)\cup(N_G(B-b)\cap(K-\{x\}))]$ by contracting $N_G(B-b)\cap(K-\{x\})$ into a vertex $u_3$.
By the maximality of $K$, every vertex in $V(B-b)$ is adjacent to at most $t-1$ vertices in $K$.
Then $(G_3,u_3,b)$ is $2$-connected and has minimum degree at least $k-t+3$.
By the minimality of $G$, $G_3$ contains $k-t+2$ admissible paths from $u_3$ to $b$.
Hence, $G[V(B)\cup(N_G(B-b)\cap(K-\{x\}))]$ contains $k-t+2$ admissible paths $P_i$
from some vertex $p_i\in N_G(B-b)\cap(K-\{x\})$ to $b$ internally disjoint from $K$ for $i\in [k-t+2]$.
Note that for each $i$, $G[K]$ contains $t-1$ paths from $x$ to $p_i$ with lengths $1,2,\ldots,t-1$, respectively.
By Lemma~\ref{lemconcatenat}, by concatenating each of these paths with $P_i\cup P$, we obtain $k$ admissible paths from $x$ to $y$ in $G$, a contradiction.

Therefore $N_G(B-b)\cap K=\{x\}$.
Then the rooted graph $(G[V(B)\cup\{x\}],x,b)$ is $2$-connected and has minimum degree at least $k+1$.
By the minimality of $G$, $G[V(B)\cup\{x\}]$ contains $k$ admissible paths from $x$ to $b$.
By concatenating each of these paths with $P$, we obtain $k$ admissible paths from $x$ to $y$, a contradiction.

This proves that there is no clique in $G-y$ of size at least three containing $x$.
Similarly, there is no clique in $G-x$ of size at least three containing $y$, completing the proof of Lemma~\ref{case1}.
\end{proof}

In the rest of this section, by symmetry between $x$ and $y$, we may assume that $d_G(x)\leq d_G(y)$.

\begin{lem}\label{case2}
$G-y$ has a cycle of length four containing $x$.
\end{lem}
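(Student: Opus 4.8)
The plan is to argue by contradiction: assuming $G-y$ has no $4$-cycle through $x$, I will produce $k$ admissible paths from $x$ to $y$ in $G$ by passing to a strictly smaller $2$-connected rooted graph and invoking the minimality of $G$. First I would record the structure forced by the assumption. Set $N:=N_G(x)$; then $y\notin N$ by Lemma~\ref{lem 8}, and $|N|=d_G(x)\ge 2$. An edge inside $N$, say $uu'$, would give the triangle $xuu'$ in $G-y$, a clique of size three through $x$, contradicting Lemma~\ref{case1}; so $N$ is independent. Likewise, if some vertex $v\in V(G)\setminus(\{x\}\cup N\cup\{y\})$ had two neighbours $u,u'\in N$, then $xuvu'x$ would be a $4$-cycle in $G-y$ through $x$; so every such $v$ has at most one neighbour in $N$. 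Finally $V(G)\ne\{x\}\cup N\cup\{y\}$: otherwise each $u\in N$ has all its neighbours in $\{x,y\}$, so $d_G(u)\le 2<k+1$.

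The main step is a contraction. Let $C$ be the component of $G-N_G[x]$ containing $y$, and first suppose $|V(C)|\ge 2$. Then $W:=V(G)\setminus V(C)$ contains $\{x\}\cup N$ as well as every other component of $G-N_G[x]$ (each attached only to $N$), so $G[W]$ is connected; I would contract $W$ to a single new vertex $x'$, obtaining $\hat G$. Since contracting a connected set in a $2$-connected graph preserves $2$-connectivity and $|V(\hat G)|=|V(C)|+1\ge 3$, the rooted graph $(\hat G,x',y)$ is $2$-connected, and it is strictly smaller than $G$ because $|W|\ge 3$. Crucially, each $v\in V(C)\setminus\{y\}$ has at most one neighbour in $W$ (namely in $N$), so $d_{\hat G}(v)=d_G(v)\ge k+1$; thus $(\hat G,x',y)$ has minimum degree at least $k+1$. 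By minimality there are $k$ admissible paths from $x'$ to $y$ in $\hat G$; replacing the first edge $x'q$ of each by the edge $aq$ of $G$ it comes from (with $a\in N$) and prepending $xa$ gives $k$ paths from $x$ to $y$ in $G$ whose lengths form the same progression shifted by one and are all at least $3$ — that is, $k$ admissible $x$-$y$ paths, contradicting the choice of $G$.

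It remains to handle $V(C)=\{y\}$, where $N_G(y)\subseteq N$, so $d_G(x)=|N|\ge d_G(y)\ge d_G(x)$ forces $N_G(x)=N_G(y)=N$: here $x$ and $y$ are non-adjacent twins with $N$ independent. Now $G-\{x,y\}$ splits into components each attached to at least two vertices of $N$. If more than one, I would contract all of them but one, together with $y$, keeping $x$ as one root, and argue exactly as above (the vertices of $N$ regain their degree because they are still adjacent to $x$). If $G-\{x,y\}=:D$ is connected, the only low-degree vertices of $D$ are the (independent) vertices of $N$, and for two of them $a,b$ one checks that $(D,a,b)$ is a smaller $2$-connected rooted graph of minimum degree at least $k+1$ when $|N|=2$, giving $k$ admissible $a$-$b$ paths and hence $k$ admissible $x$-$y$ paths $xa\dots by$ in $G$. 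I expect this last "twin" case — in particular ruling it out when $|N|\ge 3$ and $D$ is connected, where the contractions considered so far either do not shrink $G$ or drop the degree at $N$ below $k+1$ — to be the main obstacle, requiring a more careful choice of what to contract or a finer analysis of how paths through $D$ combine.
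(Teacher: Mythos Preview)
Your main case ($|V(C)|\ge 2$) is correct, and contracting the whole set $W=V(G)\setminus V(C)$ rather than just $N_G[x]$ is a clean variant of the paper's argument: it avoids the paper's block analysis of $G_1$, since $\hat G-x'=C$ is connected by construction. The verification that $(\hat G,x',y)$ is $2$-connected with minimum degree $\ge k+1$ and strictly smaller is fine, and the lift of the admissible paths back to $G$ works as you describe.

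The gap is exactly where you flag it: the twin case $V(C)=\{y\}$ with $|N|\ge 3$. Your suggestion to ``contract all components but one, together with $y$'' does not work as stated: the set $\{y\}\cup D_2\cup\cdots$ is not connected, and even after identifying it to a single vertex the degrees of vertices in $N$ genuinely drop (they lose all their neighbours in $D_2,\dots,D_m$ and only pick up one edge to the new vertex), so the minimum-degree hypothesis fails. Your $|N|=2$ argument via $(G-\{x,y\},a,b)$ is correct but does not extend, since for $|N|\ge 3$ there are more than two low-degree vertices in $G-\{x,y\}$.

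The missing idea is to work \emph{inside a single component} of $G-(N\cup\{x,y\})$ rather than with all of $G-\{x,y\}$. Since $V(G)\ne\{x,y\}\cup N$, pick any component $D$ of $G-(N\cup\{x,y\})$; then $N_G(D)\subseteq N$ and $|N_G(D)|\ge 2$ by $2$-connectivity. Fix $u\in N_G(D)$ and let $G_2$ be obtained from $G[V(D)\cup N_G(D)]$ by contracting $N_G(D)-\{u\}$ to a new vertex $v$. The crucial point you already established --- that every vertex of $D$ has at most one neighbour in $N$ --- gives that $(G_2,u,v)$ is a $2$-connected rooted graph of minimum degree at least $k+1$, regardless of $|N|$. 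Minimality then yields $k$ admissible paths from $u$ to some $p_i\in N$ inside $G-\{x,y\}$, and since $u,p_i\in N=N_G(x)=N_G(y)$ you extend via $xu$ and $p_iy$ to get $k$ admissible $x$-$y$ paths in $G$. This closes the twin case uniformly in $|N|$ and does not require $G-\{x,y\}$ to be connected.
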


\begin{proof}
Suppose that $x$ is not contained in any cycle of length four in $G-y$.
Then
\begin{align}\label{equ:C4-free}
|N_G(v)\cap N_G(x)|\leq 1 \mbox{ for every } v\in V(G)-\{x,y\}.
\end{align}
Let $G_1$ be the graph obtained from $G$ by contracting $N_G[x]$ into a new vertex $x_1$.
By \eqref{equ:C4-free}, $G_1$ is connected and the minimum degree of $(G_1,x_1,y)$ is at least $k+1$.
If $G_1$ is not $2$-connected, then $x_1$ is the unique cut-vertex of $G_1$ and we let $B$ be the end-block of $G_1$ containing $x_1$ and $y$;
otherwise $G_1$ is $2$-connected and let $B=G_1$.

Suppose that $B$ is not an edge. Then $(B,x_1,y)$ is $2$-connected and has minimum degree at least $k+1$.
By the minimality of $G$, $B$ contains $k$ admissible paths from $x_1$ to $y$.
Then $G-x$ contains $k$ admissible paths $P_i$ from a vertex $p_i\in N_G(x)$ to $y$ for all $i\in [k]$.
By concatenating each of these paths with $xp_i$, we obtain $k$ admissible paths from $x$ to $y$ in $G$, a contradiction.

Therefore $B$ is an edge. Since $d_G(x)\leq d_G(y)$, we conclude that $N_G(x)=N_G(y)$.
By Lemma \ref{case1} and $k\geq 3$, we see $V(G)\neq N_G[x]\cup\{y\}$.
So there exists a component $D$ of $G-N_G(x)$ not containing $x$ and $y$.
Since $G$ is $2$-connected, we have $|N_G(D)|\geq 2$.
Fix a vertex $u$ in $N_G(D)$.
Let $G_2$ be the graph obtained from $G[N_G[D]]$ by contracting $N_G(D)-\{u\}$ into a new vertex $v$.
Then by \eqref{equ:C4-free}, $(G_2,u,v)$ is $2$-connected and has minimum degree at least $k+1$.
By the minimality of $G$, $G_2$ contains $k$ admissible paths from $u$ to $v$.
So $G-\{x,y\}$ contains $k$ admissible paths $P_i$ from $u$ to some vertex $p_i\in N_G(x)-\{u\}$ for $i\in [k]$.
By concatenating each of these paths with $xu$ and $p_iy$, we obtain $k$ admissible paths from $x$ to $y$ in $G$, a contradiction.
\end{proof}

\begin{lem}\label{case3}
Let $C=xx_1ax_2x$ be a cycle of length four in $G-y$.
Then every vertex in $V(G)-(V(C)\cup\{y\})$ is not adjacent in $G$ to all of $x_1,x_2,a$.
\end{lem}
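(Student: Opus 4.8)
The plan is to argue by contradiction in the style of the preceding lemmas, exploiting the minimality of $(G,x,y)$ together with the structural restrictions already established in Lemmas~\ref{lem 8}, \ref{case1}, and \ref{case2}. Suppose some vertex $b\in V(G)-(V(C)\cup\{y\})$ is adjacent to all of $x_1,x_2,a$. Since $C=xx_1ax_2x$, the vertices $x_1,a,x_2,b$ together span a large-ish subgraph near $x$; in particular $\{x_1,a,x_2,b\}$ contains many edges, and combined with $x$ we get a dense piece of $G$ containing $x$ but not $y$. The first step is to record exactly which edges are forced: $b$ sees $x_1,x_2,a$; $x$ sees $x_1,x_2$; and $x_1,x_2$ both see $a$. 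The natural move is to find, inside $G[\{x,x_1,a,x_2,b\}]$, several $x$--$z$ paths of consecutive lengths ending at a common vertex $z$ (for instance $z=b$, reached by $x x_1 b$, $x x_1 a b$, $x x_2 a x_1 b$ of lengths $2,3,4$, and similarly through $x_2$), so that this local gadget can be concatenated via Lemma~\ref{lemconcatenat} with paths produced by the minimality of $G$ in the rest of the graph.

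Concretely, I would set $T=\{x_1,x_2,a,b\}$ and consider the components of $G-T$. One of them, call it $C_0$, contains $y$; since $G$ is $2$-connected and $x$ is non-adjacent to $y$ (Lemma~\ref{lem 8}), $x$ lies in a component of $G-T$, which may or may not be $C_0$. I expect the argument to split according to whether $x\in V(C_0)$ and, if not, how $x$'s component attaches to $T$; in each case one contracts the relevant part of $G$ onto a rooted graph on fewer vertices+edges, applies the minimality of $G$ to get a suitable family of admissible paths with a common endpoint structure, and then glues in the $x$-side gadget. The bookkeeping mirrors Lemma~\ref{case1}: each vertex of $T$ other than possibly $x_1,x_2$ (which already have the guaranteed neighbours $x,a,b$) must, by $2$-connectivity and the degree hypothesis, send out enough edges into $G-T$, and if some such vertex fails to do so we directly exhibit $k$ admissible $x$--$y$ paths inside the dense local piece, a contradiction. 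A small case to dispose of first is $V(G)=V(C)\cup\{y,b\}$ (or close to it): then the degree condition $d_G(v)\ge k+1$ for $v\in\{a,b\}$ forces $k$ to be small, and one checks $k\le 2$ directly against Lemma~\ref{lem 8}.

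The main obstacle I anticipate is the same one that makes Lemma~\ref{case1} long: carefully choosing, in each case, the vertex of $T$ onto which to root, and verifying that the contracted rooted graph is genuinely $2$-connected with minimum degree at least the right value $k+1-|T\setminus\{\text{root}\}|+(\text{something})$, so that minimality yields exactly enough paths for Lemma~\ref{lemconcatenat} to close the count to $k$. The subtlety is that $x$ may itself lie in the component $C_0$ containing $y$, in which case we cannot simply contract ``the $x$-side'' away from ``the $y$-side'', and we instead have to argue inside $C_0$ that removing a suitable end-block attached only through $T\cup\{x\}$ reduces the problem; this is precisely the end-block manoeuvre used repeatedly above, and here the extra vertex $b$ adds one more adjacency pattern to track. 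I would also keep an eye on the degenerate possibility that $x_1$ or $x_2$ has degree exactly $k+1$ with its entire neighbourhood inside $\{x,a,b\}\cup(\text{one small component})$, which, as in the ``$\mathscr{D}_v=\emptyset$'' subcase of Lemma~\ref{case1}, should again force a clique large enough to violate Lemma~\ref{case1} or to produce the $k$ paths outright.
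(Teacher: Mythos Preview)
Your outline is in the right spirit---contradiction, a local gadget around $x$, minimality on the remainder, then Lemma~\ref{lemconcatenat}---but there is a genuine gap in the bookkeeping that your sketch does not address, and it is exactly the idea that makes the paper's argument work.

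Fixing $T=\{x_1,x_2,a,b\}$ amounts to taking the clique $K=\{a,b\}$ of size $t=2$. The gadget $G[\{x\}\cup T]$ then supplies only three $x$--$u$ paths of consecutive lengths for $u\in\{a,b\}$ (lengths $2,3,4$) and only two for $u\in\{x_1,x_2\}$. For Lemma~\ref{lemconcatenat} to reach $k$, the ``other side'' must therefore furnish $k-2$ admissible paths, i.e.\ the contracted rooted graph must have minimum degree at least $k-1$. But nothing in your setup prevents a further vertex $c\in V(G)-(T\cup\{x,y\})$ from being adjacent to all of $x_1,x_2,a,b$. Such a $c$ lands on the side you feed to minimality and, after you strip $x_1,x_2$ and contract $\{a,b\}$, its degree drops to at least $k+1-2-1=k-2$ only; you then obtain $k-3$ admissible paths, and $3+(k-3)-1=k-1$ is one short. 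The paper fixes this by choosing a \emph{maximal} clique $K\subseteq G-\{x,y,x_1,x_2\}$ with $a\in K$ and every vertex of $K$ adjacent to both $x_1$ and $x_2$; maximality forces any vertex adjacent to both $x_1,x_2$ to miss at least one vertex of $K$, which is precisely the extra unit of degree needed to make the arithmetic close (min degree $\ge k-t+1$ against $t+1$ gadget paths).

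A secondary point: the paper removes $V(C)\cup K$ (so $x$ is on the gadget side) and studies the component $F\ni y$ of $G-(V(C)\cup K)$. Your choice to leave $x$ in $G-T$ is what creates the ``is $x\in C_0$?'' headache you flag; putting $x$ with the gadget eliminates that case split entirely. The remaining structure---$|V(F)|=1$ versus $|V(F)|\ge 2$, then the end-block $B$ with cases $N_G(B-b)\cap K\neq\emptyset$, $N_G(B-b)\subseteq\{x,x_1,x_2,b\}$, etc.---is exactly the skeleton you describe, but it only goes through once the maximal-$K$ device is in place.
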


\begin{proof}
Suppose to the contrary that there exists a vertex $v \in V(G)-(V(C) \cup \{y\})$ adjacent in $G$ to all of $x_1,x_2,a$.
Let $K$ be a maximal clique in $G-\{x,y,x_1,x_2\}$ such that $a\in K$ and every vertex in $K$ is adjacent to both of $x_1$ and $x_2$.
Let $t=|K|$. So $t\geq 2$.
We have the following two facts:
\begin{itemize}
	\item [(a)] for any $u\in K$, $G[V(C)\cup K]$ contains $t+1$ admissible paths from $x$ to $u$ of lengths $2,3,...,t+2$, respectively;
	\item [(b)] for any $i\in [2]$, $G[V(C)\cup K]$ contains $t$ admissible paths from $x$ to $x_i$ of lengths $3,4,...,t+2$, respectively.
\end{itemize}
Let $F$ be the component of $G-(V(C)\cup K)$ containing $y$.

Suppose $V(F)=\{y\}$.
Then $N_G(y)\subseteq V(K)\cup\{x_1,x_2\}$.
Since $G$ is $2$-connected, we have $|N_G(y)|\geq2$.
If $N_G(y)\neq\{x_1,x_2\}$, then there exists a triangle containing $y$ in $G-x$, contradicting Lemma~\ref{case1}.
Therefore $N_G(y)=\{x_1,x_2\}$.
Since $d_G(x)\leq d_G(y)$, $N_G(x)=N_G(y)=\{x_1,x_2\}$.
Let $G'=G-\{x,y\}$. It is clear that $(G',x_1,x_2)$ is $2$-connected and has minimum degree at least $k+1$.
By the minimality of $G$, $G'$ contains $k$ admissible paths from $x_1$ to $x_2$.
By concatenating each of these paths with $xx_1$ and $x_2y$, $G$ contains $k$ admissible paths from $x$ to $y$, a contradiction.

So $\lvert V(F) \rvert \geq 2$.
If $F$ is $2$-connected, let $B=F$ and $b=y$; otherwise let $B$ be an end-block of $F$ with cut-vertex $b$ such that $y\notin V(B)-b$.

Suppose that $B$ is an edge $vb$.
If $v$ is adjacent to $x$, then by Lemma~\ref{case1}, $N_G(v)\cap \{x_1,x_2\}=\emptyset$ and thus $t\geq |N_G(v)\cap K|\geq k-1$.
If $v$ is not adjacent to $x$, then by the maximality of $K$, it holds that $t+1\geq |N_G(v)\cap (K\cup \{x_1,x_2\})|\geq k\geq 3$.
So in both cases, we have $t\geq k-1$ and there exists some $u\in N_G(v)\cap K$.
By (a), there exist $k$ admissible paths from $x$ to $y$ in $G$, a contradiction.

Therefore $B$ is $2$-connected.
Let $P$ be a path in $F-V(B-b)$ from $b$ to $y$.

Suppose that $N_G(B-b)\cap K\neq\emptyset$.
Let $G_1$ be the graph obtained from $G[V(B)\cup(N_G(B-b)\cap K)]$ by contracting $N_G(B-b)\cap K$ into a new vertex $u_1$.
Let us consider the degree of any $v\in V(B-b)$ in $G_1$.
If $v$ is adjacent to both $x_1,x_2$, then by Lemma~\ref{case1} and the maximality of $K$, $v$ is not adjacent to $x$ and is adjacent to at most $t-1$ vertices in $K$,
implying that $d_{G_1}(v)\geq k+1-t$;
if $v$ is adjacent to exactly one of $x_1,x_2$, then $v$ is not adjacent to $x$ and thus $d_{G_1}(v)\geq k+1-t$;
if $v$ is adjacent to none of $x_1,x_2$, then $v$ may be adjacent to $x$ and all vertices in $K$, which also shows that $d_{G_1}(v)\geq k+1-t$.
So $(G_1,u_1,b)$ is $2$-connected and has minimum degree at least $k-t+1$.
By the minimality of $G$, $G_1$ contains $k-t$ admissible paths from $u_1$ to $b$.
Hence, $G$ contains $k-t$ admissible paths $P_i$ from a vertex $p_i\in N_G(B-b)\cap K$ to $b$ for $i\in [k-t]$ internally disjoint from $V(C)\cup K$.
By (a), $G[V(C)\cup K]$ contains $t+1$ paths from $x$ to $p_i$ with lengths $2,3,\ldots,t+2$, respectively.
By Lemma~\ref{lemconcatenat}, concatenating each of these path with $P_i\cup P$ leads to $k$ admissible paths from $x$ to $y$, a contradiction.

Therefore, $N_G(B-b)\subseteq\{x,x_1,x_2,b\}$. Since $G$ is $2$-connected, $N_G(B-b)\cap\{x,x_1,x_2\}\neq\emptyset$.
If $x_1\in N_G(B-b)$, then $(G[B\cup\{x_1\}],x_1,b)$ is $2$-connected and has minimum degree at least $k$ by Lemma \ref{case1}.
By the minimality of $G$, $G[B\cup\{x_1\}]$ contains $k-1$ admissible paths from $x_1$ to $b$.
By (b), there are $t$ admissible paths from $x$ to $x_1$ in $G[C\cup K]$.
By concatenating each of the above paths with $P$, we obtain $k-1+t-1\geq k$ admissible paths from $x$ to $y$ in $G$, a contradiction.
By symmetry between $x_1$ and $x_2$, this shows that $x_1, x_2\notin N_G(B-b)$. So $N_G(B-b)=\{x,b\}$.
Then $(G[B\cup\{x\}],x,b)$ is a $2$-connected rooted graph with minimum degree at least $k+1$,
from which one can obtain $k$ admissible paths from $x$ to $y$ by the minimality of $G$, a contradiction.
This completes the proof of Lemma \ref{case3}.
\end{proof}

\begin{lem}\label{casebi} There exists a positive integer $s$ and an induced complete bipartite subgraph $Q$ with bipartition $(Q_1,Q_2)$ in $G$ satisfying that
	\begin{enumerate}
		\item $x\in Q_2, y\notin V(Q),|Q_1|\geq|Q_2|=s+1\geq 2$, and
		\item for every $v\in V(G)-(V(Q)\cup\{y\})$, 
		\begin{enumerate}
			\item $|N_G(v)\cap Q|\leq s+1$, $|N_G(v)\cap Q_1|\leq s+1$, $|N_G(v)\cap Q_2|\leq s$, and
			\item if $v$ is adjacent to both of $Q_1$ and $Q_2$, then $|N_G(v)\cap Q_1|=1$.
		\end{enumerate}
	\end{enumerate}
\end{lem}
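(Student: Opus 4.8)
The plan is to take $Q$ to be a suitably extremal induced complete bipartite subgraph having $x$ on its smaller side, using Lemmas~\ref{case1}, \ref{case2} and \ref{case3} as the engine. Call an ordered pair $(Q_1,Q_2)$ of disjoint subsets of $V(G)-\{y\}$ a \emph{good pair} if $x\in Q_2$, both $Q_1$ and $Q_2$ are independent in $G$, every vertex of $Q_1$ is adjacent in $G$ to every vertex of $Q_2$, and $2\le|Q_2|\le|Q_1|$; thus $G[Q_1\cup Q_2]$ is exactly the complete bipartite graph with parts $Q_1,Q_2$. First I would record two facts. By Lemma~\ref{lem 8} the vertices $x,y$ are non-adjacent, so every neighbour of $x$ is distinct from $y$, and hence an edge inside $N_G(x)$ would produce a clique of size three in $G-y$ containing $x$, which Lemma~\ref{case1} forbids; therefore $N_G(x)$ is independent. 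Also a good pair exists: the four-cycle $C=xx_1ax_2x$ supplied by Lemma~\ref{case2} has no chord (a chord $xa$ or $x_1x_2$ would again give a triangle through $x$ in $G-y$), so $(\{x_1,x_2\},\{x,a\})$ is a good pair. I would then fix a good pair $(Q_1,Q_2)$ with $|Q_2|$ as large as possible and, subject to that, with $|Q_1|$ as large as possible, set $s=|Q_2|-1\ge 1$, and let $Q=G[Q_1\cup Q_2]$. Part~(1) of Lemma~\ref{casebi} then holds by construction, and it remains to check part~(2) for an arbitrary $v\in V(G)-(V(Q)\cup\{y\})$.

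For $|N_G(v)\cap Q_2|\le s$: if instead $v$ were adjacent to all $s+1$ vertices of $Q_2$, then in particular $v\in N_G(x)$; since $Q_1\subseteq N_G(x)$ and $N_G(x)$ is independent, $v$ has no neighbour in $Q_1$, so $(Q_1\cup\{v\},Q_2)$ is a good pair with the same $|Q_2|$ but a strictly larger first part, contradicting the choice of $(Q_1,Q_2)$.

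For part~2(b) and $|N_G(v)\cap Q_1|\le s+1$: suppose $v$ has two distinct neighbours $u_1,u_2$ in $Q_1$. If $v$ also had a neighbour $q$ in $Q_2-\{x\}$, then $xu_1qu_2x$ would be a four-cycle in $G-y$ through $x$ (its four vertices are distinct and its edges all lie in $Q$) with $v\notin V(Q)\cup\{y\}$ adjacent to all of $u_1,u_2,q$, contradicting Lemma~\ref{case3}; and if $v$ were adjacent to $x$, then $v\in N_G(x)$ has a neighbour $u_1\in Q_1\subseteq N_G(x)$, contradicting independence of $N_G(x)$. Hence any $v$ with at least two neighbours in $Q_1$ has no neighbour in $Q_2$, which is exactly 2(b). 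If moreover $|N_G(v)\cap Q_1|\ge s+2$, then $v$ has no neighbour in $Q_2$ by the above, so choosing $U\subseteq N_G(v)\cap Q_1$ with $|U|=s+2$, the pair $(U,Q_2\cup\{v\})$ is a good pair ($U$ and $Q_2\cup\{v\}$ are independent, every vertex of $U$ is adjacent to every vertex of $Q_2\cup\{v\}$, and $|U|=s+2=|Q_2\cup\{v\}|$) whose second part is strictly larger than $Q_2$, contradicting maximality of $|Q_2|$; so $|N_G(v)\cap Q_1|\le s+1$. Finally $|N_G(v)\cap Q|\le s+1$: if $v$ meets both $Q_1$ and $Q_2$ then $|N_G(v)\cap Q_1|=1$ by 2(b), whence $|N_G(v)\cap Q|\le 1+s$; otherwise $|N_G(v)\cap Q|\le\max\{|N_G(v)\cap Q_1|,|N_G(v)\cap Q_2|\}\le s+1$. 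This would complete part~(2).

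The main obstacle I anticipate is getting the extremal choice exactly right: one must maximise $|Q_2|$ \emph{before} $|Q_1|$, so that ``enlarge $Q_1$'' remains available as the contradiction for the $Q_2$-bound, while the ``spawn a new four-cycle through $x$ and invoke Lemma~\ref{case3}'' mechanism is what forces 2(b); once those two are secured, the estimate 2(a) is automatic, and it is not obvious in advance that a single extremal object simultaneously yields all three parts. Everything else should be routine bookkeeping that each modified pair is again a good pair --- in particular that it avoids $y$, which is immediate because $v\ne y$ and $Q_1,Q_2\subseteq V(G)-\{y\}$.
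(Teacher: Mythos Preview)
Your proposal is correct and follows essentially the same approach as the paper: choose a complete bipartite subgraph $Q$ in $G-y$ with $x$ on the smaller side, maximise $|Q_2|$ first and then $|Q_1|$, and read off all the bounds from this extremal choice together with Lemmas~\ref{case1} and~\ref{case3}. The only cosmetic difference is that you build the ``induced'' requirement into your definition of a good pair from the start (using the observation that $N_G(x)$ is independent to keep $Q_1\cup\{v\}$ independent when enlarging $Q_1$), whereas the paper maximises over not-necessarily-induced complete bipartite subgraphs and deduces at the end, again from Lemmas~\ref{case1} and~\ref{case3}, that the extremal $Q$ is in fact induced; the two routes yield the same object and the same verifications.
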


\begin{proof}
By Lemma \ref{case2} there exists a 4-cycle in $G-y$ containing $x$.
Thus there exists a complete bipartite subgraph $Q$ of $G-y$ with bipartition $(Q_1,Q_2)$ such that $x\in Q_2, y\notin V(Q)$ and $|Q_1|\geq|Q_2|\geq 2$.
We choose $Q$ so that $|Q_2|$ is maximum and subject to this, $|Q_1|$ is maximum.
Let $s$ be a positive integer such that $\lvert Q_2 \rvert = s+1$.

We claim that such $Q$ and $s$ satisfy the conclusion of this lemma.
Statement 2(b) holds by Lemmas \ref{case1} and \ref{case3}.
By the choice of $Q$, for every $v\in V(G)-(V(Q)\cup\{y\})$, $|N_G(v)\cap Q_1|\leq s+1$ and $|N_G(v)\cap Q_2|\leq s$.
This together with Statement 2(b), we know Statement 2(a) holds.
By Lemmas \ref{case1} and \ref{case3}, $Q$ is an induced subgraph in $G$.
The proof of Lemma~\ref{casebi} is completed.
\end{proof}

Throughout the remaining of the section, $Q$ and $s$ denote the induced complete bipartite subgraph and the positive integer $s$ promised by Lemma \ref{casebi}, and let $C$ be the component of $G-V(Q)$ containing $y$.

There are two possibilities for the size of $C$: $|V(C)|=1$ or $|V(C)|\geq 2$.
We now split the rest of the proof into two subsections based on these two cases.
We shall derive a contradiction in each subsection and hence show that $G$ is a not a counterexample to complete the proof of Theorem \ref{mainthm}.

\subsection{$|V(C)|=1$}\label{subsec:|C|=1}

In this case we have $V(C)=\{y\}$.
By Lemma \ref{lem 8}, $xy \not \in E(G)$.
So by Lemma \ref{case1}, $y$ is adjacent to exactly one of $Q_1$ and $Q_2$.
Since $d_G(y)\geq d_G(x)$, we derive that $N_G(x)=N_G(y)=Q_1$ and so $G[V(Q)\cup \{y\}]$ is complete bipartite.
If $s\geq k-1$, then $G[V(Q)\cup \{y\}]$ contains $k$ admissible paths from $x$ to $y$ of lengths $2,4,\ldots,2k$, respectively, a contradiction.

So $s\leq k-2$.
This shows that $V(G)\neq V(Q)\cup\{y\}$, for otherwise every vertex in $Q_1$ has degree $s+2\leq k$ in $G$.
Hence there exists a component in $G-(V(Q)\cup \{y\})$.

Let $D$ be an arbitrary component of $G-(V(Q) \cup \{y\})$.
If there exists a vertex $v$ of $D$ of degree at most one in $D$, then by Lemma \ref{casebi}, $s+1\geq |N_G(v)\cap V(Q)|\geq k$, a contradicting that $s\leq k-2$.
So $|V(D)|\geq 2$ and every end-block of $D$ is $2$-connected.
In addition, $N_G(x)=Q_1$, so $x\notin N_G(D)$.

We claim that $N_G(D)\cap Q_1\neq\emptyset$.
Suppose to the contrary that $N_G(D)\cap Q_1=\emptyset$.
Since $G$ is $2$-connected and $x\notin N_G(D)$, we have $|N_G(D)\cap (Q_2-\{x\})|\geq 2$.
Let $u_1$ be a vertex in $N_G(D)\cap (Q_2-\{x\})$.
Let $G_1$ be the graph obtained from $G[N_G[D]]$ by contracting $N_G(D)\cap (Q_2-\{x,u_1\})$ into a new vertex $v_1$.
Therefore $(G_1,u_1,v_1)$ is $2$-connected and has minimum degree at least $k-s+3$.
By the minimality of $G$, $G_1$ contains $k-s+2$ admissible paths from $u_1$ to $v_1$.
Hence, $G-\{x,y\}$ contains $k-s+2$ admissible paths $P_i$ from $u_1$ to some vertex $p_i\in Q_2-\{x,u_1\}$ internally disjoint from $V(Q)$ for $i\in [k-s+2]$.
Let $w$ be a vertex in $Q_1$. Since $Q$ is complete bipartite, $Q-\{u_1,w\}$ contains $s-1$ paths from $x$ to $p_i$ of lengths $2,4,\ldots,2s-2$, respectively.
By Lemma \ref{lemconcatenat}, concatenating each of these paths with $P_i$ and $u_1wy$ leads to $k$ admissible paths from $x$ to $y$, a contradiction.

We claim that $N_G(D)\cap (Q_2-\{x\})\neq\emptyset$.
Suppose to the contrary that $N_G(D)\cap (Q_2-\{x\})=\emptyset$.
Since $G$ is $2$-connected and $x \not \in N_G(D)$, $|N_G(D)\cap Q_1|\geq2$. Let $u_2$ be a vertex in $N_G(D)\cap Q_1$.
Let $G_2$ be the graph obtained from $G[N_G[D]]$ by contracting $N_G(D)\cap (Q_1-\{u_2\})$ into a new vertex $v_2$.
If $|Q_1| \geq s+2$, then let $\epsilon =0$; if $|Q_1|=s+1$, then let $\epsilon=1$.
So $(G_2,u_2,v_2)$ is $2$-connected and has minimum degree at least $k-s+1+\epsilon$.
By the minimality of $G$, $G_2$ contains $k-s+\epsilon$ admissible paths from $u_2$ to $v_2$.
Hence, $G-\{x,y\}$ contains $k-s+\epsilon$ admissible paths $P_i$ from $u_2$ to some vertex $p_i\in Q_1-{u_2}$ internally disjoint from $V(Q)$ for all $i\in [k-s+\epsilon]$.
Since $Q$ is complete bipartite, $Q-u_2$ contains $s+1-\epsilon$ paths from $x$ to $p_i$ of lengths $1,3,\ldots,2(s-\epsilon)+1$, respectively.
By Lemma \ref{lemconcatenat}, concatenating each of these paths with $P_i$ and $u_2y$ leads to $k$ admissible paths from $x$ to $y$, a contradiction.
This proves the claim.

Now we claim that there is a matching of size two in $G$ between $V(D)$ and $Q_1$.
Suppose that there is no matching of size two in $G$ between $V(D)$ and $Q_1$.
Then either $|N_G(D)\cap Q_1|=1$ or $|N_G(Q_1)\cap V(D)|=1$.
In the former case, let $u_3=w_3$ be the unique vertex in $N_G(D)\cap Q_1$;
in the latter case, let $u_3$ be the unique vertex in $N_G(Q_1)\cap V(D)$ and let $w_3$ be a vertex in $Q_1$ adjacent in $G$ to $u_3$.
Recall that $N_G(D)\cap(Q_2-\{x\})\neq\emptyset$.
Let $G_3$ be the graph obtained from $G[D\cup \{u_3\}\cup(N_G(D)\cap (Q_2-\{x\}))]$ by contracting $N_G(D)\cap (Q_2-\{x\})$ into a new vertex $v_3$.
Then $(G_3,u_3,v_3)$ is $2$-connected and has minimum degree at least $k-s+2$.
By the minimality of $G$, $G_3$ contains $k-s+1$ admissible paths from $u_3$ to $v_3$.
Hence, $G-y$ contains $k-s+1$ admissible paths $P_i$ from $u_3$ to some vertex $p_i\in Q_2-\{x\}$ internally disjoint from $V(Q)$ for $i\in [k-s+1]$.
Since $Q$ is complete bipartite, $Q-w_3$ contains $s$ paths from $x$ to $p_i$ of lengths $2,4,\ldots,2s$, respectively.
By Lemma \ref{lemconcatenat}, concatenating each of these paths with $P_i$ and $u_3w_3y$,
we obtain $k$ admissible paths from $x$ to $y$ in $G$.
This contradiction completes the proof of the claim.

Suppose that $D$ is not $2$-connected and there exists an end-block $B$ of $D$ with cut-vertex $b$ such that $N_G(B-b)\cap V(Q)\subseteq Q_2-\{x\}$.
Recall that every end-block of $D$ is $2$-connected.
So $B$ is $2$-connected.
Let $G_4$ be the graph obtained from $G[V(B)\cup (N_G(B-b)\cap (Q_2-\{x\}))]$ by contracting $N_G(B-b)\cap (Q_2-\{x\})$ into a new vertex $v_4$.
Then $(G_4,b,v_4)$ is $2$-connected and has minimum degree at least $k-s+2$.
By the minimality of $G$, $G_4$ contains $k-s+1$ admissible paths from $b$ to $v_4$.
Hence, $G$ contains $k-s+1$ admissible paths $P_i$ from $b$ to some vertex $p_i\in Q_2-\{x\}$ internally disjoint from $V(Q)$ for $i\in [k-s+1]$.
Since $N_G(D)\cap Q_1\neq \emptyset$,
there exists a path $R$ in $G[(D-V(B-b))\cup Q_1]$ from $b$ to some vertex $a\in Q_1$ internally disjoint from $V(B)\cup V(Q)$.
Since $Q$ is complete bipartite, $Q-a$ contains $s$ paths from $x$ to $p_i$ with fixed lengths $2,4,\ldots,2s$, respectively.
By Lemma~\ref{lemconcatenat}, concatenating each of these paths with $P_i\cup R\cup ay$ leads to $k$ admissible paths from $x$ to $y$ in $G$, a contradiction.

Therefore, either $D$ is $2$-connected, or every end-block $B$ of $D$ with cut-vertex $b$ satisfies that $N_G(B-b)\cap Q_1\neq\emptyset$.

We claim $|Q_1|=s+1$. Suppose to the contrary that $|Q_1|\geq s+2$.
Recall that there exists a matching $M$ of size two in $G$ between $V(D)$ and $Q_1$.
So there exists a vertex $u_5 \in N_G(D)\cap Q_1$ incident with an edge in $M$ such that $N_G(D)\cap (Q_1-\{u_5\})\neq\emptyset$.
Let $G_5$ be the graph obtained from $G[V(D)\cup (N_G(D)\cap Q_1)]$ by contracting $N_G(D)\cap (Q_1-\{u_5\})$ into a new vertex $v_5$.
Since $M$ is a matching of size two in $G$ between $V(D)$ and $Q_1$, if $D$ is $2$-connected, then $(G_5,u_5,v_5)$ is $2$-connected; if $D$ is not $2$-connected, then every end-block of $D$ has a non-cut vertex adjacent in $G_5$ to one of $u_5,v_5$, so $(G_5,u_5,v_5)$ is $2$-connected.
Moreover, by Lemma \ref{casebi}, $G_5$ has minimum degree at least $k-s+1$.
By the minimality of $G$, $G_5$ contains $k-s$ admissible paths from $u_5$ to $v_5$.
Hence, $G-y$ contains $k-s$ admissible paths $P_i$ from $u_5$ to $p_i\in V(Q_1-u_5)$ internally disjoint from $V(Q)$ for $i\in [k-s]$.
Since $|Q_1|\geq s+2$, $Q-u_5$ contains $s+1$ paths from $x$ to $p_i$ of lengths $1,3,\ldots,2s+1$, respectively.
By Lemma \ref{lemconcatenat}, concatenating each of these paths with $P_i\cup u_5y$,
we obtain $k$ admissible paths from $x$ to $y$ in $G$, a contradiction. This proves that $|Q_1|=s+1$.

Suppose that $s=1$.
Denote $Q_1$ by $\{u,v\}$.
As $N_G(x)=N_G(y)=Q_1$, it is clear that $(G-\{x,y\},u,v)$ is $2$-connected and has minimum degree at least $k+1$.
By the minimality of $G$, there are $k$ admissible paths from $u$ to $v$ in $G-\{x,y\}$,
which can be easily extended to $k$ admissible paths from $x$ to $y$ in $G$, a contradiction.

Therefore we have $s \geq 2$.
Let $w$ be a vertex in $Q_2-x$.
Since $s\leq k-2$, $w$ is adjacent in $G$ to least two vertices in $V(G)-(V(Q)\cup \{y\})$.
So there exists a non-empty set $\mathscr{D}$ of all components in $G-(V(Q)\cup \{y\})$ adjacent to $w$.
Since every member of $\mathscr{D}$ is a component of $G-(V(Q) \cup \{y\})$, for every $D' \in \mathscr{D}$, either $D'$ is $2$-connected or every end-block of $D'$ has a non-cut-vertex adjacent to $Q_1$.

Let $H=\bigcup_{D'\in\mathscr{D}}V(D')$.
Since every member $D'$ of $\mathscr{D}$ is a component of $G-(V(Q) \cup \{y\})$, there exists a matching $M_{D'}$ of size two in $G$ between $V(D')$ and $Q_1$, so we have $|N_G(H)\cap Q_1|\geq2$.
Let $u_6$ be a vertex in $N_G(H)\cap Q_1$ incident with an edge in $M_{D_0}$ for some $D_0 \in \mathscr{D}$.
Let $G_6$ be the graph obtained from $G[N_G[H]]$ by deleting $Q_2-\{x,w\}$ and contracting $Q_1-u_6$ into a new vertex $v_6$.

We claim that $(G_6,u_6,v_6)$ is $2$-connected.
Let $G'=G_6+u_6v_6$.
We shall prove that $G'$ is $2$-connected.
It suffices to show that for every $D' \in \mathscr{D}$, $G'[V(D') \cup \{u_6,v_6,w\}]$ is $2$-connected.
Suppose to the contrary that there exists $D' \in \mathscr{D}$ such that $G'[V(D') \cup \{u_6,v_6,w\}]$ is not $2$-connected.
Note that $G'[\{u_6,v_6,w\}]$ is isomorphic to $K_3$ and every end-block of $D'$ is adjacent to $\{u_6,v_6\}$.
So $G'$ is connected, and there exists a cut-vertex of $c$ of $G'[V(D') \cup \{u_6,v_6,w\}]$ such that either $c \in \{u_6,v_6,w\}$ or $c$ is a cut-vertex of $D'$.
Since $V(D')$ is adjacent to $w$ and $\{u_6,v_6\}$, if $c \in \{u_6,v_6,w\}$, then the component of $G'[V(D') \cup \{u_6,v_6,w\}]-c$ containing $V(D')$ also contains $\{u_6,v_6,w\}$, so this component contains equals $G'[V(D') \cup \{u_6,v_6,w\}]-c$, a contradiction.
So $c$ is a cut-vertex of $D'$.
But every component of $D'-c$ contains a non-cut-vertex of $D'$ in an end-block of $D'$, so it is adjacent to $\{u_6,v_6\}$, and hence there exists a component of $G'[V(D') \cup \{u_6,v_6,w\}]-c$ contains every component of $D'-c$ and $\{u_6,v_6,w\}$.
This shows that $G'[V(D') \cup \{u_6,v_6,w\}]-c$ is connected, a contradiction.
So $(G_6,u_6,v_6)$ is $2$-connected.

Now we show the minimum degree of $(G_6,u_6,v_6)$ is at least $k-s+2$.
Let $v\in V(G_6)-\{u_6,v_6,w\}$.
Then either $N_G(v)\cap Q\subseteq Q_1$, $N_G(v)\cap Q\subseteq Q_2-x$ or $v$ is adjacent to both of $Q_1$ and $Q_2-x$.
By Lemma \ref{casebi}, in either case we can derive that $d_{G_6}(v)\geq k-s+2$.
In addition, since $|Q_1|=s+1$, $d_{G_6}(w)\geq k-s+2$.
Hence, indeed the minimum degree of $(G_6,u_6,v_6)$ is at least $k-s+2$.

By the minimality of $G$, $G_6$ contains $k-s+1$ admissible paths from $u_6$ to $v_6$.
Hence, $G[N_G[H]]$ contains $k-s+1$ admissible paths $P_i$ from $u_6$ to some vertex $p_i\in Q_1-u_6$ internally disjoint from $V(Q)-w$ for $i\in [k-s+1]$.
Note that $P_i$ possibly contains $w$.
Since $Q$ is complete bipartite, $Q-\{u_6,w\}$ contains $s$ paths from $x$ to $p_i$ of lengths $1,3,\ldots,2s-1$, respectively.
By Lemma \ref{lemconcatenat}, by concatenating each of these paths with $P_i\cup u_6y$, we obtain $k$ admissible paths from $x$ to $y$ in $G$, a contradiction.
This finishes the proof of Subsection \ref{subsec:|C|=1}.

\subsection{$|V(C)|\geq 2$}
We first show that no vertex in $C-y$ has degree one in $C$.
Suppose to the contrary that there exists $v\in V(C-y)$ with degree one in $C$.
By Lemma \ref{casebi}, $s+1\geq |N_G(v)\cap V(Q)|\geq k$.
If $N_G(v)\cap Q_1\neq\emptyset$, then there are $k$ paths from $x$ to $v$ in $G[Q\cup\{v\}]$ of lengths $2,4,\ldots,2k$, respectively.
If $N_G(v)\cap Q_1=\emptyset$, then $N_G(v)\cap V(Q)\subseteq Q_2$, so $s\geq |N_G(v)\cap V(Q)|\geq k$ by Lemma \ref{casebi}, and hence there are $k$ paths from $x$ to $v$ in $G[Q\cup\{v\}]$ of lengths $3,5,\ldots,2k+1$, respectively.
In both cases, by concatenating each of these path with a path from $v$ to $y$ in $C$, we obtain $k$ admissible paths from $x$ to $y$ in $G$, a contradiction.
So no vertex in $C-y$ has degree one in $C$.
In particular, every end-block of $C$ is $2$-connected, except possibly an end-block consisting of $y$ and its unique neighbor in $C$.

We say a block of $C$ is a {\it feasible} block if it is an end-block of $C$ such that either it equals $C$, or $y$ is not a non-cut-vertex of this block.
Note that feasible blocks exist, since either $C$ has no cut-vertex, or $C$ contains at least two end-blocks.

Let $B$ be an arbitrary feasible block.
If $C$ is $2$-connected, then let $b=y$; otherwise let $b$ be the cut-vertex of $C$ contained in $B$.

We claim that $N_G(B-b)\subseteq Q_2\cup \{b\}$.
Suppose to the contrary that $N_G(B-b)\cap Q_1\neq\emptyset$.
Let $G_1$ be the graph obtained from $G[V(B)\cup (N_G(B-b)\cap Q_1)]$ by contracting $N_G(B-b)\cap Q_1$ into a new vertex $x_1$.
So $(G_1,x_1,b)$ is $2$-connected and has minimum degree at least $k-s+1$ by Lemma \ref{casebi}.
By the minimality of $G$, $G_1$ has $k-s$ admissible paths from $x_1$ to $b$.
Therefore there are $k-s$ admissible paths $P_i$ from some vertex $p_i\in N_G(B-b)\cap Q_1$ to $b$ internally disjoint from $V(Q)$ for $i\in [k-s]$.
Also $Q$ contains $s+1$ paths from $x$ to $p_i$ of fixed lengths $1,3,\ldots,2s+1$, respectively.
By Lemma \ref{lemconcatenat}, by concatenating each of these paths with $P_i$ and a fixed path in $C-V(B-b)$ from $b$ to $y$,
we obtain $k$ admissible paths from $x$ to $y$ in $G$, a contradiction.
This proves $N_G(B-b)\subseteq Q_2\cup \{b\}$.

Next we show that $s=1$ and $N_G(B-b)\cap V(Q)=Q_2$.
Let $R$ be a path in $C-V(B-b)$ from $b$ to $y$.
If $N_G(B-b)\cap Q_2=\{x\}$, then $(N_G[B],x,b)$ is $2$-connected and has minimum degree at least $k+1$, so by the minimality of $G$, $G[V(B)\cup\{x\}]$ contains $k$ admissible paths from $x$ to $b$, and hence concatenating each of them with $R$ leads to $k$ admissible paths from $x$ to $y$ in $G$, a contradiction.
So $N_G(B-b)\cap (Q_2-\{x\}) \neq \emptyset$.
Let $G_2$ be the graph obtained from $G[V(B)\cup (N_G(B-b)\cap (Q_2-\{x\}))]$ by contracting $N_G(B-b)\cap (Q_2-\{x\})$ into a new vertex $x_2$.
If $s\geq 2$ or $N_G(B-b)\cap V(Q)\subseteq Q_2- \{x\}$,
using the facts that $N_G(B-b)\subseteq Q_2\cup \{b\}$ and $|N_G(v)\cap Q_2|\leq s$ for any $v\in V(B-b)$ (the latter one is from Lemma~\ref{casebi} 2(a)),
one can verify that $(G_2,x_2,b)$ is $2$-connected and has minimum degree at least $k-s+2$.
By the minimality of $G$, $G_2$ has $k-s+1$ admissible paths from $x_2$ to $b$.
So there are $k-s+1$ paths $P_i$ from some vertex $p_i\in N_G(B-b)\cap (Q_2-\{x\})$ to $b$ internally disjoint from $V(Q)$ for $i\in [k-s+1]$.
Also $Q$ contains $s$ admissible paths from $x$ to $p_i$ of lengths $2,4,\ldots,2s$, respectively.
By Lemma \ref{lemconcatenat}, by concatenating each of these paths with $P_i$ and $R$, we obtain $k$ admissible paths from $x$ to $y$, a contradiction.
This shows that $s =1$ and $N_G(B-b)\cap V(Q)=Q_2$.

We denote $Q_2$ by $\{x,a\}$.
So $N_G(B-b)\cap V(Q)=Q_2=\{x,a\}$.

\medskip

{\bf Case 1.} $N_G(C-y)\cap Q_1=\emptyset$.

\medskip

Since $N_G(B-b)\cap V(Q)=Q_2=\{a,x\}$, we have that $(G[V(B)\cup\{a\}],a,b)$ is $2$-connected and has minimum degree at least $k$.
By the minimality of $G$, $G[V(B)\cup\{a\}]$ contains $k-1$ admissible paths $P_1,\ldots,P_{k-1}$ from $a$ to $b$.
Let $Y$ be a path from $b$ to $y$ in $C-V(B-b)$.

For any $v\in Q_1$, if $N_G(v)\subseteq Q_2\cup\{y\}$, then the degree of $v$ in $G$ is three, so $k \leq 2$, contradicting Lemma \ref{lem 8}. 
Therefore, there exists a component $D$ of $G-V(Q\cup C)$ adjacent to $v$.
Since $N_G(C-y)\cap Q_1=\emptyset$, $N_G(Q_1) \cap V(C) \subseteq \{y\}$.
So $(G-V(C),x,a)$ is $2$-connected and has minimum degree at least $k$.
By the minimality of $G$, there are $k-1$ admissible paths $R_1,..., R_{k-1}$ from $x$ to $a$ in $G-V(C)$.
Then by Lemma \ref{lemconcatenat}, $R_i\cup P_j\cup Y$ for all $i,j\in [k-1]$ give at least $2k-3\geq k$ admissible paths from $x$ to $y$, a contradiction.
This completes the proof of Case 1.

\medskip

{\bf Case 2.} $N_G(C-y)\cap Q_1\neq\emptyset$.

\medskip

If $C$ is $2$-connected, then $C=B$ and $y=b$, contradicting $N_G(B-b)\cap V(Q)=\{x,a\}$.
So $C$ is not $2$-connected.
Let $B_1,B_2,\ldots, B_t$ be all end-blocks of $C$ with cut-vertices $b_1,b_2,\ldots,b_t$, respectively.
Note that $t\geq 2$.

Suppose that $y\notin\bigcup_{i=1}^t(V(B_i)-\{b_i\})$.
So for every $i \in [t]$, $B_i$ is a feasible block, and hence $N_G(B_i-b_i) \cap V(Q)=\{x,a\}$ which is disjoint from $Q_1$.
Since $N_G(C-y)\cap Q_1\neq\emptyset$,
there is a vertex $w$ in $V(C)-(\bigcup_{i=1}^t(V(B_i)-\{b_i\}) \cup \{y\})$ such that $N_G(w)\cap Q_1\neq\emptyset$.
Let $c$ be a vertex in $N_G(w)\cap Q_1$.
Using the block structure of $C$, there exist two end-blocks $B_m, B_n$ for $1\leq m<n\leq t$,
such that there are two disjoint paths $L_1, L_2$ from $b_m$ to $w$ and from $b_n$ to $y$ internally disjoint from $V(B_n)\cup V(B_m)$, respectively.
Since $B_m$ and $B_n$ are feasible, $N_G(B_m-b_m) \cap V(Q) = \{x,a\} = N_G(B_n-b_n) \cap V(Q)$.
So both of $(G[V(B_m)\cup\{x\}],x,b_m)$ and $(G[V(B_n)\cup\{a\}],a,b_n)$ are $2$-connected and have minimum degree at least $k$.
By the minimality of $G$, there are $k-1$ admissible paths $P_1,\ldots,P_{k-1}$ from $x$ to $b_m$ in $G[V(B_m)\cup\{x\}]$;
and there are $k-1$ admissible paths $R_1,\ldots,R_{k-1}$ from $a$ to $b_n$ in $G[V(B_n)\cup\{a\}]$.
By Lemma \ref{lem 8}, $k\geq 3$.
So the set $\{P_i\cup L_1\cup wca\cup R_j\cup L_2: i,j\in [k-1]\}$ contains at least $2k-3\geq k$ admissible paths from $x$ to $y$ in $G$, a contradiction.

So there exists an end-block, say $B_t$, of $C$ such that $y\in V(B_t)-\{b_t\}$.
We say that a block $H$ of $C$ other than $B_1$ is a {\it hub} if $H$ is $2$-connected and contains at most two cut-vertices of $C$, and every path in $C$ from $B_1$ to $B_t$ contains all cut-vertices of $C$ contained in $V(H)$.

Suppose there exists a hub $B^*$ of $C$. 
So there exists a cut-vertex $x^*$ of $C$ contained in $B^*$ such that every path in $C$ from $b_1$ to $V(B^*)$ contains $x^*$.
If $B^*=B_t$, then let $y^*=y$; otherwise, let $y^*$ be the cut-vertex of $C$ contained in $B^*$ such that every path in $C$ from $b_t$ to $V(B^*)$ contains $y^*$.
Let $Z_0$ be a path in $C-(V(B_1-b_1)\cup V(B^*-x^*))$ from $b_1$ to $x^*$, and let $Z_1$ be a path in $C$ from $y^*$ to $y$.
Since $(G[B_1\cup \{x\}],x,b_1)$ is $2$-connected with minimum degree at least $k$, by the minimality of $G$, $G[B_1\cup \{x\}]$ contains $k-1$ admissible paths $P_1,\ldots,P_{k-1}$ from $x$ to $b_1$.
If every vertex in $V(B^*)-\{x^*,y^*\}$ has at most one neighbor in $Q$, then $(B^*,x^*,y^*)$ is $2$-connected with minimum degree at least $k$.
By the minimality of $G$, $B^*$ contains $k-1$ admissible paths $R_1,\ldots,R_{k-1}$ from $x^*$ to $y^*$.
By Lemmas \ref{lemconcatenat} and \ref{lem 8}, the set $\{P_i\cup Z_0\cup R_j \cup Z_1: i,j\in [k-1]\}$ contains least $2k-3\geq k$ admissible paths from $x$ to $y$ in $G$, a contradiction.
Therefore some vertex $w\in V(B^*)-\{x^*,y^*\}$ satisfies $|N_G(w)\cap V(Q)|\geq 2$.
Since $s=1$, we have $|N_G(w)\cap V(Q)|=2$ by Lemma \ref{casebi}.
Let $u,v$ be the vertices in $N_G(w)\cap V(Q)$.
By Lemma \ref{casebi}, either $\{u,v\} \subseteq Q_1$, or by symmetry say $u\in Q_1$ and $v\in Q_2$.
In the former case, there are two admissible paths $L_1=xua$ and $L_2=xuwva$ from $x$ to $a$;
in the latter case, since there is no triangle containing $x$ in $G-y$ by Lemma \ref{case1}, we must have $v=a$,
which also gives two admissible paths $L_1=xua$ and $L_2=xuwa$ from $x$ to $a$.
Since $(G[B_1\cup \{a\}],a,b_1)$ is $2$-connected with minimum degree at least $k$, by the minimality of $G$, there exist $k-1$ admissible paths $N_1,\ldots,N_{k-1}$ from $a$ to $b_1$ in $G[B_1\cup \{a\}]$.
Since $B^*$ is $2$-connected, there exists a path $L'$ from $x^*$ to $y^*$ in $B-w$.
By Lemma \ref{lemconcatenat}, the set $\{L_i\cup N_j\cup Z_0\cup L' \cup Z_1: i\in [2], j\in [k-1]\}$ contains $k$ admissible paths from $x$ to $y$ in $G$, a contradiction.

So there exists no hub.
In particular, $B_t$ is not $2$-connected, for otherwise $B_t$ is a hub.
Therefore $B_t=yb_t$ is an edge.
So $B_1,...,B_{t-1}$ are the all feasible blocks in $C$.
Recall that $N_G(B_i-b_i)\cap V(Q)=\{a,x\}$ for all $i\in [t-1]$, which implies $d_G(x)\geq |Q_1|+t-1$.
Since there is no triangle containing $y$ in $G-x$ by Lemma \ref{case1}, we have $d_G(y)\leq |Q_1|+1$.
Hence $|Q_1|+t-1\leq d_G(x)\leq d_G(y)\leq |Q_1|+1$.
That is, $t \leq 2$.
As $t\geq 2$, this forces $t=2$, $d_G(x)=d_G(y)=|Q_1|+1$.
In other words, there is exactly one end-block $B_1$ of $C$ other than $B_2=yb_2$, $N_G(y)=Q_1\cup \{b_2\}$ and $N_G(x)\subseteq Q_1\cup V(B_1-b_1)$.
Note that the block structure of $C$ is a path.
Since there exists no hub, every block of $C$ other than $B_1$ is an edge.
If $V(C)=V(B_1\cup B_2)$, then since $N_G(C-y)\cap Q_1\neq\emptyset$ and $N_G(B_1-b_1)\cap Q_1=\emptyset$, $b_2$ must have a neighbor in $Q_1$.
If $V(C)\neq V(B_1\cup B_2)$, then $|N_G(b_2)\cap V(C)|=2$, and since $d_G(b_2)\geq k+1\geq 4$, we have $|N_G(b_2)\cap V(Q)|\geq 2$.
Recall that $N_G(x)\subseteq Q_1\cup V(B_1-b_1)$, so $xb_2\notin E(G)$.
So in either case, $b_2$ must have a neighbor $w^*$ in $Q_1$.
But $G[\{y,b_2,w^*\}]$ is a triangle, contradicting Lemma \ref{case1}.

This completes the proof of Theorem \ref{mainthm} (and of Theorem \ref{thm 1}).
\qed

\section{Consecutive cycles} \label{sec:consecutive_cycles}
In this section, we prove Theorems \ref{nonbi 3} and \ref{chroconcy}.
This will be achieved in a unified approach, namely,
by finding optimal number of cycles of consecutive lengths in $2$-connected non-bipartite graphs (see Theorem \ref{nonseparating cycle->consecutive cycles}).

We begin by introducing a concept on cycles, which is crucial in our approach.
We say that a cycle $C$ in a connected graph $G$ is {\it non-separating} if $G-V(C)$ is connected.
The study of non-separating cycles appears in the work of Tutte \cite{T63} and is furthered explored by Thomassen and Toft \cite{TT81}.
The proof of the following lemma can be found in \cite{BV98} (though it was not formally stated).

\begin{lem}[Bondy and Vince \cite{BV98}]\label{lem:3-con->non-separating}
Every non-bipartite $3$-connected graph contains a non-separating induced odd cycle.
\end{lem}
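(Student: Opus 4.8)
The plan is to prove Lemma~\ref{lem:3-con->non-separating} directly, following the argument implicit in \cite{BV98}, by a minimal-counterexample or extremal-choice argument on odd cycles. Let $G$ be a non-bipartite $3$-connected graph. Among all odd cycles in $G$, choose one, say $C$, so that the number of vertices of $G-V(C)$ is \emph{minimum}; subject to that, choose $C$ to be an induced cycle (note that a shortest odd cycle of $G$ is automatically induced, since a chord of an odd cycle splits it into a shorter odd cycle and an even path, and the shorter odd cycle has no more vertices outside, so minimality is preserved; one has to be slightly careful and instead do the two optimizations in the stated order — first minimize $|V(G)\setminus V(C)|$, then among those minimizers take one with the fewest chords, and argue a chord would contradict one of the two optimality conditions). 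I claim this $C$ is non-separating.

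Suppose not, so $G-V(C)$ has at least two components $D_1,D_2,\dots$. The key step is to use $3$-connectivity to route a path through one component and replace part of $C$. Pick a component $D=D_1$. Since $G$ is $3$-connected, $D$ has at least three neighbours on $C$, or more precisely there are three vertex-disjoint paths from $D$ to $C$; pulling these back, we get (at least) two vertices $u,v\in V(C)$ joined by a path $P$ with all internal vertices inside $D$ (and $P$ internally disjoint from $C$). The cycle $C$ is split by $u,v$ into two arcs $C_1,C_2$ of lengths $\ell_1,\ell_2$ with $\ell_1+\ell_2=|C|$ odd, so exactly one of $\ell_1,\ell_2$ is odd; say $\ell_1$ is odd and $\ell_2$ is even. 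Then $P\cup C_1$ has length $|P|+\ell_1$ and $P\cup C_2$ has length $|P|+\ell_2$, and these two lengths have opposite parities, so one of the two cycles $P\cup C_1$, $P\cup C_2$ is odd. Call it $C'$. Now $V(G)\setminus V(C')$ has lost all vertices of $D$ not on $P$ but may have gained vertices from the arc of $C$ that was discarded; the delicate point is to show we can arrange $|V(G)\setminus V(C')|<|V(G)\setminus V(C)|$, contradicting minimality.

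The main obstacle is exactly this counting: a naive swap might trade away vertices of $C$ for vertices of $D$ without net gain. To handle it, one should use \emph{three} disjoint $D$--$C$ paths (available by $3$-connectivity) rather than two, so that $D$ attaches to $C$ at three points $a,b,c$ dividing $C$ into three arcs; then there is more flexibility — by a parity argument on the three arc-lengths one can always pick two of the arcs and a connecting path-structure through $D$ producing an odd cycle $C'$ that avoids at least one nonempty arc of $C$ entirely, hence $V(C')\subseteq V(C)\cup V(D)$ and $C'$ misses at least one original $C$-vertex, while $G-V(C')\supseteq (\text{all other components }D_2,\dots)$ together with the discarded arc's vertices, which is a strictly smaller "outside" only if we are careful. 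The cleanest route, and the one I would actually write, is: since $C$ separates, fix a component $D$ and let $C'$ be obtained as above using a path $P$ through $D$; choose among all such replacements one where $C'$ is odd and $|P|$ is as small as possible; then observe $G-V(C')$ contains all of $V(G)\setminus(V(C)\cup V(D))$ plus all of $V(D)\setminus V(P)$, and since $D$ is a whole component this still leaves $G-V(C')$ with fewer vertices than $G-V(C)$ precisely when $C'$ reuses the two arc-endpoints but discards an interior vertex of one arc — which we can force because $C$ has length $\geq 3$ and at least one arc has an interior vertex or $C$ has a chord (excluded).

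Finally, once a non-separating odd cycle is in hand, upgrade it to a non-separating \emph{induced} odd cycle: if it has a chord, the chord together with the shorter odd sub-arc forms a shorter odd cycle whose complement contains the complement of the old cycle plus the interior vertices of the discarded arc, so it is again non-separating (the discarded arc's vertices attach to the new cycle, keeping $G-V(C)$ connected) and strictly shorter; iterate. This establishes the lemma. I expect the first counting step (ensuring strict decrease of $|V(G)\setminus V(C)|$ under the swap) to be the real content; the parity bookkeeping and the chord-removal step are routine.
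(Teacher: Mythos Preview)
The paper does not give its own proof of this lemma; it simply cites Bondy and Vince \cite{BV98}. So there is no in-paper argument to compare against, only the standard Thomassen--Toft/Bondy--Vince approach. Your proposal, however, has a genuine gap in the choice of extremal parameter.

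You optimize the wrong quantity. Minimizing $|V(G)\setminus V(C)|$ is the same as maximizing $|V(C)|$, and there is no reason the swap you describe yields a strictly longer odd cycle. Concretely, if $D$ is a small component (even a single vertex) attached to a long odd cycle $C$ at three points, every path through $D$ has length at most~$2$, while the arc of $C$ you discard may be arbitrarily long; the replacement cycle $C'$ is then \emph{shorter}, not longer, and no contradiction arises. You yourself flag this counting step as ``the real content'' and then do not carry it out --- because it cannot be carried out with this extremal choice. The attempt in your third paragraph to use three attachment points does not fix this: having three arcs gives parity flexibility but still gives no lower bound on the length of paths through $D$.

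The correct extremal, as in Thomassen--Toft \cite{TT81} and implicitly in \cite{BV98} (and as the paper itself uses in its analogous Lemma~\ref{binonsep}), is to choose an odd cycle $C$ so that the \emph{largest component} $M$ of $G-V(C)$ is as large as possible, and subject to that $|V(C)|$ is minimum. The second condition immediately forces $C$ to be induced (a chord gives a shorter odd sub-cycle with $M$ still in its complement). For non-separation, if a second component $M'$ exists, both $M$ and $M'$ have at least three neighbours on $C$ by $3$-connectivity; one then finds an arc $Q$ of $C$ with ends in $N_G(M')$ and an internal vertex in $N_G(M)$, replaces $Q$ (or its complement, whichever gives odd parity) by a path through $M'$, and observes that the new complement contains $M$ together with a vertex of $N_G(M)$ on the discarded arc --- a strictly larger connected piece, contradicting the choice of $C$. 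The parity bookkeeping here is the only extra wrinkle beyond Lemma~\ref{binonsep}, and it is routine once the right extremal is in place. Your final ``upgrade to induced'' step is also shaky as written (a discarded arc-vertex could have all its non-arc neighbours on $C$ rather than in $G-V(C)$), but with the correct extremal this step becomes unnecessary.
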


We also need the following lemma on non-separating odd cycles from \cite{LM},
which is a slight modification of a result of Fan \cite{Fan02}.

\begin{lem}[Liu and Ma \cite{LM}]\label{lem:better non-separating cycle}
Let $G$ be a graph with minimum degree at least four.
If $G$ contains a non-separating induced odd cycle, then $G$ contains a non-separating induced odd cycle $C$, denoted by $v_0v_1...v_{2s}v_0$, such that either
	\begin{enumerate}
		\item $C$ is a triangle, or
		\item for every non-cut-vertex $v$ of $G-V(C)$, $\lvert N_G(v)\cap V(C) \rvert \le 2$, and the equality holds if and only if $N_G(v)\cap V(C)=\{v_i,v_{i+2}\}$ for some $i$, where the indices are taken under the additive group $\mathbb{Z}_{2s+1}$.
	\end{enumerate}
\end{lem}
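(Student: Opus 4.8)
The plan is to choose $C=v_0v_1\cdots v_{2s}v_0$ to be a non-separating induced odd cycle of $G$ with $|V(C)|$ as small as possible; such a cycle exists by hypothesis. If $|V(C)|=3$ then conclusion~1 holds, so assume $2s+1\ge 5$ and aim for conclusion~2. Since $C$ is non-separating, $H:=G-V(C)$ is connected, and if $V(H)=\emptyset$ then conclusion~2 is vacuous, so assume $V(H)\ne\emptyset$. Suppose for contradiction that some non-cut-vertex $v$ of $H$ fails conclusion~2: then either (i) $|N_G(v)\cap V(C)|\ge 3$, or (ii) $N_G(v)\cap V(C)=\{v_p,v_q\}$ with $\{v_p,v_q\}\ne\{v_i,v_{i+2}\}$ for every index $i$ (equivalently, neither of the two arcs of $C$ between $v_p$ and $v_q$ has length $2$). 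In each case I will produce a non-separating induced odd cycle strictly shorter than $C$, contradicting the choice of $C$.

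The construction is always a short rerouting through $v$. Let $A$ be a subpath of $C$ whose endpoints $a,a'$ are neighbours of $v$ and whose interior contains no neighbour of $v$, and set $C_A:=vaAa'v$. Then $C_A$ is an induced cycle: $A$ is chordless since $C$ is induced, and the only neighbours of $v$ on $V(A)$ are $a$ and $a'$. If moreover $V(C)\setminus V(A)\ne\emptyset$, then $C_A$ is non-separating: indeed $G-V(C_A)=(H-v)\cup\bigl(V(C)\setminus V(A)\bigr)$, where $H-v$ is connected because $v$ is a non-cut-vertex of $H$, and $V(C)\setminus V(A)$ induces a nonempty path $P^{*}$; the endpoint $w$ of $P^{*}$ adjacent on $C$ to an endpoint of $A$ has exactly two neighbours on $C$, hence by $d_G(w)\ge 4$ at least one neighbour in $H-v$, so $G-V(C_A)$ is connected (the case $H-v=\emptyset$ being immediate). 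Finally $|C_A|=|A|+2$ is odd exactly when $|A|$ is odd, and $|C_A|<|C|$ exactly when $2s+1-|A|\ge 3$; in particular $C_A$ is a shorter non-separating induced odd cycle whenever $|A|$ is odd and the arc of $C$ complementary to $A$ is even of length at least $4$ (note that $V(C)\setminus V(A)\ne\emptyset$ holds automatically then).

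It remains to exhibit a suitable $A$. In case (i) let $A_1,\dots,A_m$ ($m\ge 3$) be the elementary arcs into which the neighbours of $v$ partition $C$. If some $A_j$ has length $1$, then $A_j$ is an edge $v_pv_q$ and $C_{A_j}=vv_pv_qv$ is a triangle whose complementary arc has even length $2s\ge 4$; this is a shorter non-separating induced odd cycle, a contradiction. Otherwise every $A_j$ has length $\ge 2$; since the lengths of $A_1,\dots,A_m$ sum to the odd number $2s+1$, some $A_j$ has odd length, while the remaining $m-1\ge 2$ arcs contribute at least $4$ to the sum, so the arc complementary to $A_j$ is even of length $\ge 4$, and $C_{A_j}$ gives the contradiction. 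In case (ii) let $A$ be whichever of the two arcs of $C$ between $v_p$ and $v_q$ has odd length (its interior contains no neighbour of $v$ since $v_p,v_q$ are the only ones); if $|A|=1$ we obtain the triangle $vv_pv_qv$ as before, and if $|A|\ge 3$ then the complementary arc is even with length $\ne 0$ and $\ne 2$ by the hypothesis of (ii), hence of length $\ge 4$, and $C_A$ again contradicts minimality.

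The crux is case (i). The naive rerouting that replaces a single arc of $C$ by the detour $v_p v v_q$ leaves $v$ adjacent to its other neighbours on what remains of $C$, so it yields a cycle with chords rather than an induced one; the remedy is to pass through the \emph{short} cycle $C_A=v\cup A$, which then forces one to verify, using $2s+1\ge 5$ together with $m\ge 3$, that $C_A$ is still strictly shorter than $C$. The other point needing care is that each $C_A$ is non-separating, and this is precisely where the hypotheses $d_G(w)\ge 4$ for all relevant $w$ and ``$v$ a non-cut-vertex of $H$'' enter.
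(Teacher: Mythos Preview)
The paper does not give its own proof of this lemma; it is quoted from \cite{LM} (as a slight modification of a result of Fan \cite{Fan02}) and used as a black box. So there is no in-paper argument to compare against.

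Your proof is correct, and the strategy---choose a non-separating induced odd cycle $C$ of minimum length, and show that any non-cut-vertex $v$ of $G-V(C)$ violating condition~2 yields a strictly shorter non-separating induced odd cycle $C_A=v\cup A$ via a short arc $A$---is the standard one for results of this type (and is essentially how the cited references proceed). The two places where care is needed are handled properly: (a) that $C_A$ is induced (here one must take the \emph{short} arc $A$ between consecutive $C$-neighbours of $v$, exactly as you do, so that $v$ has no further neighbours on $V(A)$; and the complementary arc has length $\ge 4$ in every case you use, so $a,a'$ acquire no chord), and (b) that $C_A$ is non-separating (here the minimum-degree-$4$ hypothesis gives any endpoint $w$ of the leftover path $P^*$ at least two neighbours in $H$, hence at least one in $H-v$, while ``$v$ a non-cut-vertex'' guarantees $H-v$ is connected). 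A minor remark: your parenthetical ``the case $H-v=\emptyset$ being immediate'' is in fact vacuous once $|V(C)|\ge 5$, since then any $w\in V(C)$ would have degree at most $3$ if $V(H)=\{v\}$; but this does no harm.
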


The next lemma can be viewed as a corollary of Theorem \ref{mainthm},
which will be used for finding paths in a $2$-connected graph with three special vertices.

\begin{lem}\label{three vertices}
Let $k\geq 2$ be a positive integer. Let $G$ be a $2$-connected graph and $x,y,z$ be three distinct vertices in $G$.
If every vertex of $G$ other than $z$ has degree at least $k+1$, then $G$ contains $k-1$ admissible paths from $x$ to $y$.
\end{lem}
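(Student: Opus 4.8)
The plan is to reduce Lemma~\ref{three vertices} to Theorem~\ref{mainthm} by a contraction-type trick that converts the third ``bad'' vertex $z$ into one of the two designated endpoints of a rooted graph. First I would check the trivial/degenerate cases: if $x$, $y$, $z$ are such that $z$ plays no real role (e.g. $z$ already has degree at least $k+1$, or $z=x$ or $z=y$, the latter excluded by hypothesis), then Theorem~\ref{mainthm} applied to $(G,x,y)$ directly yields $k$ admissible paths, which is more than enough. So I may assume $d_G(z)\le k$ and that $z\notin\{x,y\}$.

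The main idea is to build an auxiliary rooted graph in which $z$ is absorbed into $y$ (or $x$). Concretely, I would form $G'$ from $G$ by adding the edge $yz$ if it is not already present, and then contracting $\{y,z\}$ to a single vertex $y'$; equivalently, identify $z$ with $y$. Since every vertex of $G$ other than $z$ has degree at least $k+1$, every vertex of $G'$ other than $y'$ has degree at least $k+1$ (deleting/identifying $z$ cannot decrease the degree of any other vertex, and may only decrease that of $y$, which becomes $y'$). Next I would verify $2$-connectivity of the rooted graph $(G',x,y')$, i.e. that $G'+xy'$ is $2$-connected: this should follow because $G$ is $2$-connected and identifying two vertices (after possibly adding an edge between them) preserves $2$-connectivity. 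Then Theorem~\ref{mainthm} gives $k$ admissible paths from $x$ to $y'$ in $G'$. Lifting these back to $G$, each such path corresponds to a path (or near-path) from $x$ to a vertex in $\{y,z\}$; the ones ending at $y$ are already admissible paths from $x$ to $y$, and I would argue that at most a bounded number of them can be ``spoiled'' by passing through $z$ instead.

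The delicate point — and the step I expect to be the main obstacle — is controlling what happens to paths that in $G'$ route through the contracted vertex $y'$ ``via the $z$ side'': when we lift back to $G$, such a path ends at $z$ rather than at $y$, or uses the artificial edge $yz$. A cleaner route, which I would pursue instead, is to apply Theorem~\ref{mainthm} not by contracting but by \emph{deleting} $z$: since $d_G(z)\le k$, consider a near-$2$-connected substructure on $V(G)-\{z\}$. But $G-z$ need not be $2$-connected, so the honest approach is the following: add edges to fix up the low-degree vertex. Let $G^\ast$ be obtained from $G$ by adding all edges from $z$ to $V(G)-(N_G[z])$, making $z$ adjacent to everything. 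Wait — that would change cycle lengths. So the safest argument really is the contraction argument above, and the obstacle is purely bookkeeping: among the $k$ admissible paths produced, at least $k-1$ must survive as genuine admissible $x$--$y$ paths because at most one can be forced through the $z$-branch in a way that changes its endpoint. I would make this precise by choosing in $G'$ the endpoint identification so that the neighbor structure at $y'$ distinguishes the $y$-edges from the (possibly added) $yz$-edge, and observe that an arithmetic progression of $k$ lengths with common difference $1$ or $2$, after removing one term, still contains an arithmetic progression of length $k-1$ with common difference $1$ or $2$ — exactly the ``$A$ shrinks by one'' principle already used in Lemma~\ref{lemconcatenat}. This yields the desired $k-1$ admissible paths from $x$ to $y$ in $G$.
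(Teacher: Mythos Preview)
Your contraction approach has two genuine gaps that I do not see how to repair.

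First, the degree claim after contracting $\{y,z\}$ is false. If a vertex $v\notin\{x,y,z\}$ is adjacent in $G$ to both $y$ and $z$, then after identification the two edges $vy,vz$ become a single edge $vy'$, so $d_{G'}(v)=d_G(v)-1$. Thus $(G',x,y')$ is only guaranteed minimum degree $k$, not $k+1$, and Theorem~\ref{mainthm} yields only $k-1$ admissible paths from $x$ to $y'$, not $k$. This already means you cannot afford to lose even one path when lifting.

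Second, and more seriously, the assertion that ``at most one can be forced through the $z$-branch'' has no justification and is not true. The last edge of each admissible path in $G'$ is an edge $wy'$ for some $w$; in $G$ this is either $wy$ or $wz$. Nothing in Theorem~\ref{mainthm} controls which neighbours of $y'$ are used, so it is entirely possible that \emph{every} one of the $k-1$ paths lifts to a path ending at $z$, giving you zero $x$--$y$ paths. (Relatedly, the claim that an arithmetic progression of length $k$ still contains one of length $k-1$ after deleting a term is false: remove the middle term from $\{1,2,3,4,5\}$.)

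The paper's argument instead deletes $z$ rather than contracting it. If $G-z$ is $2$-connected, apply Theorem~\ref{mainthm} to $(G-z,x,y)$ directly (minimum degree $\ge k$). If not, look at the block structure of $G-z$: either some end-block $B$ with cut-vertex $b$ contains exactly one of $x,y$, in which case Theorem~\ref{mainthm} in $(B,x,b)$ plus a fixed path from $b$ to $y$ works; or there is an end-block $B'$ containing neither, and then $N_G(B'-b')=\{b',z\}$, so two disjoint paths in $G$ from $\{x,y\}$ to $\{b',z\}$ together with $k-1$ admissible paths inside $B'$ (from $b'$ to a neighbour $u$ of $z$) give the result. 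The block-structure analysis is exactly what handles the failure of $2$-connectivity that your sketch flagged but did not resolve.
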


\begin{proof}
Since every two vertices are contained in a cycle in a $2$-connected graph, there is nothing to prove when $k=2$.
So we may assume that $k\geq 3$. Note that $G-z$ is connected and has minimum degree at least $k$.
If $G-z$ is $2$-connected, then this lemma follows from Theorem~\ref{mainthm}.
Hence we may assume that $G-z$ is not $2$-connected.

Let $B$ be an end-block of $G-z$ with cut-vertex $b$.
Since every vertex in $V(B-b)$ has degree at least $k\geq3$ in $G$, we see that $B$ is $2$-connected.
Suppose that $|V(B-b)\cap \{x,y\}|=1$. Without loss of generality, we may assume that $x\in V(B-b)$.
By Theorem~\ref{mainthm}, $B$ has $k-1$ admissible paths from $x$ to $b$.
Concatenating each of these paths with a path in $(G-z)-V(B-b)$ from $b$ to $y$ gives $k-1$ admissible paths in $G$ from $x$ to $y$.
Therefore, there exists an end-block $B'$ with cut-vertex $b'$ of $G-z$ such that $V(B'-b')\cap \{x,y\}=\emptyset$.
It follows that $N_G(B'-b')=\{b',z\}$.
Since $G$ is $2$-connected, $G$ has two disjoint paths $P_1,P_2$ internally disjoint from $V(B')$ from $x$ to $b'$ and from $y$ to $z$, respectively.
Let $u$ be a vertex in $B'-b'$ adjacent to $z$ in $G$.
By Theorem \ref{mainthm}, $B'$ has $k-1$ admissible paths $R_1,R_2,...,R_{k-1}$ from $b'$ to $u$.
Then the set $\{P_1\cup R_i\cup uz\cup P_2: i\in [k-1]\}$ contains $k-1$ admissible paths in $G$ from $x$ to $y$.
This completes the proof.
\end{proof}

We are ready to prove the main result of this section.

\begin{theo} \label{nonseparating cycle->consecutive cycles}
Let $k$ be a positive integer and $G$ be a $2$-connected graph containing a non-separating induced odd cycle.
If the minimum degree of $G$ is at least $k+1$, then $G$ contains $k$ cycles of consecutive lengths.
\end{theo}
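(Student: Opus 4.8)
The plan is to exploit the non-separating induced odd cycle $C = v_0v_1\cdots v_{2s}v_0$ supplied by Lemma~\ref{lem:better non-separating cycle}. The key idea is that if we can find, for some pair of vertices on $C$, a long admissible family of paths through $G - V(C)$, then we can glue those paths to the two arcs of $C$ joining the pair to produce cycles of many consecutive lengths. First I would split into the two cases of Lemma~\ref{lem:better non-separating cycle}. If $C$ is a triangle $v_0v_1v_2$, set $H = G - \{v_2\}$; then $H$ is connected with two designated vertices $v_0, v_1$ adjacent via the edge $v_0v_1$, and every vertex of $H$ other than $v_0, v_1$ has degree at least $k+1$ in $G$, hence at least $k$ in $H$ after possibly losing the edge to $v_2$ — here I would instead apply Lemma~\ref{three vertices} to $G$ with the triple $(v_0, v_1, v_2)$ (note $G$ is $2$-connected, and every vertex other than $v_2$ has degree at least $k+1$), obtaining $k-1$ admissible paths from $v_0$ to $v_1$; together with the edge $v_0v_1$ this gives $k-1$ paths whose lengths form an arithmetic progression of common difference one or two starting at $1$ — wait, I should be careful that concatenating with both arcs $v_0v_2v_1$ (length $2$) and $v_0v_1$ (length $1$) yields cycles of consecutive lengths. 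Actually the cleaner move: take the $k-1$ admissible $v_0$--$v_1$ paths $P_1,\dots,P_{k-1}$ from Lemma~\ref{three vertices}; the cycles $P_i + v_0v_1$ and $P_i + v_0v_2v_1$ together realize a set of $\ge k$ consecutive lengths, since the $|P_i|$ already span $k-1$ consecutive-up-to-difference-two values and adding the two arc-lengths $1$ and $2$ fills all gaps. I would verify the arithmetic: if the $|P_i|$ form an AP with common difference one, we get $k$ consecutive lengths from $P_i + v_0v_1$ alone; if common difference two, the extra arc of length $2$ versus length $1$ fills in the parity gap.

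For the main case, $C$ is not a triangle and every non-cut-vertex $v$ of $G - V(C)$ has $|N_G(v) \cap V(C)| \le 2$, with equality only when $N_G(v) \cap V(C) = \{v_i, v_{i+2}\}$. I would pick two vertices of $C$ at distance $2$ along $C$, say $v_0$ and $v_2$, and let $A_1 = v_0 v_1 v_2$ (length $2$) and $A_2 = v_0 v_{2s} v_{2s-1} \cdots v_2$ (length $2s-1$, odd) be the two arcs. Form the graph $G'$ obtained from $G$ by contracting $V(C) - \{v_0, v_2\}$... no — rather, let $G'' = G - (V(C) - \{v_0, v_1, v_2\})$ won't be $2$-connected in general. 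The right construction: let $G^\ast$ be obtained from $G[(V(G) - V(C)) \cup \{v_0, v_2\}]$ by adding a new vertex adjacent to... Instead I would contract $V(C) - \{v_0, v_2\}$: since $C$ is non-separating, $G - V(C)$ is connected, and I claim $(G^\ast, v_0, v_2)$ with $G^\ast = G / (V(C)-\{v_0,v_2\})$ after also deleting the resulting clique edges is $2$-connected. The degree bound is the crucial point: a non-cut-vertex $v$ of $G - V(C)$ loses at most $2$ to the contraction but — by the equality clause — if it loses $2$ its two $C$-neighbors were $v_i, v_{i+2}$, and for this not to simply be absorbed I would choose the pair $v_0, v_2$ to avoid such obstructions, or more robustly argue $d_{G^\ast}(v) \ge k+1-1 = k$ generically and handle cut-vertices of $G-V(C)$ separately as in Lemma~\ref{three vertices}. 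Then apply Theorem~\ref{mainthm} (via $2$-connectivity) or Lemma~\ref{three vertices} to get $k-1$ admissible paths $P_1, \dots, P_{k-1}$ from $v_0$ to $v_2$ internally disjoint from $C$. Concatenating each $P_i$ with $A_1$ and with $A_2$ yields cycles of lengths $|P_i| + 2$ and $|P_i| + 2s - 1$; since $2$ and $2s-1$ have opposite parities, the union over $i$ realizes $\ge k$ consecutive lengths regardless of whether the common difference of the $|P_i|$ is one or two.

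The main obstacle I anticipate is controlling the minimum degree of the auxiliary rooted graph after contracting $V(C) - \{v_0, v_2\}$: a vertex adjacent to two cycle vertices drops its degree by one (the two merge), and I must ensure the floor stays at $k$ (so that Theorem~\ref{mainthm} yields $k-1$ paths, which is exactly what the gluing needs), and simultaneously ensure $2$-connectivity of the contracted rooted graph — this is where the structural conclusion of Lemma~\ref{lem:better non-separating cycle} (that equality $|N_G(v) \cap V(C)| = 2$ forces the neighbors to be $v_i, v_{i+2}$, so a vertex adjacent to $v_0$ and $v_2$ is adjacent to \emph{both} endpoints and thus causes no connectivity loss) does the real work. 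A secondary subtlety is the case analysis when $G - V(C)$ is not $2$-connected, which I would route through an end-block argument mirroring the proof of Lemma~\ref{three vertices}: locate an end-block avoiding $\{v_0, v_2\}$, use $2$-connectivity of $G$ to find two disjoint linking paths, apply Theorem~\ref{mainthm} inside the end-block, and splice. Finally I would double-check the elementary fact that combining an AP of length $k-1$ and common difference $\le 2$ with two offsets of opposite parity gives $k$ genuinely consecutive integers — this is a short case check on parities.
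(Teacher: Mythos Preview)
Your central claim in the non-triangle case is wrong, and this is the main gap. You propose to build $k-1$ admissible paths from $v_0$ to $v_2$ through $G-V(C)$ and close them up with the two arcs of $C$, which have lengths $2$ and $2s-1$. You then assert that ``since $2$ and $2s-1$ have opposite parities, the union over $i$ realizes $\ge k$ consecutive lengths.'' This is false: the two arc lengths differ by $2s-3$, which can be arbitrarily large, so the two resulting families of cycle lengths need not overlap or interleave at all. For instance with $k=3$, $s=5$ and two paths of lengths $10,12$, you get cycles of lengths $12,14,19,21$ --- no three consecutive integers. Opposite parity alone buys you nothing; what you need is that the two arc lengths differ by \emph{exactly one}. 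This is precisely why the paper does not pick $v_0,v_2$ but rather nearly antipodal vertices $v_0,v_s$ (arcs of lengths $s$ and $s+1$), and finds paths in $G-V(C)$ between neighbours $x,y$ of $v_0,v_s$; when some vertex of $G-V(C)$ has two $C$-neighbours $v_1,v_{2s}$, the paper upgrades to four arc-like paths of lengths $s,s+1,s+2,s+3$ to $v_s$, compensating for the weaker degree bound.

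A second, smaller gap: in the triangle case your use of Lemma~\ref{three vertices} with $z=v_2$ yields $k-1$ admissible $v_0$--$v_1$ paths \emph{in $G$}, which may pass through $v_2$; for any such path you cannot form the cycle $P_i \cup v_0v_2v_1$, so the gluing argument breaks. The paper instead contracts $\{v_1,v_2\}$ to a vertex $u$ and applies Theorem~\ref{mainthm} to $(G',u,v_0)$, obtaining paths that lift to $v_0$--$v_i$ paths internally disjoint from the triangle, after which both arcs are available. In summary, the overall architecture you outline (paths through $G-V(C)$ glued to arcs of $C$) is correct and is what the paper does, but the specific choice of entry points on $C$ is the whole point, and yours does not work.
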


\begin{proof}
The theorem is obvious when $k=1$.
For the case $k=2$, let $C_0$ be an induced non-separating odd cycle in $G$ and $x,y\in V(C_0)$
such that $x,y$ divide $C_0$ into two subpaths say $P_1, P_2$ of lengths differing by one.
Since $G$ has minimum degree at least three,
each of $x,y$ has at least one neighbor in $G-V(C_0)$ and thus there exists a path $L$ from $x$ to $y$ in $G[(V(G)-V(C_0)) \cup \{x,y\}]$.
Then $L\cup P_1$ and $L\cup P_2$ are two cycles of consecutive lengths in $G$.

So we may assume that $k\geq 3$. By Lemma~\ref{lem:better non-separating cycle}, there exists a non-separating induced odd cycle $C=v_0v_1...v_{2s}v_0$ in $G$ satisfying the conclusions of Lemma~\ref{lem:better non-separating cycle}.
In particular, the minimum degree of $G-V(C)$ is at least $k-1$.
Throughout the rest of the proof of this theorem, the subscripts will be taken under the additive group $\mathbb{Z}_{2s+1}$.

Suppose that $C$ is a triangle $v_0v_1v_2v_0$.
Consider the graph $G'$ obtained from $G$ by contracting $v_1$ and $v_2$ into a vertex $u$.
Then $G'$ is $2$-connected with minimum degree at least $k$.
By Theorem \ref{mainthm}, there are $k-1$ admissible paths in $G'$ from $u$ to $v_0$.
By the definition of admissible paths, each of these paths has length at least two,
so it does not contain the edge $uv_0$, and each of those paths corresponds to a path in $G-V(C)$ from $v_0$ to some $v_i\in \{v_1,v_2\}$.
Concatenating with $v_0v_i$ and $v_0v_{3-i}v_i$, these paths lead to cycles of at least $k$ consecutive lengths in $G$.

Therefore we may assume that $C$ is not a triangle and hence $s\geq2$.
For any two vertices $v_i,v_j$ in $C$, denote $C'_{i,j}$ and $C''_{i,j}$ to be the shorter and longer paths in $C$ from $v_i$ to $v_j$, respectively.

Suppose that $G-V(C)$ is $2$-connected.
We first assume that for every $v\in V(G-C)$, $|N_G(v)\cap V(C)|\leq 1$.
Then the minimum degree of $G-V(C)\geq k$.
Since $G$ has minimum degree at least $k+1\geq4$, there exist distinct vertices $x,y\in V(G-C)$ such that $xv_0,yv_s\in E(G)$.
By Theorem~\ref{mainthm}, $G-V(C)$ contains $k-1$ admissible paths $P_1,...,P_{k-1}$ from $x$ to $y$.
Note that $C'_{0,s}$ and $C''_{0,s}$ are two paths from $v_0$ to $v_s$ of lengths $s$ and $s+1$, respectively.
Concatenating each of $C'_{0,s}$ and $C''_{0,s}$ with $v_0x\cup P_i\cup yv_s$ for all $i\in [k-1]$ leads to $k$ cycles of consecutive lengths in $G$.
Hence we may assume that there exists some $u\in V(G-C)$ adjacent in $G$ to two vertices of $C$.
By Lemma~\ref{lem:better non-separating cycle}, without loss of generality, let $N_G(u)\cap V(C)=\{v_1,v_{2s}\}$.
Since $C$ is an induced cycle and $d(v_s)\geq\delta(G)\geq k+1\geq 4$, there exists a vertex $w\in V(G-C)-\{u\}$ such that $wv_s\in E(G)$.
Since $G-V(C)$ has minimum degree at least $k-1$, by Theorem~\ref{mainthm}, $G-V(C)$ contains $k-2$ admissible paths $R_1,...,R_{k-2}$ from $u$ to $w$.
Observe that $uv_1\cup C'_{1,s}, uv_{2s}\cup C'_{s,2s}, uv_{2s}\cup C''_{s,2s}$ and $uv_1\cup C''_{1,s}$
are four paths from $u$ to $v_{s}$ of lengths $s,s+1,s+2$ and $s+3$, respectively.
By concatenating each of these paths with $v_{s}w\cup R_i$ for $i\in [k-2]$,
we obtain cycles of $k+1$ consecutive lengths in $G$.

Therefore $G-V(C)$ is not $2$-connected.
Let $B$ be an end-block of $G-V(C)$ with cut-vertex $b$.
Since every vertex in $B-b$ has degree at least $k-1\geq2$ in $B$, $B$ is $2$-connected.

Suppose that $|N_G(v)\cap V(C)|\leq 1$ for every vertex $v\in V(B-b)$.
Then every vertex in $B$ other than $b$ has degree at least $k$ in $B$.
We first assume that there exist $x\in V(B-b)$ and $y\in V(G-C)-V(B-b)$ such that $v_jx, v_{j+s}y\in E(G)$ for some $j$,
then by Theorem~\ref{mainthm}, $B$ contains $k-1$ admissible paths $P_1,...,P_{k-1}$ from $x$ to $b$.
Let $P$ be a path in $G-(V(C) \cup V(B-b))$ from $b$ to $y$.
Note that $C'_{j,j+s},C''_{j,j+s}$ are two paths of lengths $s,s+1$, respectively.
Then, by concatenating each of these paths with $P_i$ and $P$, we find $k$ cycles in $G$ with consecutive lengths.
Hence, we may assume that for every integer $j$ with $0 \leq j \leq 2s$, if $v_j$ is adjacent to $V(B-b)$, then $N_G(v_{j+s}) \cap V(G-C) \subseteq V(B-b)$.
Since $G$ is $2$-connected, there is some vertex $v_{i^*}$ of $C$ adjacent in $G$ to $V(B-b)$.
Since $k+1 \geq 4$, every vertex in $V(C)$ is adjacent in $G$ to some vertex in $V(G-C)$.
So $\emptyset \neq N_G(v_{i^*+s})-V(C) \subseteq V(B-b)$.
Hence we can inductively show that $N_G(v_{i^*+rs})-V(C) \subseteq V(B-b)$ for every positive integer $r$.
Since $s$ is a generator of ${\mathbb Z}_{2s+1}$, $N_G(C)\subseteq V(B-b)$.
This implies that $b$ is a cut-vertex of $G$, contradicting the $2$-connectivity of $G$.

Therefore there exists a vertex $x\in V(B-b)$ with at least two neighbors in $V(C)$.
By Lemma \ref{lem:better non-separating cycle}, without loss of generality,
we may assume that $N_G(x)\cap V(C)=\{v_1,v_{2s}\}$.
Assume there exists some $y\in V(G-C)-V(B-b)$ such that $v_{s}y\in E(G)$.
Since every vertex in $B-b$ has degree at least $k-1$ in $B$,
by Theorem~\ref{mainthm}, $B$ contains $k-2$ admissible paths $Q_1,...,Q_{k-2}$ from $x$ to $b$.
Let $Q$ be a fixed path in $G-(V(C)\cup V(B-b))$ from $b$ to $y$.
Note that $xv_1\cup C'_{1,s}, xv_{2s}\cup C'_{s,2s}, xv_{2s}\cup C''_{s,2s}$ and $xv_1\cup C''_{1,s}$
are four paths from $x$ to $v_s$ of lengths $s,s+1,s+2$ and $s+3$, respectively.
By concatenating each of these paths with $Q_i\cup Q\cup yv_s$, we find $k+1$ cycles of consecutive lengths in $G$.
Hence we have $N_G(v_{s})\cap V(G-C)\subseteq V(B-b)$.
Since $|N_G(v_{s})\cap V(G-C)|\ge k-1\ge 2$, there exists $z\in N_G(v_s)\cap V(B)-\{x,b\}$. 
If $k\leq 4$, then using the above four paths from $x$ to $v_s$, together with $v_sz$ and a path in $B$ from $z$ to $x$, we obtain cycles of four consecutive lengths in $G$.
So we may assume $k\geq 5$. 
Note that every vertex of $B$ other than $b$ has degree at least $k-1\geq 4$ in $B$.
By Lemma \ref{three vertices}, $B$ has $k-3$ admissible paths $R_1,...,R_{k-3}$ from $x$ to $z$.
Again, concatenating each of these paths with $zv_s$ and the four paths from $x$ to $v_s$,
one can find cycles of $k$ consecutive lengths in $G$.
This completes the proof of Theorem~\ref{nonseparating cycle->consecutive cycles}.
\end{proof}

Using Theorem \ref{nonseparating cycle->consecutive cycles}, we can derive Theorems \ref{nonbi 3} and \ref{chroconcy} easily.

\medskip

{\noindent \bf Theorem \ref{nonbi 3}.}
{\it Every non-bipartite $3$-connected graph with minimum degree at least $k+1$ contains $k$ cycles of consecutive lengths.}

\begin{proof}
This theorem immediately follows from Lemma \ref{lem:3-con->non-separating} and Theorem \ref{nonseparating cycle->consecutive cycles}.
\end{proof}

We say that a graph $G$ is {\em $k$-critical}, if it has chromatic number $k$ but every proper subgraph of $G$ has chromatic number less than $k$.

We now prove Theorem \ref{chroconcy}, which we restate as the following.

\medskip

\begin{theo}
For every positive integer $k$, every graph with chromatic number at least $k+2$ contains $k$ cycles of consecutive lengths.
\end{theo}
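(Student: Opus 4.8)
The plan is to reduce to Theorem~\ref{nonseparating cycle->consecutive cycles} by working inside a $(k+2)$-critical subgraph. First I would let $G'$ be a $(k+2)$-critical subgraph of $G$; since every vertex of a $(k+2)$-critical graph has degree at least $k+1$, the graph $G'$ has minimum degree at least $k+1$. Moreover a $(k+2)$-critical graph with $k+2\ge 3$, i.e. $k\ge 1$, is not bipartite (bipartite graphs are $2$-colorable) and, by a classical theorem of Dirac, every $\ell$-critical graph with $\ell\ge 3$ is $2$-connected. Thus $G'$ is a $2$-connected non-bipartite graph of minimum degree at least $k+1$. If $G'$ were $3$-connected, Theorem~\ref{nonbi 3} would finish the job immediately, so the remaining work is to handle the situation where $G'$ is exactly $2$-connected.

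The key step is therefore to show that a $2$-connected (but not $3$-connected) $(k+2)$-critical graph still contains a non-separating induced odd cycle, so that Theorem~\ref{nonseparating cycle->consecutive cycles} applies. Here I would use the structure of a $2$-cut $\{a,b\}$ in $G'$: write $G'$ as the union of two \emph{bridges} $H_1,H_2$ glued along $\{a,b\}$, each $H_i$ containing at least one internal vertex. A standard criticality argument (used in the proof of Gallai's theorems on low-vertex subgraphs of critical graphs) shows that in each bridge $H_i$ the two "gadget" graphs $H_i+ab$ and $H_i/ab$ are themselves close to critical, and in particular one of the bridges, say $H_1$, has the property that $H_1+ab$ is $(k+2)$-critical while $H_2+ab$ has an optimal coloring giving $a,b$ the same color. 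Because $G'$ is non-bipartite, at least one of the bridges carries the "odd" behavior, and I would argue that this bridge contains an odd $a$--$b$ path $P$ of a controlled type; gluing $P$ to the rest of $G'$ and choosing the path carefully inside the $3$-connected "torsos" of the SPQR-type decomposition produces a non-separating induced odd cycle of $G'$. Concretely, one descends along the block/$2$-cut decomposition until one reaches a $3$-connected piece or a cycle that is non-bipartite, invokes Lemma~\ref{lem:3-con->non-separating} there, and then lifts the resulting cycle back up, checking at each gluing step that connectivity of the complement is preserved (the discarded side is attached through the $2$-cut, hence stays connected to what remains).

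Once a non-separating induced odd cycle of $G'$ is in hand, Theorem~\ref{nonseparating cycle->consecutive cycles} with the minimum-degree bound $\delta(G')\ge k+1$ yields $k$ cycles of consecutive lengths in $G'$, hence in $G$. The tightness is witnessed by $K_{k+1}$, which has chromatic number $k+1$ yet contains only cycles of lengths $3,\dots,k+1$, i.e.\ no $k$ cycles of consecutive lengths once $k\ge 2$ fails to... — more precisely $K_{k+1}$ realizes $\chi_k\ge k+1$, matching the bound.

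The main obstacle I anticipate is exactly the non-$3$-connected case: extracting a non-separating induced odd cycle from a merely $2$-connected critical graph. The subtlety is that a non-separating odd cycle of a $3$-connected torso need not remain non-separating (or even induced) after re-gluing the bridges across a $2$-cut, so the gluing has to route the cycle through the cut vertices $\{a,b\}$ in a way that keeps the complement connected; controlling parity simultaneously is what makes criticality (rather than mere $2$-connectivity plus minimum degree) essential. I would expect the paper to instead route around this by directly proving a statement of the form "every $2$-connected non-bipartite graph with a non-separating induced odd cycle and minimum degree $\ge k+1$..." is already available (it is, as Theorem~\ref{nonseparating cycle->consecutive cycles}), and then supplying a short criticality lemma guaranteeing the non-separating induced odd cycle — this lemma is the real crux.
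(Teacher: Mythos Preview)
Your overall architecture matches the paper's exactly: pass to a $(k{+}2)$-critical subgraph $G'$, observe it is $2$-connected with $\delta(G')\ge k+1$, produce a non-separating induced odd cycle in $G'$, and invoke Theorem~\ref{nonseparating cycle->consecutive cycles}. You also correctly predict that the paper handles the crux by ``supplying a short criticality lemma guaranteeing the non-separating induced odd cycle.'' Indeed, the paper does not argue this structurally at all: it simply cites a result of Krusenstjerna-Hafstr{\o}m and Toft \cite{KHT} stating that every $t$-critical graph with $t\ge 4$ contains a non-separating induced odd cycle (their proof is stated for $t=4$ but works for all $t\ge 4$). With that citation the entire proof is three sentences; the case split on whether $G'$ is $3$-connected is unnecessary.

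Where your proposal has a genuine gap is in the attempted replacement for that citation. Your lifting step claims that a non-separating induced odd cycle $C$ found in a $3$-connected torso remains non-separating after re-attaching a bridge $H$ across the $2$-cut $\{a,b\}$, because ``the discarded side is attached through the $2$-cut, hence stays connected to what remains.'' This fails precisely when $C$ passes through both $a$ and $b$: then the interior of $H$ is cut off from $T-V(C)$ in $(T\cup H)-V(C)$. You acknowledge this difficulty two sentences later, but nothing in your sketch resolves it; the vague appeal to Gallai-type criticality of $H_i+ab$ or $H_i/ab$ does not by itself control whether the cycle you build uses $0$, $1$, or $2$ of the cut vertices, nor does it guarantee the cycle stays induced after gluing. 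Criticality \emph{is} what makes the Krusenstjerna-Hafstr{\o}m--Toft argument go through, but substantially more work is needed than what you outline. So: right plan, right identification of the crux, but the crux lemma needs either the citation or an actual proof.
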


\begin{proof}
Let $G$ be any graph with chromatic number at least $k+2$.
We may assume that $k\geq 2$, for otherwise the theorem is obvious.
Then there exists a $(k+2)$-critical subgraph $G'$ of $G$.
It is easy to see that $G'$ is $2$-connected and has minimum degree at least $k+1$.
It is known that for any integer $t\geq 4$, every $t$-critical graph contains a non-separating induced odd cycle
(the case $t=4$ was explicitly stated and proved by Krusenstjerna-Hafstr{\o}m and Toft \cite[Theorem 4]{KHT},
but their proof works for every $t\geq 4$ as well).
Therefore $G'$ contains a non-separating induced odd cycle.
By Theorem~\ref{nonseparating cycle->consecutive cycles}, we see that $G'$ (and thus $G$) contains $k$ cycles of consecutive lengths.
\end{proof}

\section{Dean's conjecture} \label{sec:Dean_proof}
In this section we prove Conjecture \ref{deanconj}, which will be divided into several lemmas.
For a brief overview of the coming proof, we would suggest readers to have a sketch on the proof of Theorem \ref{Thm:Dean2},
which is a restatement of Theorem \ref{deantheo}.

Define $K_4^-$ to be the graph obtained from $K_4$ by deleting one edge.
A {\it chord} of a cycle $C$ in a graph $G$ is an edge $e \in E(G)-E(C)$ such that the both ends of $e$ belong to $V(C)$.
For a positive integer $t \geq 4$, we define $K_{t}'$ to be the graph obtained from $K_t$ by deleting $v_1v_4$ and $v_iv_j$ for every $i \in \{1,2\}$ and $j \in \{5,6,...,t\}$, where $V(K_t) = \{v_k: 1 \leq k \leq t\}$.
Note that $K_{t}'$ contains a Hamilton cycle.
Also, for every positive integer $d$, let $K_{d,d}^-$ be the graph obtained from $K_{d,d}$ by deleting an edge.

\begin{lem} \label{usingK_4-}
Let $d$ and $t$ be integers with $d+1 \geq t \geq 5$.
Let $G$ be a graph containing a $K_4^-$ subgraph but not containing a $K_t'$ subgraph.
If $G$ is $(t-1)$-connected, then $G$ contains $d$ cycles of consecutive lengths.
\end{lem}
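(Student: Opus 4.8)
\textbf{Proof plan for Lemma \ref{usingK_4-}.}
The plan is to exploit the $K_4^-$ subgraph to produce, inside a highly connected ambient graph, a rooted graph $(H,x,y)$ satisfying the hypotheses of Theorem~\ref{mainthm} with minimum degree large enough that the resulting admissible paths, once closed up, give $d$ cycles of consecutive lengths. Concretely, let $v_1v_2v_3$ be a triangle of the $K_4^-$ with $v_4$ adjacent to, say, $v_2$ and $v_3$ (so $v_1v_4 \notin E(G)$). I would set $x$ and $y$ to be two of these four vertices — the natural choice is to take $x=v_1$ and $y=v_4$, the two non-adjacent vertices of the $K_4^-$ — and look for many admissible paths from $x$ to $y$, since any such path $P$ with $|P|\geq 2$ together with the two short $x$–$y$ paths $v_1v_2v_4$ and $v_1v_3v_4$ (of lengths $2$ and $2$; more usefully, $v_1v_2v_4$ of length $2$ and $v_1v_2v_3v_4$ of length $3$, using the triangle) yields cycles whose lengths form an arithmetic progression that extends the one given by the $|P|$'s. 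So the first step is a bookkeeping step: verify that $d$ admissible $x$–$y$ paths plus the $K_4^-$ structure close up to $d$ (or more) cycles of consecutive lengths.

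The second and main step is to arrange that Theorem~\ref{mainthm} applies with parameter $k = d$, i.e.\ that we can find a $2$-connected rooted subgraph $(H,x,y)$ of $G$ in which every vertex other than $x,y$ has degree at least $d+1$. Since $G$ is only $(t-1)$-connected and $t-1$ may be much smaller than $d+1$, we cannot simply take $H=G$; instead the idea is to repeatedly contract or delete low-degree pieces. The standard device is: as long as there is a vertex $v \notin \{x,y\}$ of degree $\leq d$ in the current graph, its neighborhood is small, and $(t-1)$-connectivity forces structure around it; one either contracts $N[v]$ or removes $v$ and argues the connectivity and the presence of $x,y$ and the $K_4^-$ are preserved. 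This is exactly the style of the contraction arguments in Section~\ref{sec:admiss_path} (Lemmas~\ref{case1}–\ref{casebi}). The role of the ``no $K_t'$ subgraph'' hypothesis is to prevent this reduction from getting stuck: $K_t'$ has a Hamilton cycle and is precisely the obstruction that would let a dense-but-small configuration survive the reduction without giving enough consecutive cycle lengths, so forbidding it guarantees the process terminates in a genuinely high-minimum-degree $2$-connected rooted graph. I expect \emph{this} termination/forbidden-subgraph bookkeeping to be the crux: showing that every time the minimum-degree reduction fails, one either already finds $d$ consecutive cycle lengths directly (using the $K_4^-$ and the clique-like local structure) or exhibits a $K_t'$, contradiction.

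The third step is then routine: apply Theorem~\ref{mainthm} to $(H,x,y)$ with $k=d$ to get $d$ admissible $x$–$y$ paths in $H \subseteq G$, translate any contractions back into honest paths in $G$ that are internally disjoint from the $K_4^-$ (handled by always contracting/deleting away from $\{v_1,v_2,v_3,v_4\}$, or by keeping a separate $x$–$y$ connection through the rest of $G$ as in Lemma~\ref{three vertices}), and invoke Lemma~\ref{lemconcatenat} to combine these with the two short paths through the $K_4^-$. Counting gives at least $d$ cycles of consecutive lengths. The only place needing care is the boundary behavior when $t-1$ is barely below $d+1$ and the reduction has to be done several times; here I would track an explicit potential (like $|V|+|E|$) as in the proof of Theorem~\ref{mainthm} and argue by minimal counterexample, so that each reduction step invokes the statement for a smaller graph. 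The hypotheses $d+1\geq t\geq 5$ ensure there is always enough room: $t\geq 5$ makes $K_t'$ well-defined and makes the local cliques produced by a failed reduction large enough to yield the needed progression, while $d+1\geq t$ keeps the target minimum degree compatible with the available connectivity after one contraction.
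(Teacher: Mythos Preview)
Your plan has a genuine gap in Step 2, and it stems from misidentifying both the target parameter for Theorem~\ref{mainthm} and the role of the forbidden $K_t'$.

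You aim to produce a rooted graph $(H,x,y)$ of minimum degree at least $d+1$ so that Theorem~\ref{mainthm} applies with $k=d$. But the hypotheses only give $(t-1)$-connectivity, hence minimum degree $\ge t-1\le d$; there is no mechanism by which ``repeatedly contracting or deleting low-degree pieces'' raises the minimum degree to $d+1$. Contractions in the style of Section~\ref{sec:admiss_path} never increase minimum degree beyond what the ambient graph already has, and your appeal to a minimal-counterexample potential does not help here because the statement you would be invoking on the smaller graph still requires the same unattainable degree. Likewise, your description of the no-$K_t'$ hypothesis as ``preventing the reduction from getting stuck'' is not a usable mechanism.

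The paper's proof avoids all of this with one clean idea you are missing: take $x$ to be a degree-$2$ vertex of the $K_4^-$, $y$ a neighbour of $x$, and extend the other two vertices $a,b$ (which are adjacent to each other and to both $x,y$) to a \emph{maximal} clique $K$ in $G-\{x,y\}$. The no-$K_t'$ hypothesis now has a concrete role: since $G[\{x,y\}\cup K]$ would contain $K_t'$ once $|K|\ge t-2$, we get $|K|\le t-3$, so $G-K$ is still $2$-connected. Maximality of $K$ guarantees every vertex outside $K\cup\{x,y\}$ loses at most $|K|-1$ neighbours upon deleting $K$, so $((G-K)-xy,x,y)$ has minimum degree at least $d-|K|+1$ and Theorem~\ref{mainthm} yields $d-|K|$ admissible $x$--$y$ paths there. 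On the other side, $G[K\cup\{x,y\}]$ supplies $|K|+1$ paths from $x$ to $y$ of \emph{consecutive} lengths $2,3,\ldots,|K|+2$. Concatenating gives $d$ cycles of consecutive lengths (with a short check when the admissible paths have common difference two). The point is that you should not try to reach $k=d$ in Theorem~\ref{mainthm}; you reach $k=d-|K|$ and make up the deficit with the $|K|+1$ consecutive paths through the clique.
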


\begin{proof}
Let $\{x,y,a,b\}$ be a set of four vertices of $G$ inducing a $K_4^-$ subgraph, where $x$ is of degree two in this $K_4^-$ subgraph and $y$ is a neighbor of $x$.
So there exists a clique in $G-\{x,y\}$ containing $a,b$.
Let $K$ be a maximal clique in $G-\{x,y\}$ containing $a,b$.
Note that $\lvert K \rvert \leq t-3$, for otherwise $G[\{x,y\} \cup K]$ contains a $K_t'$ subgraph.
Hence $G-K$ is $2$-connected since $G$ is $(t-1)$-connected.

By the maximality of $K$, every vertex in $G-(K \cup \{x,y\})$ is adjacent in $G$ to at most $\lvert K \rvert-1$ vertices in $K$.
So $((G-K)-xy,x,y)$ is a $2$-connected rooted graph of minimum degree at least $d-\lvert K \rvert+1$.
By Theorem \ref{mainthm}, there exist $d-\lvert K \rvert$ admissible $x$-$y$ paths $P_1,P_2,...,P_{d-\lvert K \rvert}$ in $(G-K)-xy$.
Note that there exist $\lvert K \rvert+1$ $x$-$y$ paths $Q_1,Q_2,...,Q_{\lvert K \rvert+1}$ in $G[K \cup \{x,y\}]$ with consecutive lengths.
For every integers $i,j$ with $1 \leq i \leq d-\lvert K \rvert$ and $1 \leq j \leq \lvert K \rvert+1$, let $C_{i,j}$ be the cycle obtained by concatenating $P_i$ and $Q_j$.
Let ${\mathcal C} = \{C_{i,j}: 1 \leq i \leq d-\lvert K \rvert, 1 \leq j \leq \lvert K \rvert+1\}$.
If $P_1,P_2,...,P_{d-\lvert K \rvert}$ have consecutive lengths, then ${\mathcal C}$ contains $d$ cycles of consecutive lengths.
If the lengths of $P_1,P_2,...,P_{d-\lvert K \rvert}$ form an arithmatic progression of length two, then ${\mathcal C}$ contains $2d-\lvert K \rvert-2 \geq d$ cycles of consecutive lengths.
\end{proof}

\begin{lem} \label{K_3noK_4-}
Let $d \geq 3$ be an integer.
Let $G$ be a $3$-connected graph of minimum degree at least $d$.
If $G$ contains a $K_3$ subgraph but does not contain a $K_4^-$ subgraph, then $G$ contains $d$ cycles of consecutive lengths.
\end{lem}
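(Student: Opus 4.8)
The plan is to exploit the triangle via an edge contraction and then invoke Theorem \ref{mainthm}, exactly in the spirit of the triangle subcase in the proof of Theorem \ref{nonseparating cycle->consecutive cycles}. Fix a triangle $T=v_0v_1v_2$ in $G$. Since $G$ contains no $K_4^-$, the edge $v_1v_2$ lies in no triangle other than $T$, so $N_G(v_1)\cap N_G(v_2)=\{v_0\}$; in particular no vertex of $V(G)\setminus V(T)$ is adjacent to both $v_1$ and $v_2$. Let $G'$ be obtained from $G$ by contracting $v_1$ and $v_2$ into a single new vertex $u$. The disjointness just noted guarantees $G'$ is simple, and $uv_0\in E(G')$.

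The first step is to check that $(G',u,v_0)$ is a $2$-connected rooted graph of minimum degree at least $d$. For the minimum degree: every $z\in V(G')\setminus\{u,v_0\}$ lies in $V(G)\setminus V(T)$ and, by $K_4^-$-freeness, is adjacent in $G$ to at most one of $v_1,v_2$; hence contraction does not decrease its degree and $d_{G'}(z)=d_G(z)\geq d$. For $2$-connectedness, suppose $w$ is a cut-vertex of $G'$. If $w=u$ then $G'-u=G-\{v_1,v_2\}$ is disconnected, contradicting that $G$ is $3$-connected (so $\{v_1,v_2\}$ is not a cut set); if $w\neq u$ then $G'-w=(G-w)/v_1v_2$ is disconnected, which forces $G-w$ disconnected since contracting an edge never disconnects a graph, again contradicting $3$-connectivity. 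So $(G',u,v_0)$ is $2$-connected. (As a side remark, $G$ has at least $5$ vertices, since a $3$-connected graph on $4$ vertices is $K_4$, which contains $K_4^-$.)

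Next, apply Theorem \ref{mainthm} to $(G',u,v_0)$ with parameter $k=d-1\geq 2$ to obtain $d-1$ admissible paths $P_1,\dots,P_{d-1}$ from $u$ to $v_0$ in $G'$. Since each $|P_i|\geq 2$, the path $P_i$ does not use the edge $uv_0$, so its second vertex lies in $(N_G(v_1)\cup N_G(v_2))\setminus V(T)$; undoing the contraction, $P_i$ corresponds to a path $P_i'$ in $G$ from some $v_{j_i}\in\{v_1,v_2\}$ to $v_0$, of the same length $|P_i|$ and internally disjoint from $V(T)$. Closing $P_i'$ through the triangle in the two available ways --- adding the edge $v_0v_{j_i}$, or adding the length-two path through the third vertex of $T$ --- produces cycles of lengths $|P_i|+1$ and $|P_i|+2$ in $G$.

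Finally, observe that the set of cycle lengths obtained, $\bigcup_{i\in[d-1]}\{|P_i|+1,|P_i|+2\}$, is an interval of consecutive integers: it has size $d$ if $|P_1|,\dots,|P_{d-1}|$ are consecutive, and size $2d-2\geq d$ (using $d\geq 3$) if they form an arithmetic progression with common difference two. In either case $G$ contains $d$ cycles of consecutive lengths. I do not expect a substantive obstacle in this argument; the only points needing care are that the contraction keeps the graph simple and $2$-connected and that each contracted path genuinely lifts to a path of $G$ internally avoiding $T$ --- both of which are immediate from the $K_4^-$-freeness of $G$.
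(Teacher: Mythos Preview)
Your proof is correct and follows essentially the same approach as the paper: contract the edge $v_1v_2$ of a triangle, apply Theorem~\ref{mainthm} to the resulting rooted graph to get $d-1$ admissible paths avoiding the edge $uv_0$, lift them back to $G$, and close each through the triangle in the two available ways. Your write-up is in fact more explicit than the paper's in justifying simplicity after contraction, the $2$-connectedness of $G'$, and the lifting step.
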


\begin{proof}
Let $\{a,b,c\}$ be a set of three vertices of $G$ that induces a $K_3$ subgraph.
Let $G'$ be the graph obtained from $G$ by contracting the edge $bc$ into a new vertex $a^*$ and deleting resulting loops and parallel edges.
Since $G$ is $3$-connected, $G'$ is $2$-connected, so $(G'-aa^*,a,a^*)$ is a $2$-connected rooted graph.
Since $G$ does not contain a $K_4^-$ subgraph, $(G'-aa^*,a,a^*)$ has minimum degree at least $d$.
By Theorem \ref{mainthm}, there exist $d-1$ admissible $a$-$a^*$ paths $P_1,P_2,...,P_{d-1}$ in $G'-aa^*$.
So there exist paths $P_1',P_2',...,P_{d-1}'$ in $G$ such that their lengths form an arithmetic progression of common difference one or two, and for every $i$ with $1 \leq i \leq d-1$, $P_i'$ is either an $a$-$b$ path disjoint from $c$ or an $a$-$c$ path disjoint from $b$.
For every integer $i$ with $1 \leq i \leq d-1$, let $C_{i,1}=P_i'+ab$, $C_{i,2}=P_i'+ac$, $C_{i,3}=P_i' \cup abc$, and let $C_{i,4}=P_i' \cup acb$.
Then the set $\{C_{i,j}: 1 \leq i \leq d-1, 1 \leq j \leq 4\}$ contains $d$ cycles of consecutive lengths.\end{proof}

\begin{lem} \label{plus3}
Let $\ell$ be a positive integer.
Let $A$ be a subset of integers such that $\ell$ elements of $A$ form an arithmetic progression of common difference $r$, where $r \in \{1,2\}$.
	\begin{enumerate}
		\item If $r=1$ and $\ell \geq 3$, then for every integer $x$, the set $\{a+x, a+x+3: a \in A\}$ contains $\ell+3$ elements that form an arithmetic progression of common difference one.
		\item If $r=2$ and $\ell \geq 2$, then for every integer $x$, the set $\{a+x, a+x+3: a \in A\}$ contains $2\ell-2$ elements that form an arithmetic progression of common difference one.
	\end{enumerate}
\end{lem}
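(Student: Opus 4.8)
The plan is to strip the statement down to a single concrete configuration and then exhibit the required progression explicitly. First I would invoke translation invariance: replacing $A$ by $\{a+x:a\in A\}$ does not affect the hypothesis (some $\ell$ of its elements still form an arithmetic progression of common difference $r$), and for the translated set the set $\{a,a+3:a\in A\}$ equals $\{a+x,a+x+3:a\in A\}$ for the original $A$, so it suffices to treat $x=0$. Second, the conclusion only demands a long arithmetic progression \emph{inside} $\{a,a+3:a\in A\}$, and this set grows with $A$; hence I may delete from $A$ everything outside a fixed $\ell$-term arithmetic progression of common difference $r$, and after one more translation assume $A=\{0,r,2r,\dots,(\ell-1)r\}$. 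It then remains to analyze $S:=\{0,r,\dots,(\ell-1)r\}\cup\{3,3+r,\dots,3+(\ell-1)r\}$.

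For statement~1 we have $r=1$ and $\ell\ge 3$, so $S=\{0,1,\dots,\ell-1\}\cup\{3,4,\dots,\ell+2\}$. Since $\ell\ge 3$, the second block begins at $3\le\ell$, i.e.\ no later than one step past the last element $\ell-1$ of the first block, so the two blocks leave no gap and $S=\{0,1,\dots,\ell+2\}$: an arithmetic progression of common difference one with $\ell+3$ terms, as required.

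For statement~2 we have $r=2$ and $\ell\ge 2$. Now $\{0,2,\dots,2\ell-2\}\subseteq S$ is exactly the set of even integers $m$ with $0\le m\le 2\ell-2$, and $\{3,5,\dots,2\ell+1\}\subseteq S$ is exactly the set of odd integers $m$ with $3\le m\le 2\ell+1$. Consequently every integer $m$ with $2\le m\le 2\ell-1$ lies in $S$: if such $m$ is even then $0\le m\le 2\ell-2$, and if it is odd then $3\le m\le 2\ell+1$. Hence $\{2,3,\dots,2\ell-1\}\subseteq S$ is an arithmetic progression of common difference one with $2\ell-2$ terms.

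There is no substantial obstacle here; this is a bookkeeping lemma, whose role in the sequel is to convert a supply of ``length-three'' detours into extra consecutive cycle lengths, and the reductions above make the arithmetic entirely transparent. The only steps requiring a moment of care are the two interval computations, and they are precisely where the numerical hypotheses $\ell\ge 3$ in statement~1 and $\ell\ge 2$ in statement~2 are consumed: relaxing either bound would leave a gap between the two shifted copies, and the union would no longer be a single block of consecutive integers.
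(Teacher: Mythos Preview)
Your proof is correct and follows essentially the same approach as the paper: both restrict to the $\ell$-term progression, write the set $S$ as the union of two shifted copies, and verify directly that the union contains the required interval of consecutive integers. Your explicit translation to $x=0$ and $a_1=0$ is merely a cosmetic simplification of the paper's bounds $a_1+x \le i \le a_1+x+\ell+2$ and $a_1+x+2 \le i \le a_1+x+2\ell-1$.
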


\begin{proof}
Let $a_1,a_2,...,a_\ell$ be $\ell$ elements of $A$ forming an arithmetic progression of common difference $r$, where $r \in \{1,2\}$.
We may assume that for every $i$ with $1 \leq i \leq \ell$, $a_i = a_1 + (i-1)r$.
Let $x$ be any integer, and let $S = \{a_i+x, a_i+x+3: 1 \leq i \leq \ell\}$.

If $r=1$ and $\ell \geq 3$, then $S = \{i: a_1+x \leq i \leq a_1+\ell-1+x+3\}$.
If $r=2$ and $\ell \geq 2$, then $S$ contains $\{i: a_1+2+x \leq i \leq a_1+2(\ell-1)+x+1\}$.
This proves the lemma.
\end{proof}

\begin{lem} \label{no_K3_nb1}
Let $d$ be an integer with $d\geq 6$.
If $G$ is a $3$-connected non-bipartite graph with minimum degree at least $d$ that does not contain a $K_3$ subgraph, then either
	\begin{enumerate}
		\item $G$ contains $d$ cycles of consecutive lengths, or
		\item $d \in \{6,7\}$ and there exists an induced cycle $C$ in $G$, denoted by $v_0v_1v_2...v_{2s}v_0$, of length at least five such that
		\begin{enumerate}
			\item $G-V(C)$ is connected but not $2$-connected,
			\item every end-block of $G-V(C)$ is $2$-connected, and
			\item for every non-cut vertex $v$ of $G-V(C)$, $\lvert N_G(v) \cap V(C) \rvert \leq 2$, and if $\lvert N_G(v) \cap V(C) \rvert = 2$, then $N_G(v) \cap V(C) =\{v_i,v_{i+2}\}$ for some $i \in {\mathbb Z}_{2s+1}$ and the indices are computed in ${\mathbb Z}_{2s+1}$. 
		\end{enumerate}
	\end{enumerate}
\end{lem}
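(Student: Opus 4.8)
The plan is to single out one well-behaved non-separating induced odd cycle $C$ and then rerun, with tighter quantitative control, the argument behind Theorem~\ref{nonseparating cycle->consecutive cycles}; the cases where that control falls one unit short are precisely what forces the exceptional structure in item~2.

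Since $G$ is $3$-connected and non-bipartite, Lemma~\ref{lem:3-con->non-separating} gives a non-separating induced odd cycle, necessarily of length at least $5$ as $G$ is triangle-free. Applying Lemma~\ref{lem:better non-separating cycle} (legitimate since $\delta(G)\ge d\ge6\ge4$) I would fix a non-separating induced odd cycle $C=v_0v_1\cdots v_{2s}v_0$ with $s\ge2$ such that every non-cut vertex $v$ of $G-V(C)$ satisfies $|N_G(v)\cap V(C)|\le2$, with equality only when $N_G(v)\cap V(C)=\{v_i,v_{i+2}\}$; that is exactly~2(c). Every non-cut vertex of $G-V(C)$ then has at least $d-2\ge4$ neighbours inside $G-V(C)$, so $G-V(C)$ has no vertex of degree $\le1$, hence no end-block of $G-V(C)$ is an edge or an isolated vertex, so all end-blocks are $2$-connected; that is~2(b). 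It remains to prove: if $G-V(C)$ is $2$-connected then $G$ has $d$ cycles of consecutive lengths, and if $G-V(C)$ is connected but not $2$-connected then the same holds unless $d\in\{6,7\}$ (in which case $C$ witnesses item~2).

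The workhorse is the ``four arcs'' construction from the proof of Theorem~\ref{nonseparating cycle->consecutive cycles}: if some vertex $w\notin V(C)$ has $N_G(w)\cap V(C)=\{v_i,v_{i+2}\}$, then going from $w$ to $v_{i+s+1}$ via $v_i$ or via $v_{i+2}$ and, in each case, around $C$ in either direction produces four $w$-$v_{i+s+1}$ paths of consecutive lengths $s,s+1,s+2,s+3$ that are internally disjoint from $G-V(C)$. Pasting these onto an edge leaving $C$ at $v_{i+s+1}$ and then onto admissible paths from Theorem~\ref{mainthm} inside $G-V(C)$ --- or inside a $2$-connected end-block $B$ of $G-V(C)$ together with a fixed path from the cut-vertex of $B$ through the rest of $G-V(C)$ to that edge, and using Lemma~\ref{three vertices} (with the cut-vertex as the exceptional vertex) if every outside neighbour of $v_{i+s+1}$ lies in $B$, just as in the closing cases of that proof --- gives at least $d$ cycles: $G-V(C)$, or $B$ off its cut-vertex, has minimum degree at least $d-2$, Theorem~\ref{mainthm} supplies $d-3$ admissible paths, and $(d-3)+4-1=d$ (far more when the common difference is $2$). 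So the whole statement is settled once such a $w$ can be located.

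The main obstacle is the opposite situation: no vertex of $G-C$ (and, in the disconnected-complement case, no admissible vertex of an end-block) has two neighbours on $C$. Then every such vertex has at least $d-1$ neighbours in $G-V(C)$, but the only arcs of $C$ available between a fixed pair of attachment points are the two arcs of lengths $s,s+1$ joining the near-antipodal vertices $v_0,v_s$; combining them with the $d-2$ admissible $x$-$y$ paths of Theorem~\ref{mainthm}, for $x\in N_G(v_0)\setminus V(C)$ and $y\in N_G(v_s)\setminus V(C)$, yields only $d-1$ consecutive cycle lengths when those paths have common difference one. Closing this single-cycle gap is the crux. When $G-V(C)$ is $2$-connected it must always be closed --- e.g.\ by additionally attaching at a neighbour of $v_{s-1}$ (providing arcs of lengths $s-1,s+2$), by exploiting a second near-antipodal pair, by replacing Theorem~\ref{mainthm} with Lemma~\ref{three vertices} when essentially one vertex has degree $d-1$, or, failing all of these, by exchanging an arc of $C$ for a path through $G-V(C)$ to get a better cycle; checking that one of these always succeeds is the delicate part. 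When $G-V(C)$ is not $2$-connected the same deficit travels along its block structure, but for $d\ge8$ the surplus $d-1\ge7$ together with one more application of Theorem~\ref{mainthm} or Lemma~\ref{three vertices} across a cut-vertex recovers the missing cycle, whereas for $d\in\{6,7\}$ it can genuinely fail --- and in that last case ($d\in\{6,7\}$, $G-V(C)$ connected but not $2$-connected, with 2(b) and 2(c) already established) the cycle $C$ is precisely the one demanded by item~2, completing the proof.
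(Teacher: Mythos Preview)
Your setup via Lemmas~\ref{lem:3-con->non-separating} and~\ref{lem:better non-separating cycle}, the verification of 2(b)--2(c), and the ``four arcs'' construction when some non-cut vertex $w$ of $G-V(C)$ has two neighbours on $C$ are all correct and match the paper in the $2$-connected case. Two genuine gaps remain.

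\emph{The four-arc step does not carry over to the non-$2$-connected case with the count you claim.} If $w\in V(B)\setminus\{b_B\}$ for an end-block $B$ and every neighbour of $v_{i+s+1}$ outside $C$ also lies in $V(B)\setminus\{b_B\}$, then Lemma~\ref{three vertices} in $B$ with exceptional vertex $b_B$ and minimum degree $d-2$ yields only $(d-3)-1=d-4$ admissible $w$--$y$ paths, giving $(d-4)+4-1=d-1$ cycles, one short. The closing case of Theorem~\ref{nonseparating cycle->consecutive cycles} survives this because there the ambient minimum degree is $k+1$; here it is only $d$. The paper in fact abandons the four-arc construction entirely once $G-V(C)$ is not $2$-connected.

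\emph{The ``no such $w$'' case is where the real content lies, and your list of fixes does not select one that works.} The paper's device is to attach at $v_0$ and $v_{s-1}$ (not $v_s$): the two arcs of $C$ then have lengths $s-1$ and $s+2$, and Lemma~\ref{plus3} --- which you never invoke --- converts $\ell\ge3$ admissible paths of common difference $r$ together with two arcs differing by $3$ into $\ell+3$ (if $r=1$) or $2\ell-2$ (if $r=2$) cycles of consecutive lengths. When $G-V(C)$ is $2$-connected this gives $\min\{d+1,2d-6\}\ge d$ for all $d\ge6$, cleanly closing the case you flagged as ``delicate.'' When $G-V(C)$ is not $2$-connected and $d\ge8$, one first seeks an end-block $B$ with an attachment $v_{i_x}$ such that $v_{i_x+s-1}$ has a neighbour outside $V(B)\setminus\{b_B\}$; then $\ell=d-3$ and $\min\{d,2d-8\}=d$. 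If no such $B$ exists, one shows that each end-block's attachment set $S_B\subseteq\mathbb Z_{2s+1}$ is closed under translation by $\pm(s-1)$ and that $i\pm(s-1)\notin S_{B'}$ for any other end-block $B'$; this forces $\gcd(s-1,2s+1)>1$, hence $=3$, and some end-block $B^*$ meets only one residue class modulo $3$. Every vertex of $V(B^*)\setminus\{b_{B^*}\}$ then has at most one neighbour on $C$, the degree in $B^*$ rises to $d-1$, and Lemma~\ref{three vertices} now gives $d-3$ paths --- enough to rerun Lemma~\ref{plus3}. This arithmetic on $\mathbb Z_{2s+1}$ is essential and is not present in your sketch.
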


\begin{proof}
	We may assume that $G$ does not contain $d$ cycles of consecutive lengths, for otherwise we are done.
	
	By Lemmas \ref{lem:3-con->non-separating} and \ref{lem:better non-separating cycle}, since $G$ does not contain a $K_3$ subgraph, there exists an induced odd cycle $C$ of length at least five, denoted by $v_0v_1...v_{2s}v_0$, such that for every non-cut vertex $v$ of $G-V(C)$, $\lvert N_G(v) \cap V(C) \rvert \leq 2$, and if $\lvert N_G(v) \cap V(C) \rvert = 2$, then $N_G(v) \cap V(C) =\{v_i,v_{i+2}\}$ for some $i \in {\mathbb Z}_{2s+1}$ and the indices are computed in ${\mathbb Z}_{2s+1}$.
	
	In particular, no vertex of $G-V(C)$ is of degree at most one in $G-V(C)$, since $d \geq 4$.
	So every end-block of $G-V(C)$ is $2$-connected.
	Note that for every end-block $B$ of $G-V(C)$, there exists at most one cut-vertex of $G-V(C)$ contained in $V(B)$, and if such vertex exists, we denote it by $b_B$.
	
So to prove this lemma, it suffices to prove that $d \in \{6,7\}$ and $G-V(C)$ is not $2$-connected.

We first suppose to the contrary that $G-V(C)$ is $2$-connected.

Suppose that there exists a vertex $x \in V(G)-V(C)$ adjacent in $G$ to at least two vertices in $V(C)$.
By symmetry, we may assume that $x$ is adjacent to $v_0$ and $v_2$.
Since $G$ is $3$-connected and $C$ is an induced cycle in $G$, $v_{s+1}$ is adjacent in $G$ to a vertex $y$ in $V(G)-V(C)$.
Since $\lvert V(C) \rvert \geq 5$, $v_{s+1} \not \in \{v_0,v_2\}$, so $x \neq y$.
Since $G-V(C)$ is $2$-connected, every vertex in $G-V(C)$ has degree at least $d-2$, so by Theorem \ref{mainthm}, there exist $d-3$ admissible paths $P_1,P_2,...,P_{d-3}$ in $G-V(C)$ from $x$ to $y$.
Let $Q_1,Q_2$ be the subpaths of $C$ with ends $v_{0}$ and $v_{s+1}$ of length $s+1$ and $s$, respectively.
Let $Q_3,Q_4$ be the subpaths of $C$ with ends $v_{2}$ and $v_{s+1}$ of length $s-1$ and $s+2$, respectively.
Let ${\mathcal C}=\{(P_i \cup Q_j)+xv_{0}+yv_{s+1}, (P_i \cup Q_k)+xv_{2}+yv_{s+1}: 1 \leq i \leq d-3, 1 \leq j \leq 2, 3 \leq k \leq 4\}$.
Then ${\mathcal C}$ contains $(d-3)+4-1=d$ cycles of consecutive lengths, a contradiction.

So every vertex $x \in V(G)-V(C)$ is adjacent in $G$ to at most one vertex in $V(C)$.
Since $G$ is connected, there exists a vertex $x'$ in $V(G)-V(C)$ adjacent in $G$ to a vertex in $V(C)$.
By symmetry, we may assume that $x'$ is adjacent to $v_0$.
Since $G$ is $3$-connected and $C$ is an induced cycle in $G$, $v_{s-1}$ is adjacent in $G$ to a vertex $y'$ in $V(G)-V(C)$.
Since $\lvert V(C) \rvert \geq 5$, $v_0 \neq v_{s-1}$, so $x \neq y$.
Since every vertex in $G-V(C)$ has degree at least $d-1$, by Theorem \ref{mainthm}, there exist $d-2$ admissible paths $P'_1,P'_2,...,P'_{d-2}$ in $G-V(C)$ from $x'$ to $y'$.
Let $Q'_1,Q'_2$ be the subpaths of $C$ with ends $v_{0}$ and $v_{s-1}$ of length $s-1$ and $s+2$, respectively.
Let ${\mathcal C}'=\{(P_i' \cup Q'_j)+xv_{0}+yv_{s-1}: 1 \leq i \leq d-2, 1 \leq j \leq 2\}$.
Since $d-2 \geq 3$, ${\mathcal C}'$ contains $\min\{d-2+3,2(d-2)-2\} \geq d$ cycles of consecutive lengths by Lemma \ref{plus3}, a contradiction.

Hence $G-V(C)$ is not $2$-connected.
It suffices to prove $d \in \{6,7\}$.
Suppose to the contrary that $d \geq 8$.

Suppose that there exists an end-block $B$ of $G-V(C)$ and a vertex $x \in V(B)-\{b_B\}$ adjacent in $G$ to $v_{i_x}$ in $V(C)$ for some $0 \leq i_x \leq 2s$, such that $v_{i_x+s-1}$ is adjacent in $G$ to a vertex $y \in V(G)-(V(C) \cup (V(B)-\{b_B\}))$, where the indices are computed in ${\mathbb Z}_{2s+1}$.
Since $(B,x,b_B)$ is a $2$-connected rooted graph of minimum degree at least $d-2$, Theorem \ref{mainthm} implies that there exist $d-3$ admissible paths in $B$ from $x$ to $b_B$, and hence by concatenating each of them with a fixed path in $G-(V(C) \cup (V(B)-\{b_B\}))$ from $b_B$ to $y$, there exist $d-3$ admissible paths $P''_1,P''_2,...,P''_{d-3}$ in $G-V(C)$ from $x$ to $y$.
Let $Q_1'',Q_2''$ be the subpaths of $C$ with ends $v_{i_x}$ and $v_{i_x+s-1}$ of length $s-1$ and $s+2$, respectively.
Let ${\mathcal C}''=\{(P''_i \cup Q''_j)+xv_{i_x}+yv_{i_x+s-1}: 1 \leq i \leq d-3, 1 \leq j \leq 2\}$.
Since $d-3 \geq 3$, by Lemma \ref{plus3}, ${\mathcal C}''$ contains $\min\{(d-3)+3, 2(d-3)-2\} = \min\{d,2d-8\}$ cycles of consecutive lengths.
Since $G$ does not contain $d$ cycles of consecutive lengths, $2d-8<d$.
Hence $d \in \{6,7\}$, a contradiction.

Hence for every end-block $B$ of $G-V(C)$ and every vertex $x \in V(B)-\{b_B\}$ adjacent in $G$ to $v_{i_x}$ for some $0 \leq i_x \leq 2s$, $N_G(v_{i_x+s-1}) \subseteq V(B)-\{b\}$, where the indices are computed in ${\mathbb Z}_{2s+1}$.
Similarly, for every end-block $B$ of $G-V(C)$ and every vertex $x \in V(B)-\{b_B\}$ adjacent in $G$ to $v_{i_x}$ for some $0 \leq i_x \leq 2s$, $N_G(v_{i_x-(s-1)}) \subseteq V(B)-\{b\}$, where the indices are computed in ${\mathbb Z}_{2s+1}$.

For every end-block $B$ of $G-V(C)$, let $S_B=\{i: 0 \leq i \leq 2s,v_i \in N_G(B-b_B)\}$.
Note that for every end-block $B$ and $i \in {\mathbb Z}_{2s+1}$, if $i \in S_B$, then $\{i+(s-1),i-(s-1)\} \subseteq S_B-S_{B'}$ for every end-block $B'$ of $G-V(C)$ other than $B$.
So if $s-1$ is relatively prime to $2s+1$, then $S_B = \{i: 0 \leq i \leq 2s\}$ for every end-block $B$, but there are at least two end-blocks of $G-V(C)$, a contradiction.

Hence $s-1$ is not relatively prime to $2s+1$.
So there exists a prime $p$ that divides $s-1$ and $2s+1$.
Hence $p$ divides $(2s+1)-2(s-1)=3$.
That is, $p=3$, and $3$ is the greatest common divisor of $2s+1$ and $s-1$.
So for every end-block $B$ and $i \in {\mathbb Z}$, if $i \in S_B$, then since $S_B$ contains $i+t(s-1)$ for every integer $t$, where the computation is in ${\mathbb Z}_{2s+1}$, $S_B$ contains $i+3t'$ for every integer $t'$.
Hence for every $i \in \{0,1,2\}$, either $S_B \supseteq \{i+3t: t \in {\mathbb Z}\}$ or $S_B \cap \{i+3t: t \in {\mathbb Z}\} = \emptyset$, where the computation is in ${\mathbb Z}_{2s+1}$.
Since there are at least two end-blocks in $G-V(C)$, there exists an end-block $B^*$ such that there uniquely exists $i^* \in \{0,1,2\}$ such that $S_{B^*} \cap \{i^*+3t: t \in {\mathbb Z}\} \neq \emptyset$.
This implies that every vertex in $B^*-b_{B^*}$ is adjacent in $G$ to at most one vertex in $V(C)$.

By symmetry, we may assume that $i^*=0$, and there exist $x^*,y^* \in V(B^*)-\{b_{B^*}\}$ such that $x^*v_0 \in E(G)$ and $y^*v_{s-1} \in E(G)$.
Since $G$ is $3$-connected, $x^*$ and $y^*$ can be chosen to be distinct vertices.
Hence $x^*,y^*,b_{B^*}$ are distinct vertices.
Since $B$ is $2$-connected and every vertex in $B^*$ other than $b_{B^*}$ is of degree at least $d-1$ in $B^*$,
by Lemma \ref{three vertices} there exist $d-3$ admissible paths $P_1^*,P_2^*,...,P_{d-3}^*$ in $B^*$ from $x^*$ to $y^*$.
Let $Q_1^*,Q_2^*$ be the subpaths of $C$ with ends $v_0$ and $v_{s-1}$ of length $s-1$ and $s+2$, respectively.
Let ${\mathcal C}^* = \{(P_i^* \cup Q_j^*)+v_0x^*+v_{s-1}y^*: 1 \leq i \leq d-3, 1 \leq j \leq 2\}$.
Since $d-3 \geq 3$, by Lemma \ref{plus3}, ${\mathcal C}^*$ contains $\min\{d-3+3, 2(d-3)-2\}$ cycles of consecutive lengths.
Since $G$ does not contain $d$ cycles of consecutive lengths, $2d-8<d$.
Hence $d \in \{6,7\}$, a contradiction.
This proves the lemma.
\end{proof}

\begin{lem} \label{3connnb}
Let $d$ be an integer with $d \geq 6$.
If $G$ is a $3$-connected non-bipartite graph with minimum degree at least $d$
that does not contain a $K_3$ subgraph, then $G$ contains $d$ admissible cycles.
Furthermore, if $d \geq 8$, then $G$ contains $d$ cycles of consecutive lengths;
and if $d \in \{6,7\}$, then $G$ contains cycles of all lengths modulo $d$.
\end{lem}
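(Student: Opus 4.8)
The plan is to feed $G$ to Lemma~\ref{no_K3_nb1}. If its first conclusion holds, then $G$ contains $d$ cycles of consecutive lengths; since cycles have length at least three, these lengths form an arithmetic progression of length $d$ with common difference one, so they are $d$ admissible cycles, and $d$ consecutive integers realize every residue class modulo $d$. Thus all three assertions follow at once, and this also settles $d\ge 8$ completely, because the second conclusion of Lemma~\ref{no_K3_nb1} can occur only when $d\in\{6,7\}$. So from now on $d\in\{6,7\}$ and $G$ is in the situation of the second conclusion: there is an induced odd cycle $C=v_0v_1\cdots v_{2s}v_0$ with $s\ge 2$, the graph $H:=G-V(C)$ is connected but not $2$-connected with every end-block $2$-connected, and every non-cut-vertex $v$ of $H$ has $|N_G(v)\cap V(C)|\le 2$, with equality only when $N_G(v)\cap V(C)=\{v_i,v_{i+2}\}$.

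In this case I would argue directly, reusing the path-routing scheme from the proof of Lemma~\ref{no_K3_nb1}: choose a vertex $x$ of $H$ adjacent to some $v_j$ on $C$; produce a long list of admissible $x$-$z$ paths inside $H$ for a suitable second vertex $z$ of $H$ also adjacent to $C$ (via Theorem~\ref{mainthm} in a $2$-connected piece of $H$, or in an end-block together with a fixed linking path, or via Lemma~\ref{three vertices} when both feet must lie in the same end-block); and close each such path up through two arcs of $C$. The leverage comes from the position of $z$'s neighbour on $C$: attaching $z$ at $v_{j\pm s}$ makes the two arcs of $C$ differ in length by one, whereas attaching it at $v_{j\pm(s-1)}$ makes them differ by three. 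When $\ge d-2$ admissible $H$-paths are available, closing through arcs differing by three and invoking Lemma~\ref{plus3} yields $\ge d$ cycles of consecutive lengths regardless of whether the common difference of the $H$-paths is one or two, so those sub-cases are done; the same holds when $x$ has two neighbours $v_i,v_{i+2}$ on $C$ or when the appropriate antipodal vertex of $C$ has a neighbour outside the current end-block, since four arcs of consecutive lengths then become available. The work in this step is to check that, as one runs over the block structure of $H$ and the positions the feet can take in the end-blocks, such a routing with $d-2$ paths (or the four-arc bonus) can always be set up---except in one rigid configuration.

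That rigid configuration---isolated already in the proof of Lemma~\ref{no_K3_nb1}: $3=\gcd(s-1,2s+1)$, every end-block of $H$ meeting $V(C)$ in a single residue class modulo $3$, and only $d-3$ admissible $x$-$y$ paths obtainable, inside an end-block $B^*$, between a neighbour of $v_0$ and a neighbour of $v_{s-1}$---is the main obstacle. Here $v_s$ lies in a residue class modulo $3$ different from that of $v_0$ and is unreachable from $B^*$, so only arcs of $C$ differing in length by three are at hand: the $d-3$ paths then give $d$ consecutive cycle lengths when their common difference is one, but merely $2(d-3)-2=2d-8$ consecutive lengths (plus two isolated ones) when it is two, short of the $d$ admissible cycles required. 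To finish I would adjoin a further family of cycles obtained by rerouting through a second end-block of $H$, which owns a different residue class modulo $3$ and hence attaches to a vertex of $C$ outside $\{v_0,\dots,v_{s-1}\}$, thereby contributing the extra lengths needed to complete a $d$-term progression and, when $d=6$, to realize all of $\mathbb Z_6$---a residue pattern no progression of common difference two can achieve. Carrying this supplementary step out uniformly, while tracking the parities and the rigid assignment of residue classes modulo $3$ to the end-blocks of $H$ against the fixed odd length $2s+1$ of $C$, is the heart of the proof, and $d=6$ is the tightest instance.
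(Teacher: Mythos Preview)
Your reduction via Lemma~\ref{no_K3_nb1} to the case $d\in\{6,7\}$ with $G-V(C)$ connected but not $2$-connected is exactly what the paper does. After that, however, you take a detour the paper avoids. You try to rerun the single-end-block analysis from the \emph{proof} of Lemma~\ref{no_K3_nb1} (feet at $v_j$ and $v_{j\pm(s-1)}$, arcs differing by three, Lemma~\ref{plus3}), isolate the ``rigid configuration'' where only $d-3$ admissible paths survive, and then announce that a second end-block will supply the missing lengths---but you do not actually carry out this last step, which you yourself call ``the heart of the proof.'' As written this is a gap.

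The paper never enters that casework. It observes that $G-V(C)$ has at least two end-blocks $B_1,B_2$ with cut-vertices $b_1,b_2$, picks in each $B_i$ a vertex $u_{B_i}\in V(B_i)-\{b_i\}$ maximizing $|N_G(u_{B_i})\cap V(C)|\in\{1,2\}$, and applies Theorem~\ref{mainthm} to each rooted graph $(B_i,u_{B_i},b_i)$ to obtain $d-|N_G(u_{B_i})\cap V(C)|-1$ admissible $u_{B_i}$--$b_i$ paths. Linking $b_1$ to $b_2$ by a fixed path in $G-V(C)$ and closing through one arc of $C$ already yields at least $2d-6\ge d$ admissible cycles (when both $|N_G(u_{B_i})\cap V(C)|=2$ one uses two arcs of $C$ through $v_0$ and $v_2$ differing by $2$ to recover the same count). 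No analysis of $\gcd(s-1,2s+1)$ or residue classes modulo $3$ is needed. For $d=6$ and the ``all residues mod $6$'' claim, the paper notes that the concatenated $u_{B_1}$--$u_{B_2}$ paths in $G-V(C)$ form an arithmetic progression of common difference two, and then closes through \emph{both} arcs of $C$ (one odd, one even) to obtain $\ge 5$ consecutive odd and $\ge 5$ consecutive even cycle lengths, which together hit every residue modulo $6$.

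So the missing idea in your proposal is simply: use two end-blocks from the outset and sum their admissible-path counts, rather than squeezing one end-block and then patching. This bypasses the rigid configuration entirely.
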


\begin{proof}
We may assume that $G$ does not contain $d$ cycles of consecutive lengths, for otherwise we are done.
By Lemma \ref{no_K3_nb1}, $d \in \{6,7\}$ and there exists an induced cycle $C$ in $G$, denoted by $v_0v_1v_2...v_{2s}v_0$, of length at least five such that
	\begin{itemize}
		\item $G-V(C)$ is connected but not $2$-connected,
		\item every end-block of $G-V(C)$ is $2$-connected, and
		\item for every non-cut vertex $v$ of $G-V(C)$, $\lvert N_G(v) \cap V(C) \rvert \leq 2$, and if $\lvert N_G(v) \cap V(C) \rvert = 2$, then $N_G(v) \cap V(C) =\{v_i,v_{i+2}\}$ for some $i \in {\mathbb Z}_{2s+1}$ and the indices are computed in ${\mathbb Z}_{2s+1}$.
	\end{itemize}
Since $d \in \{6,7\}$, to prove this lemma, it suffices to prove that $G$ contains $d$ admissible cycles, and prove that $G$ contains cycles of all lengths modulo $d$.
Note that if $d=7$ and $G$ contains $d$ admissible cycles, then $G$ contains cycles of all lengths modulo $d$.

Suppose to the contrary that either $G$ does not contain $d$ admissible cycles, or $d=6$ and $G$ does not contain cycles of all lengths modulo $d$.

Since $G-V(C)$ is not $2$-connected and every end-block of $G-V(C)$ is $2$-connected, for every end-block $B$ of $G-V(C)$, there exists exactly one vertex $b_B$ in $B$ such that $b_B$ is a cut-vertex of $G-V(C)$.
For every end-block $B$ of $G-V(C)$, let $u_B$ be a vertex in $B-\{b_B\}$ such that $\lvert N_G(u_B) \cap V(C) \rvert$ is as large as possible.
Note that for every end-block $B$ of $G-V(C)$, $(B,u_B,b_B)$ is a $2$-connected rooted graph of minimum degree at least $d-\lvert N_G(u_B) \cap V(C) \rvert$, so there exist $d-\lvert N_G(u_B) \cap V(C) \rvert-1$ admissible paths $P_{B,1},P_{B,2},...,P_{B,d-\lvert N_G(u_B) \cap V(C) \rvert-1}$ in $B$ from $u_B$ to $b_B$.
In addition, for every end-block $B$ of $G-V(C)$, $1 \leq \lvert N_G(u_B) \cap V(C) \rvert \leq 2$ since $u_B \in V(B)-\{b_B\}$.

Suppose that there exists an end-block $B_1$ of $G-V(C)$ such that $\lvert N_G(u_{B_1}) \cap V(C) \rvert = 1$.
Let $B_2$ be an end-block of $G-V(C)$ other than $B_1$.
Let $x \in N_G(u_{B_1}) \cap V(C)$.
Since $G$ is $3$-connected, $u_{B_2}$ can be chosen such that $N_G(u_{B_2}) \cap V(C) - \{x\} \neq \emptyset$.
Let $y \in N_G(u_{B_2}) \cap V(C)-\{x\}$.
Let $Q$ be a path in $C$ from $x$ to $y$.
Let $R$ be a path in $G-V(C)$ from $b_{B_1}$ to $b_{B_2}$.
Let ${\mathcal C} = \{(P_{B_1,i} \cup R \cup P_{B_2,j} \cup Q)+xu_{B_1}+yu_{B_2}: 1 \leq i \leq d-\lvert N_G(u_{B_1}) \cap V(C) \rvert-1, 1 \leq j \leq d-\lvert N_G(u_{B_2}) \cap V(C) \rvert-1\}$.
So ${\mathcal C}$ contains $(d-\lvert N_G(u_{B_1}) \cap V(C) \rvert-1)+ (d-\lvert N_G(u_{B_2}) \cap V(C) \rvert-1)-1 = 2d-4-\lvert N_G(u_{B_2}) \cap V(C) \rvert \geq 2d-6 \geq d$ admissible cycles.
Hence $d=6$ and $G$ does not contain $d$ cycles of consecutive lengths.
So the lengths of the cycles in ${\mathcal C}$ form an arithmetic progression of common difference two.
It follows that the set $\{P_{B_1,i}\cup R \cup P_{B_2,j}: 1\leq i\leq d-|N_G(u_{B_1})\cap V(C)|-1, 1\leq j\leq d-|N_G(u_{B_2})\cap V(C)|-1\}$ contains $2d-6$ paths whose of lengths form an arithmetic progression of common difference two from $u_{B_1}$ to $u_{B_2}$ in $G-V(C)$.
Let $Q_o$ be the odd path from $x$ to $y$ in $C$ and $Q_e$ be the even path from $x$ to $y$ in $C$.
By concatenating each of $Q_o$ and $Q_e$ with $P_{B_1,i}\cup R \cup P_{B_2,j} \cup xu_{B_1} \cup yu_{B_2}$, we could obtain $2d-6$ cycles of consecutive odd lengths and $2d-6$ cycles of consecutive even lengths.
Since $d=6$ is even, $G$ contains cycles of all lengths modulo $d$.

Hence for every end-block $B$ of $G-V(C)$, $\lvert N_G(u_B) \cap V(C) \rvert=2$.
Let $B_3,B_4$ be two distinct end-blocks of $G-V(C)$.
By symmetry, we may assume that $N_G(u_{B_3}) \cap V(C) = \{v_0,v_2\}$.
Since $G$ does not contain a $K_3$ subgraph,
$\lvert N_G(u_{B_4}) \cap V(C) \cap \{v_0,v_1\} \rvert\leq 1$, so there exists $z \in N_G(u_{B_4}) \cap V(C)-\{v_0,v_1\}$.
Let $Q_1$ be the path in $C$ from $v_0$ to $z$ containing $v_0v_1v_2$, and let $Q_2$ be the subpath of $Q_1$ from $v_2$ to $z$.
Note that $\lvert E(Q_1) \rvert = \lvert E(Q_2) \rvert+2$.
Let $R'$ be a path in $G-V(C)$ from $b_{B_3}$ to $b_{B_4}$.
Let ${\mathcal C}' = \{(P_{B_3,i} \cup R' \cup P_{B_4,j} \cup Q_1)+u_{B_3}v_0+u_{B_4}z, (P_{B_3,i} \cup R' \cup P_{B_4,j} \cup Q_2)+u_{B_3}v_2+u_{B_4}z: 1 \leq i \leq d-\lvert N_G(u_{B_3}) \cap V(C) \rvert-1, 1 \leq j \leq d-\lvert N_G(u_{B_4}) \cap V(C) \rvert-1\}$.
Since $\lvert E(Q_1) \rvert = \lvert E(Q_2) \rvert+2$, ${\mathcal C}'$ contains $(d-\lvert N_G(u_{B_3}) \cap V(C) \rvert-1) + (d-\lvert N_G(u_{B_4}) \cap V(C) \rvert-1)-1 + 2-1 = 2d-6 \geq d$ admissible cycles.
So $d=6$ and $G$ does not contain $d$ cycles of consecutive lengths.
Hence the lengths of these cycles form an arithmetic progression of common difference two.
It follows that $P_{B_3,i}\cup R' \cup P_{B_4,j}$ for all $1\leq i\leq d-|N_G(u_{B_3})\cap V(C)|-1$ and $1\leq j\leq d-|N_G(u_{B_4})\cap V(C)|-1$ contains $2d-7$ paths whose of lengths form an arithmetic progression of common difference two between $u_{B_3}$ to $u_{B_4}$ in $G-V(C)$.
Let $Q_o'$ and $Q_e'$ be the odd path and even path in $C$ from $z$ to $v_0$, respectively.
By concatenating each of $Q_o'$ and $Q_e'$ with $P_{B_3,i}\cup R' \cup P_{B_4,j}\cup u_{B_3}v_0 \cup u_{B_4}z$,
we obtain $2d-7$ cycles of consecutive odd lengths and $2d-7$ cycles of consecutive even lengths.
Since $d=6$ is even, $G$ contains cycles of all lengths modulo $d$, a contradiction.
This proves the lemma.
\end{proof}

Recall that for every positive integer $d$, $K_{d,d}^-$ is the graph obtained from $K_{d,d}$ by deleting an edge.

\begin{lem}\label{deanc3}
Let $d \geq 3$ be an integer.
Let $G$ be a $\max\{d,5\}$-connected graph of girth exactly four and of minimum degree at least $d$ that does not contain a cycle of length five.
If $G$ does not contain a $K_{d,d}^-$ subgraph, then $G$ contains $d$ admissible cycles.
\end{lem}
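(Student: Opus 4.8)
The plan is to run the same scheme as in Lemma~\ref{usingK_4-}, but with a large \emph{complete bipartite} subgraph playing the role of the clique: the girth-four hypothesis produces such a subgraph, the $C_5$-freeness controls how the rest of $G$ hangs off it, and the absence of a $K_{d,d}^-$ subgraph supplies the one extra unit of length that distinguishes ``$d-1$ admissible cycles'' (which one always gets from $\delta\ge d$, cf.\ \cite{LM} and Theorem~\ref{mainthm}) from the required ``$d$ admissible cycles''. Throughout I would freely use that $G$ is triangle-free, which makes the local structure considerably tamer than in Section~\ref{sec:admiss_path}.

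First I would fix a $4$-cycle (it exists because the girth equals four) and then choose, among all complete bipartite subgraphs $Q$ of $G$ with bipartition $(Q_1,Q_2)$ and $|Q_1|\ge|Q_2|\ge 2$, one maximising $|Q_1|+|Q_2|$, ties broken by $|Q_2|$; write $|Q_2|=s+1$. Triangle-freeness forces $Q$ to be induced, and in fact $G[V(Q)]=Q$ (an extra edge inside a part would close a triangle with any vertex of the other part). Since $G$ has no $K_{d,d}^-$ subgraph and $Q\supseteq K_{|Q_1|,|Q_2|}$, we get $|Q_2|=s+1\le d-1$; combined with $\delta(G)\ge d$ this already shows $V(G)\neq V(Q)$, so there is a component $C$ of $G-V(Q)$, and $\max\{d,5\}$-connectivity shows $C$ has no vertex of degree one in $C$ (hence every end-block of $C$ is $2$-connected) and attaches richly to $Q$. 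As in Lemmas~\ref{case1}--\ref{casebi}, maximality of $Q$ together with triangle-freeness bounds the number of neighbours any vertex of $G-V(Q)$ can have inside each $Q_i$; I would prove these bipartite analogues first (they are easier here, with no triangles to rule out).

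The new ingredient is that $C$, and the way it meets $Q$, is rigid: if a vertex of $C$ were adjacent to two vertices on the same side of $Q$, or if two adjacent vertices of $C$ were adjacent to opposite sides of $Q$, then with two spokes into $Q$ we would close a $5$-cycle. This rigidity lets me, on one hand, contract a suitable attachment set of $C$ into a single vertex so that the resulting rooted graph $(G',z,z')$ is $2$-connected of minimum degree at least $d-s$, whence Theorem~\ref{mainthm} gives $d-s-1$ admissible $z$-$z'$ paths (lengths in an arithmetic progression of common difference one or two); and, on the other hand, it forces a vertex $z\in V(C)$ together with $s+1$ internally disjoint paths through $Q$ from $z$ to a fixed endpoint of lengths $1,3,\dots,2s+1$, \emph{plus} one parity-breaking path differing in length by exactly one (obtained by a length-$3$ detour through a $4$-cycle of $Q$, in the spirit of Lemma~\ref{plus3}). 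Concatenating the $d-s-1$ paths from Theorem~\ref{mainthm} with these $s+1$ paths and closing up, the arithmetic-progression-sum argument of Lemma~\ref{lemconcatenat} yields cycles whose lengths form a progression of length at least $(d-s-1)+(s+1)-1=d-1$; feeding in the extra parity-breaking path fills the last gap and gives $d$ cycle lengths in a progression of common difference one or two, i.e.\ $d$ admissible cycles. The degenerate configurations ($s=1$, $|V(C)|=1$, or $C$ disconnected after removing a cut-vertex) would be handled separately, exactly as in Section~\ref{subsec:|C|=1} and the subsection following it.

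The main obstacle is the penultimate step. A clique gives paths of \emph{all} short lengths between two of its vertices, so in Lemma~\ref{usingK_4-} one simply pads each admissible path and is finished; a complete bipartite subgraph only gives paths of a \emph{single} parity between prescribed endpoints, so padding with $Q$ alone can never push the count past $d-1$. One has to extract a parity-breaking pair of paths (lengths differing by exactly one) from the interaction between $C$ and $Q$, and it is precisely here that the $C_5$-free hypothesis must be spent; doing this while simultaneously keeping the contracted rooted graph of minimum degree at least $d-s$ (so that Theorem~\ref{mainthm} still produces $d-s-1$ paths), and correctly dispatching the small and non-$2$-connected cases of $C$, is where the real work of the lemma lies.
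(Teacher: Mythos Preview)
Your framework matches the paper's: take a maximal complete bipartite $Q$, contract to a $2$-connected rooted graph, apply Theorem~\ref{mainthm}, and concatenate with paths through $Q$ via Lemma~\ref{lemconcatenat}. You also correctly see that the naive count falls one short of $d$. But the mechanism you propose for recovering that unit does not work.

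The rigidity claims are false. If $v\in V(G)-V(Q)$ has two neighbours $q,q'$ on the same side of $Q$, then for any $p$ on the opposite side the cycle $vqpq'v$ has length four, not five; similarly, adjacent $u,v\in V(C)$ with $uq_1,vq_2\in E(G)$ and $q_i\in Q_i$ give the $4$-cycle $uvq_2q_1u$. So $C_5$-freeness forbids neither configuration---indeed, vertices of $G-V(Q)$ with many neighbours on one side of $Q$ are precisely what drive the paper's argument---and a ``length-$3$ detour through a $4$-cycle of $Q$'' adds $2$, not $1$, to a path length, so it cannot change parity. There is in any case no parity to break: ``admissible'' allows common difference two, and the paper never leaves that setting.

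The paper recovers the missing unit as follows. Taking $s=|Q_1|\le|Q_2|$ (with $|Q_1|$ maximised first, then $|Q_2|$), maximality gives $|N_G(v)\cap Q_1|\le s-1$ and $|N_G(v)\cap Q_2|\le s$ for all $v\notin V(Q)$. One fixes a single vertex $z$ (if one exists) with $s$ neighbours in $V(Q)$---necessarily all in $Q_2$, forcing $|Q_2|\ge s+1$---and sets $Z=\{z\}$ or $Z=\emptyset$. Here is where $C_5$-freeness is actually spent: if some $v\notin V(Q)\cup Z$ were adjacent to both $z$ and a vertex $q_2\in Q_2$, then choosing $q_2'\in (N_G(z)\cap Q_2)\setminus\{q_2\}$ and any $q_1\in Q_1$ yields the $5$-cycle $v\,z\,q_2'\,q_1\,q_2\,v$. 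This, not any rigidity of $C$, makes the degree bookkeeping come out: contracting each $Q_i$ to a point in $G-Z$ yields a rooted graph of minimum degree at least $d-(s-2)-|Z|$, so Theorem~\ref{mainthm} gives $d-s+1-|Z|$ admissible paths from $Q_1$ to $Q_2$; meanwhile $G[V(Q)\cup Z]$ carries $s+|Z|$ admissible paths between any fixed $x\in Q_1$ and $y\in Q_2$, the extra one (when $Z\neq\emptyset$) detouring through $z$ to pick up one more vertex of $Q_2$. Lemma~\ref{lemconcatenat} then gives exactly $(d-s+1-|Z|)+(s+|Z|)-1=d$ admissible cycles. A preliminary case disposes of components of $G-(Q_t\cup Z)$ that miss $Q_{3-t}$ by the same accounting; the $K_{d,d}^-$ hypothesis enters there to rule out the degenerate subcase $|Q_t|=s=d-1$.
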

\begin{proof}
Suppose to the contrary that $G$ does not contain $d$ admissible cycles.
	
Since the girth of $G$ equals four, $G$ contains a $K_{2,2}$ subgraph.
So there exists a complete bipartite subgraph $Q$ of $G$ with bipartition $(Q_1,Q_2)$ such that
	\begin{itemize}
		\item[(i)] $2\leq|Q_1|\leq|Q_2|$,
		\item[(ii)] subject to (i), $|Q_1|$ is maximum, and
		\item[(iii)] subject to (i) and (ii), $|Q_2|$ is maximum.
	\end{itemize}
Let $s=|Q_1|$.
Note that $2\leq s \leq d-1$ since $G$ does not contain a $K_{d,d}$ subgraph.
Since $G$ does not contain a $K_3$ subgraph, $Q$ is an induced subgraph of $G$, and for every vertex $v$ of $G-V(Q)$, either $N_G(v)
\cap Q_1=\emptyset$ or $N_G(v) \cap Q_2 = \emptyset$.

For any $v\in V(G)-V(Q)$, $|N_G(v)\cap Q_1|\leq s-1$ by (iii), and $|N_G(v)\cap Q_2|\leq s$ by (ii).
If there exists a vertex $z \in V(G)-V(Q)$ such that $z$ is adjacent in $G$ to at least $s$ vertices in $V(Q)$, then let $Z=\{z\}$;
otherwise, let $Z$ be the empty set.
Note that if $Z \neq \emptyset$, then $N_G(z) \cap V(Q) \subseteq Q_2$ and $\lvert Q_2 \rvert \geq s+1$, since $G$ is of girth four
and by (i)-(iii).

Suppose there exists $t \in \{1,2\}$ such that there exists a component $M$ of $G-(Q_t \cup Z)$ disjoint from $Q_{3-t}$.
Since $G$ is $d$-connected, $\lvert Q_t \rvert + \lvert Z \rvert \geq d$.
If $\lvert Q_t \rvert=s$, then since $d \geq s+1$, $Z \neq \emptyset$ and $\lvert Q_t \rvert = s=d-1$, so $t=1$ and $\lvert Q_{3-t}
\rvert \geq s+1=d$, and hence $G[V(Q) \cup Z]$ contains a $K_{d,d}^-$ subgraph, a contradiction.
Hence $\lvert Q_t \rvert \geq s+1$.
In particular, $t=2$.
Note that when $s=2$, $\lvert Q_t \rvert \geq s+2$, for otherwise $\lvert Q_t \rvert+\lvert Z \rvert \leq (s+1)+1=4$, contradicting
that $G$ is 5-connected.
Since $G-Z$ is $4$-connected, we have that $|N_G(M)\cap Q_t|\geq 4$. If $s=2$, let $A$ be a subset of $N_G(M)\cap Q_t-N_{G}(z)$
with size two; if $s \geq 3$,  let $A$ be a subset of $N_G(M)\cap Q_t$ with size two. Note that $N_G(M) \cap Q_t- A \neq \emptyset.$
If $\lvert Q_t \rvert=s+1$ and $Z \neq \emptyset$, then let $\epsilon=1$; for otherwise, let $\epsilon=0$.
Let $G_M$ be the graph obtained from $G[V(M) \cup Q_t]$ by identifying all vertices in $A$ into a vertex $u_M$, identifying all
vertices in $Q_t-A$ into a vertex $v_M$, and deleting all resulting loops and parallel edges.
Since $\lvert Q_t \rvert \geq s+1$ and $t=2$, and since $G$ is of girth four and does not contain a 5-cycle, no vertex of $M$ is
adjacent in $G$ to both $Z$ and $Q_2$, so the minimum degree of $(G_M,u_M,v_M)$ is at least $d-(s-2)-\lvert Z \rvert+\epsilon$ by the
definition of $Z$.
So $(G_M,u_M,v_M)$ is a rooted graph of minimum degree at least $d-(s-2)-\lvert Z \rvert+\epsilon$.
Since $G-Z$ is $3$-connected and $M$ is a component of $G-(Q_t \cup Z)$, we know $(G_M,u_M,v_M)$ is a $2$-connected rooted graph of
minimum degree at least $d-s+2-\lvert Z \rvert+\epsilon$.
By Theorem \ref{mainthm}, there exist $d-s+1-\lvert Z \rvert+\epsilon$ admissible paths in $G_M$ from $u_M$ to $v_M$.
So there exist $d-s+1-\lvert Z \rvert+\epsilon$ admissible paths $P_{M,1},P_{M,2},...,P_{M,d-s+1-\lvert Z \rvert+\epsilon}$ in $G[V(M)
\cup Q_t]$ from $A$ to $Q_t-A$ internally disjoint from $V(Q) \cup Z$.
For each $i$ with $1 \leq i \leq d-s+1-\lvert Z \rvert+\epsilon$, let $\alpha_i$ be the ends of $P_{M,i}$ in $A$ and let $\beta_i$ be
the end of $P_{M,i}$ in $Q_t-A$.
Since $\lvert Q_t \rvert \geq s+1$ and $t=2$ and $Q$ is a complete bipartite graph, there exist $s+\lvert Z \rvert-\epsilon$
admissible paths $Q_{M,1},...,Q_{M,s+\lvert Z \rvert-\epsilon}$ in $G[V(Q) \cup Z]$ from $\alpha_i$ to $\beta_i$.
Then the set $\{P_{M,j} \cup Q_{M,k}: 1 \leq j \leq d-s+1-\lvert Z \rvert+\epsilon, 1 \leq k \leq s+\lvert Z \rvert-\epsilon\}$
contains $(d-s+1-\lvert Z \rvert+\epsilon)+(s+\lvert Z \rvert-\epsilon)-1=d$ admissible cycles, a contradiction.

So for every $t \in \{1,2\}$, every component of $G-(Q_t \cup Z)$ intersects $Q_{3-t}$.
Let $G'$ be the graph obtained from $G-Z$ by identifying all vertices in $Q_1$ into a vertex $u'$, identifying all vertices in $Q_2$
into a vertex $v'$, and deleting resulting loops and parallel edges.
Since $G$ is of girth four and does not contain a 5-cycle, no vertex of $G-(V(Q) \cup Z)$ is adjacent in $G$ to either both $Z$ and
$Q_2$ or both $Q_1$ and $Q_2$, so the minimum degree of $(G',u',v')$ is at least $d-(s-2)-\lvert Z \rvert$ by the definition of $Z$.
Since $G$ is $3$-connected, $G-Z$ is $2$-connected, so every cut-vertex of $G'$ is $u'$ or $v'$.
Since for every $t \in \{1,2\}$, every component of $G-(Q_t \cup Z)$ intersects $Q_{3-t}$, we know $G'$ is $2$-connected.
So $(G',u',v')$ is a $2$-connected rooted graph of minimum degree at least $d-s-\lvert Z \rvert+2$.
By Theorem \ref{mainthm}, there exist $d-s-\lvert Z \rvert+1$ admissible paths in $G'$ from $u'$ to $v'$.
So there exist $d-s-\lvert Z \rvert+1$ admissible paths $R_1,R_2,...,R_{d-s-\lvert Z \rvert+1}$ in $G-Z$ from $Q_1$ to $Q_2$
internally disjoint from $V(Q)$.
For every $i$ with $1 \leq i \leq d-s-\lvert Z \rvert+1$, let $x_i,y_i$ be the ends of $R_i$ in $Q_1,Q_2$, respectively.
Since $Q$ is a complete bipartite graph, for each $i$ with $1 \leq i \leq d-s-\lvert Z \rvert+1$, $G[V(Q) \cup Z]$ contains $s+\lvert
Z \rvert$ admissible paths $R_1',R_2',...,R_{s+\lvert Z \rvert}'$ from $x_i$ to $y_i$.
So the set $\{R_j \cup R'_k: 1 \leq j \leq d-s-\lvert Z \rvert+1, 1 \leq k \leq s+\lvert Z \rvert\}$ contains $d$ admissible cycles, a
contradiction.
This proves the lemma.
\end{proof}

\begin{lem} \label{binonsep}
Let $G$ be a $3$-connected bipartite graph.
If $G$ does not contain a cycle of length four, then $G$ contains a non-separating induced cycle $C$ such that for every non-cut-vertex $v$ of $G-V(C)$, $|N_G(v)\cap V(C)|\leq1$.
\end{lem}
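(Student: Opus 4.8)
The plan is to let $C$ be a non-separating induced cycle of $G$ with $|V(C)|$ minimum (such a cycle exists, since every $3$-connected graph contains a non-separating induced cycle; see \cite{T63,TT81}), and to prove that this $C$ has the required property. This is the bipartite, $C_4$-free analogue of Lemma \ref{lem:better non-separating cycle}, but easier: bipartiteness together with the absence of $4$-cycles rules out the exceptional ``distance two'' configuration entirely. Suppose for contradiction that some non-cut-vertex $v$ of $G-V(C)$ satisfies $|N_G(v)\cap V(C)|\geq 2$, and write $N_G(v)\cap V(C)=\{v_{a_1},\dots,v_{a_m}\}$ with $m\geq 2$, indexed in the cyclic order in which these vertices occur along $C$. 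For each $\ell$ (indices mod $m$), let $P_\ell$ be the arc of $C$ from $v_{a_\ell}$ to $v_{a_{\ell+1}}$; by construction no internal vertex of $P_\ell$ is adjacent to $v$.

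First I would record a length estimate. Since $G$ is bipartite, $v_{a_\ell}$ and $v_{a_{\ell+1}}$ lie in the same part, so the cycle formed by $P_\ell$ together with the edges $vv_{a_\ell}$ and $vv_{a_{\ell+1}}$ has even length; hence $|P_\ell|$ is even. It cannot equal $2$, since otherwise $v_{a_\ell}$ and $v_{a_{\ell+1}}$ would have a common neighbor on $C$ and $G$ would contain a $4$-cycle. Thus $|P_\ell|\geq 4$ for every $\ell$, and since $\sum_{\ell}|P_\ell|=|E(C)|=|V(C)|$ we obtain $|P_\ell|\leq |V(C)|-4(m-1)\leq |V(C)|-4$.

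Next, for each $\ell$ let $C^{(\ell)}$ be the cycle consisting of $P_\ell$ together with the vertex $v$ and the edges $vv_{a_\ell}$ and $vv_{a_{\ell+1}}$. This cycle is induced: its only possible chords are edges from $v$ to an internal vertex of $P_\ell$, of which there are none by the choice of $P_\ell$. Also $|V(C^{(\ell)})|=|P_\ell|+2\leq |V(C)|-2<|V(C)|$. The key step is to show that $C^{(\ell)}$ is non-separating, which then contradicts the minimality of $C$ and finishes the argument. Here the vertex set of $G-V(C^{(\ell)})$ is the disjoint union of $H:=V(G)\setminus(V(C)\cup\{v\})$ and $A_\ell:=V(C)\setminus V(P_\ell)$; moreover $G[H]=(G-V(C))-v$ is connected because $v$ is a non-cut-vertex of the connected graph $G-V(C)$, and $G[A_\ell]$ is a subpath of $C$ and hence connected. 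So it suffices to produce an edge between $A_\ell$ and $H$. Choose some $\ell'\neq\ell$ and an internal vertex $w$ of $P_{\ell'}$ (one exists because $|P_{\ell'}|\geq 4$); then $w\in A_\ell$. Since $C$ is induced, the only neighbors of $w$ on $C$ are its two cycle-neighbors, and $w$ is not adjacent to $v$ because $w$ is internal to $P_{\ell'}$; as $d_G(w)\geq 3$, it follows that $w$ has a neighbor in $H$. This is the desired edge (and in particular $H\neq\emptyset$), so $G-V(C^{(\ell)})$ is connected.

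The only genuine obstacle is the non-separation verification just sketched; the rest is bookkeeping. I would also dispose of the degenerate situations: if $G-V(C)$ has no vertices, or exactly one vertex, the conclusion holds vacuously or reduces to the remark that $G[A_\ell]$ is already all of $G-V(C^{(\ell)})$ and hence connected. Finally, since $C$ is induced, the statement that no non-cut-vertex of $G-V(C)$ has at least two neighbors on $C$ is exactly the assertion of the lemma, so this completes the proof.
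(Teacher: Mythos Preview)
Your proof is correct, but it proceeds differently from the paper's argument. The paper does not invoke the existence of non-separating induced cycles as a black box; instead it chooses a cycle $C$ so that the largest component of $G-V(C)$ is as large as possible, and subject to that $|V(C)|$ is minimal. It then shows in turn that $C$ is induced (a chord would shorten $C$), that $C$ is non-separating (a second component $M'$ could be used to reroute $C$ and enlarge the main component $M$), and finally that the neighbour bound holds (a non-cut-vertex $v$ with two neighbours on $C$ yields, via the arc $R_2+vx+vy$, a cycle whose complement absorbs $M-v$ together with internal vertices of the discarded arc $R_1$, again enlarging the main component). So the paper's primary optimisation criterion is ``size of the largest component'', and all three properties fall out of it.

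Your approach front-loads the work by citing Tutte/Thomassen--Toft for the existence of a non-separating induced cycle and then minimising $|V(C)|$ within that class. This trades self-containedness for a cleaner endgame: you only need a single minimality criterion, and the contradiction comes from exhibiting $C^{(\ell)}$ as a strictly smaller non-separating induced cycle, which you verify directly. The length estimate $|P_\ell|\geq 4$ (even by bipartiteness, not $2$ by the $C_4$-free hypothesis) is exactly what is needed both to make $C^{(\ell)}$ shorter and to guarantee an internal vertex $w$ of some other arc $P_{\ell'}$ with a neighbour in $H$. Both routes are short; yours is perhaps conceptually tidier, while the paper's avoids the external citation and makes the argument parallel to the non-bipartite case treated in Lemma~\ref{lem:better non-separating cycle}.
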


\begin{proof}
Let $C$ be a cycle of $G$ such that
	\begin{itemize}
		\item[(i)] the largest component of $G-V(C)$ is as large as possible, and
		\item[(ii)] subject to (i), $\lvert V(C) \rvert$ is as small as possible.
	\end{itemize}
Let $M$ be a component of $G-V(C)$ with $\lvert V(M) \rvert$ maximum.

If there exists a chord $e$ of $C$, then one of $P_e+e$ and $Q_e+e$ is a cycle shorter than $C$ such that $M$ is a component of the graph obtained from $G$ by deleting this cycle, a contradiction, where $P_e,Q_e$ are the two subpaths of $C$ with ends the same as $e$.
Hence $C$ is an induced cycle.

Suppose there exists a component $M'$ of $G-V(C)$ other than $M$.
Let $A=N_G(M)\cap V(C)$ and $B=N_G(M')\cap V(C)$.
Since $G$ is $3$-connected, $\min\{|A|,|B|\} \geq 3$.
Since $\lvert A \rvert \geq 3$ and $\lvert B \rvert \geq 2$, there exists a subpath $Q$ of $C$ whose ends belong to $B$ such that some internal vertex of $Q$ belongs to $A$.
Since $M'$ is connected, there exists a path $Q'$ from one end of $Q$ to another end of $Q$ such that all internal vertices belong to $V(M')$.
Let $Q''$ be the subpath of $C$ with the same ends as $Q$ but internally disjoint from $Q$.
Then $Q' \cup Q''$ is a cycle in $G$ such that some component of $G-V(Q' \cup Q'')$ contains $M$ and a vertex in $A$, contradicting (i).

Hence $C$ is a non-separating cycle in $G$.
Suppose that there exists a non-cut-vertex $v$ of $G-V(C)$ such that $\lvert N_G(v) \cap V(C) \rvert \geq 2$.
Let $x,y$ be distinct vertices in $N_G(v) \cap V(C)$ such that no internal vertex of $R_1$ belongs to $N_G(v) \cap V(C)$, where $R_1,R_2$ are the two subpaths of $C$ with ends $x$ and $y$.
If $\lvert E(R_1) \rvert \leq 2$, then $R_1+vx+vy$ is a cycle of length at most four, contradicting that $G$ is a bipartite graph with no 4-cycle.
So $\lvert E(R_1) \rvert \geq 3$.
Hence $R_2+vx+vy$ is a cycle shorter than $C$.
Since $\lvert E(R_1) \rvert \geq 3$, there exist distinct internal vertices $x',y'$ of $R_1$.
Since $C$ is an induced cycle and every vertex of $G$ has degree at least three, $N_G(x')-V(C) \neq \emptyset \neq N_G(y')-V(C)$.
Since $C$ is a non-separating cycle, $N_G(x') \cap V(M) = N_G(x')-V(C) \neq \emptyset \neq N_G(y')-V(C) = N_G(y') \cap V(M)$.
Since $x',y'$ are internal vertices of $R_1$, $\{x',y'\} \cap N_G(v) = \emptyset$.
So $N_G(x') \cap V(M)-\{v\} \neq \emptyset$ and $N_G(y') \cap V(M)-\{v\} \neq \emptyset$.
Since $v$ is not a cut-vertex of $G-V(C)$, $M-v$ is connected.
So some component of $G-V(R_2+vx+vy)$ contains $(V(M)-\{v\}) \cup \{x',y'\}$, contradicting (i).
This proves the lemma.
\end{proof}

\begin{lem}\label{deanbi}
Let $d\geq5$ be an integer.
Let $G$ be a $3$-connected bipartite graph of minimum degree at least $d$.
If $G$ does not contain cycles of length four, then $G$ contains $d$ admissible cycles.
\end{lem}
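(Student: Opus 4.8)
The plan is to run the same non-separating-cycle argument used for Theorem~\ref{nonseparating cycle->consecutive cycles} and Lemma~\ref{3connnb}, simplified and adjusted to the bipartite setting. Since $G$ is bipartite and has no $4$-cycle it has girth at least $6$, and every family of cycles whose lengths form an arithmetic progression of length at least two must have common difference $2$; so here ``$d$ admissible cycles'' means $d$ cycles of consecutive even lengths. First I would apply Lemma~\ref{binonsep} to obtain a non-separating induced cycle $C=v_0v_1\cdots v_{2s-1}v_0$ with $2s\ge 6$ (so $s\ge 3$) such that every non-cut-vertex of $H:=G-V(C)$ has at most one neighbour on $C$. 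Because $C$ is induced, each $v_i$ has at least $d-2\ge 3$ neighbours in $H$; and because a non-cut-vertex $v$ of $H$ has all its $H$-neighbours inside its own block and at most one neighbour on $C$, it has degree at least $d-1$ in $H$ and in that block. In particular no end-block of $H$ is a vertex or an edge, so every end-block of $H$ is $2$-connected.

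Next I would split on whether $H$ is $2$-connected. If it is, pick any $v_p\in V(C)$ and put $v_q=v_{p+s-2}$; then the two $v_p$--$v_q$ subpaths of $C$ have lengths $s-2$ and $s+2$, which differ by $4$. Since $N_G(v_p)\cap V(H)$ and $N_G(v_q)\cap V(H)$ each have at least three elements, choose distinct vertices $x\in N_G(v_p)\cap V(H)$ and $y\in N_G(v_q)\cap V(H)$. Then $(H,x,y)$ is a $2$-connected rooted graph of minimum degree at least $d-1$, so Theorem~\ref{mainthm} supplies $d-2$ admissible $x$--$y$ paths in $H$, whose lengths form an arithmetic progression of common difference $2$ since $H$ is bipartite. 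Prepending $v_px$ and appending $yv_q$ turns these into $d-2$ $v_p$--$v_q$ paths internally disjoint from $V(C)$ with lengths in arithmetic progression of common difference $2$; pasting each of them with each of the two $C$-subpaths of lengths $s-2$ and $s+2$ then produces $d$ cycles of consecutive even lengths, because adjoining to $\ell\ge 2$ consecutive even numbers their translates by $+4$ yields $\ell+2$ consecutive even numbers.

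If $H$ is not $2$-connected, it has at least two end-blocks; fix two distinct ones $B_1,B_2$ with cut-vertices $b_1,b_2$. For each $i$ the set $V(B_i)-\{b_i\}$ must be adjacent to $V(C)$, since otherwise $b_i$ would be a cut-vertex of $G$, contradicting $3$-connectivity; so pick $u_i\in V(B_i)-\{b_i\}$ with $u_i$ adjacent to some $v_{p_i}\in V(C)$. As $(B_i,u_i,b_i)$ is a $2$-connected rooted graph of minimum degree at least $d-1$, Theorem~\ref{mainthm} yields $d-2$ admissible $u_i$--$b_i$ paths $P^i_1,\dots,P^i_{d-2}$ in $B_i$, with lengths in arithmetic progression of common difference $2$. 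Now fix a $b_1$--$b_2$ path $R$ in $H-((V(B_1)\setminus\{b_1\})\cup(V(B_2)\setminus\{b_2\}))$ and a $v_{p_2}$--$v_{p_1}$ path $Q$ in $C$ (trivial if $v_{p_1}=v_{p_2}$). The concatenations $v_{p_1}u_1\cup P^1_i\cup R\cup P^2_j\cup u_2v_{p_2}\cup Q$ for $1\le i,j\le d-2$ are cycles of $G$ whose lengths, by the sumset fact underlying Lemma~\ref{lemconcatenat}, form an arithmetic progression of common difference $2$ and length at least $2d-5\ge d$; so $G$ again contains $d$ admissible cycles.

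The bookkeeping that remains is routine: the concatenations are genuine cycles because the admissible paths handed down by Theorem~\ref{mainthm} live inside $H$, or inside a single end-block, and hence meet $V(C)$ only in the prescribed endpoints, and the interiors of distinct end-blocks are vertex-disjoint. I expect the only genuinely delicate points to be in the non-$2$-connected case---verifying through $3$-connectivity that two end-blocks with interiors meeting $C$ exist, and noting that the degenerate case $v_{p_1}=v_{p_2}$ still yields a cycle---and the observation in the $2$-connected case that one must use two $C$-subpaths whose lengths differ by $4$ rather than by $2$, since a single bundle of $d-2$ admissible paths with a $+2$ gadget would give only $d-1$ cycles.
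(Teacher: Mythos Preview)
Your proposal is correct and follows essentially the same route as the paper's proof: apply Lemma~\ref{binonsep} to obtain the non-separating induced cycle $C$ with the one-neighbour property for non-cut-vertices of $G-V(C)$, then split on whether $G-V(C)$ is $2$-connected, in each case invoking Theorem~\ref{mainthm} to produce $d-2$ admissible paths and completing them through $C$ to cycles. The only cosmetic differences are that the paper fixes the specific pair $v_0,v_{s-2}$ rather than a generic $v_p,v_{p+s-2}$, and in the non-$2$-connected case the paper uses $3$-connectivity to arrange $v_{p_1}\neq v_{p_2}$ whereas you allow equality and note that the concatenation is still a cycle; neither changes the argument.
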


\begin{proof}
Suppose to the contrary that $G$ does not contain $d$ admissible cycles.
By Lemma \ref{binonsep}, there exists a positive integer $s$ and an induced non-separating cycle $C=v_0v_1\ldots v_{2s-1}v_0$ in $G$ such that for every non-cut-vertex of $G-V(C)$, it is adjacent in $G$ to at most one vertex in $V(C)$.
Since $G$ is a bipartite graph with no 4-cycle, $s\geq3$.
For any any $0 \leq i <j \leq 2s-1$, let $Q_{i,j}$ and $Q_{i,j}'$ be the two subpaths of $C$ with ends $v_i,v_j$.

Suppose $G-V(C)$ is $2$-connected.
Since $C$ is an induced non-separating cycle and $G$ is of minimum degree at least $d \geq 4$, there exist distinct vertices $x,y$ in $V(G)-V(C)$ such that $\{xv_0,yv_{s-2}\} \in E(G)$.
Since $(G-V(C),x,y)$ is a $2$-connected rooted graph of minimum degree at least $d-1$, there exist $d-2$ admissible paths $P_1,P_2,...,P_{d-2}$ in $G-V(C)$ from $x$ to $y$ by Theorem \ref{mainthm}.
Note that $\lvert \lvert E(Q_{0,s-2}) \rvert - \lvert E(Q_{0,s-2}') \rvert \rvert = 4$.
Since $d-2>2$ and $G$ is bipartite, the set $\{(P_i \cup Q_{0,s-2})+xv_0+yv_{s-2}, (P_i \cup Q_{0,s-2}')+xv_0+yv_{s-2}: 1 \leq i \leq d-2\}$ contains $d$ admissible cycles, a contradiction.

So $G-V(C)$ is not $2$-connected.
In particular, there exist two distinct end-blocks $B_1,B_2$ of $G-V(C)$.
Since $G$ is $3$-connected, each $B_1,B_2$ is $2$-connected.
For $i \in \{1,2\}$, let $b_i$ be the cut-vertex of $G-V(C)$ contained in $V(B_i)$.
Since $G$ is $2$-connected, for each $i \in \{1,2\}$, there exists an integer $r_i$ with $0 \leq r_i \leq 2s-1$ and a vertex $u_i$ in $V(B_i)-\{b_i\}$ such that $u_iv_{r_i} \in E(G)$.
Since $G$ is $3$-connected, $r_1$ and $r_2$ can be chosen to be distinct.
For $i \in \{1,2\}$, since $(B_i,u_i,b_i)$ is a $2$-connected rooted graph of minimum degree at least $d-1$, there exist $d-2$ admissible paths $P_{i,1},P_{i,2},...,P_{i,d-2}$ in $B_i$ from $u_i$ to $b_i$.
Let $Q$ be a path in $G-V(C)$ from $b_1$ to $b_2$ internally disjoint from $V(B_1) \cup V(B_2)$.
Then the set $\{(P_{1,i} \cup Q \cup P_{2,j} \cup Q_{r_1,r_2})+u_1v_{r_1}+u_2v_{r_2}: 1 \leq i \leq d-2, 1 \leq j \leq d-2\}$ contains $2(d-2)-1=2d-5 \geq d$ admissible cycles, a contradiction.
This proves the lemma.
\end{proof}

For every graph $H$, a {\it 1-subdivision} of $H$ is a graph that is obtained from $H$ by subdividing each edge exactly once.

\begin{lem} \label{1subdivK4}
	Let $G$ be a graph of girth at least five.
	Let $H$ be a subgraph of $G$ that is a 1-subdivision of $K_4$.
	If there exists a vertex in $V(G)-V(H)$ adjacent in $G$ to two vertices in $V(H)$, then $G$ contains a cycle of length five or ten.
\end{lem}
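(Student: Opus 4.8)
The plan is to use the girth hypothesis to pin down exactly how an external vertex can attach to $H$, and then to close up a suitable $H$-path through that vertex into a cycle of length $5$ or $10$.

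First I would set up notation: write $w_1,w_2,w_3,w_4$ for the branch vertices of $H$ (the images of the vertices of $K_4$) and $s_{ij}$ for the subdivision vertex lying on the path of $H$ between $w_i$ and $w_j$. A routine computation of distances in $H$ yields $\mathrm{dist}_H(w_i,w_j)=2$, $\mathrm{dist}_H(w_i,s_{ij})=1$, $\mathrm{dist}_H(w_i,s_{jk})=3$ whenever $i\notin\{j,k\}$, $\mathrm{dist}_H(s_{ij},s_{ik})=2$, and $\mathrm{dist}_H(s_{ij},s_{kl})=4$ whenever $\{i,j\}\cap\{k,l\}=\emptyset$. Now fix a vertex $v\in V(G)-V(H)$ with two neighbours $a,b\in V(H)$. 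Any path $P$ in $H$ from $a$ to $b$ of length $\ell$, together with the edges $va$ and $vb$, forms a cycle of length $\ell+2$ in $G$ (its vertices are distinct since $v\notin V(H)$), so the girth hypothesis forces $\mathrm{dist}_H(a,b)\geq 3$. By the distance list, the unordered pair $\{a,b\}$ must then be of one of exactly two forms.

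In the first form, $\{a,b\}=\{w_i,s_{jk}\}$ with $i\notin\{j,k\}$; then $H$ contains the length-$3$ path $w_i s_{ij} w_j s_{jk}$, and closing it through $v$ gives a cycle of length $5$ in $G$. In the second form, $\{a,b\}=\{s_{ij},s_{kl}\}$ with $\{i,j,k,l\}=\{1,2,3,4\}$. Here the shortest $a$--$b$ path in $H$ has length $4$, which would only produce a $6$-cycle and hence does not suffice; instead I would take the longer path
\[
s_{ij}\,w_i\,s_{ik}\,w_k\,s_{jk}\,w_j\,s_{jl}\,w_l\,s_{kl}
\]
of length $8$, which visits $9$ of the $10$ vertices of $H$ (it omits only $s_{il}$) and is therefore an honest path; closing it through $v$ gives a cycle of length $10$ in $G$. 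Either way $G$ contains a cycle of length $5$ or $10$, as required.

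The hard part is really just locating the correct path in the second form: the naive choice of a shortest path produces a forbidden $6$-cycle, so one has to observe that the $1$-subdivision of $K_4$ also carries a path of length exactly $8$ between the midpoints of two disjoint edges — equivalently, a Hamilton path of $K_4$ avoiding those two edges, such as $i\,k\,j\,l$. This is a finite check on a ten-vertex graph, but it is the one step that is not pure bookkeeping; everything else is immediate from the girth hypothesis.
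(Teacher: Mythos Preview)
Your proof is correct and follows essentially the same approach as the paper's: both analyse how the external vertex $v$ can attach to $V(H)$, use the girth hypothesis to rule out short-distance attachments, and then exhibit the length-$10$ cycle in the ``opposite subdivision vertices'' case by taking a near-Hamiltonian path through $H$. The only organisational difference is that the paper first disposes of the case $\mathrm{girth}(G)=5$ trivially (some $5$-cycle exists somewhere) and then works under the stronger assumption $\mathrm{girth}(G)\geq 6$, which automatically eliminates your Case~1; you instead construct the $5$-cycle through $v$ explicitly in that case. Both routes arrive at the same final construction, and your explicit description of the length-$8$ path is exactly what the paper encodes by ``$(H-\{z\})+vx+vy$ has a Hamiltonian cycle of length ten.''
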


\begin{proof}
	We may assume $G$ is of girth at least six, for otherwise we are done.
	Let $v$ be a vertex in $V(G)-V(H)$ adjacent in $G$ to two vertices $x,y$ in $V(H)$.
	Let $S$ be the set of vertices of $H$ of degree three.
	Since $G$ has girth at least five, at least one of $x,y$ does not belong to $S$.
	Then since $G$ has girth at least six, both $x,y$ do not belong to $S$.
	So there exist edges $e,e'$ of $K_4$ such that $x$ and $y$ are obtained by subdividing $e$ and $e'$, respectively.
	Since $G$ has girth at least five, $e$ and $e'$ form a matching in $K_4$.
	Let $z$ be a vertex of $H$ obtained by subdividing an edge other than $e,e'$.
	Then $(H-\{z\})+vx+vy$ has a Hamiltonian cycle of length ten.
	This proves the lemma.
\end{proof}

We say a graph is a {\it theta graph} is a subdivision of $K_4^-$.
The {\it branch vertices} of a theta graph are the vertices of degree at least three. 
A subgraph $H$ of a graph $G$ is {\it spanning} if $V(H)=V(G)$.

\begin{lem} \label{mintheta}
Let $G$ be a graph of girth at least six that does not contain a cycle of length ten.
Let $H$ be a subgraph of $G$ isomorphic to a theta graph such that $\lvert V(H) \rvert$ is minimum.
Then the following hold.
	\begin{enumerate}
		\item $H$ is an induced subgraph of $G$.
		\item There exists at most one vertex of $G-V(H)$ adjacent in $G$ to at least two vertices in $V(H)$.
		\item If there exists a vertex $v$ of $G-V(H)$ adjacent in $G$ to at least two vertices in $V(H)$, then $G[V(H) \cup \{v\}]$ is isomorphic to a 1-subdivision of $K_4$.
	\end{enumerate}
\end{lem}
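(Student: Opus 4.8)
The plan is to establish statements (1), (3), (2) in this order, since (3) uses (1) and (2) follows quickly from (3). Throughout, write $H$ as the union of three internally disjoint paths $P_1,P_2,P_3$ joining its two branch vertices $x,y$; every cycle of $H$ has length at least six, so $|P_i|+|P_j|\ge 6$ whenever $i\ne j$.

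\emph{Statement (1).} Suppose $H$ has a chord $e=uv$. If $u,v$ lie on a common $P_i$, replace the subpath $uP_iv$ (which has length at least two since $G$ is simple) by $e$ to obtain a shorter $x$--$y$ path; together with the other two $P_j$ this is a theta on strictly fewer vertices, contradicting minimality. If $u,v$ lie on two different paths and are not branch vertices, then $\{x,y,u,v\}$ together with the relevant subpaths of $P_1,P_2,P_3$ and the edge $e$ is a subdivision of $K_4$ in $G$; deleting the interior of any one of its six branch paths yields a theta, and since $|P_1|+|P_2|\ge 6$ at least one of the four paths $xP_1u,uP_1y,xP_2v,vP_2y$ has an interior vertex, so deleting it gives a strictly smaller theta. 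The cases where $u$ or $v$ is a branch vertex are handled the same way.

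\emph{Statement (3).} Let $v\in V(G)-V(H)$ be adjacent in $G$ to at least two vertices of $H$. One first checks that no neighbor of $v$ in $H$ is a branch vertex and that no two neighbors of $v$ lie on the same $P_i$: otherwise, for two distinct indices $j$ one can build a theta in $G$ containing $v$ whose vertex set avoids $\mathrm{int}(P_j)$, forcing $|P_j|\le 2$ for two distinct $j$ and contradicting $|P_j|+|P_{j'}|\ge 6$. Hence the neighbors of $v$ in $H$ are internal vertices on distinct paths, so there are exactly two or three of them. If there were three, one internal to each $P_i$, then the theta on $\{x,v\}$ with the three paths running from $x$ along each $P_i$ to the corresponding neighbor and then to $v$, together with the symmetric theta on $\{y,v\}$, would (unless one of them were strictly smaller than $H$) force every $P_i$ to have length two, i.e.\ $H\cong K_{2,3}$, contradicting girth at least six. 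So $v$ has exactly two neighbors $a\in\mathrm{int}(P_1)$ and $b\in\mathrm{int}(P_2)$. Writing $P_1^x,P_1^y$ for the subpaths of $P_1$ from $a$ to $x$ and to $y$, and $P_2^x,P_2^y$ similarly at $b$, the five triples $\{P_1^x,\,P_1^y\cup P_3,\,avb\cup P_2^x\}$, $\{P_1^y,\,P_1^x\cup P_3,\,avb\cup P_2^y\}$, $\{P_2^x,\,P_2^y\cup P_3,\,bva\cup P_1^x\}$, $\{P_2^y,\,P_2^x\cup P_3,\,bva\cup P_1^y\}$ and $\{avb,\,P_1^x\cup P_2^x,\,P_1^y\cup P_2^y\}$ are triples of internally disjoint paths forming thetas (on $\{a,x\}$, $\{a,y\}$, $\{b,x\}$, $\{b,y\}$, $\{a,b\}$ respectively) that all contain $v$ and avoid $\mathrm{int}(P_2^y),\mathrm{int}(P_2^x),\mathrm{int}(P_1^y),\mathrm{int}(P_1^x),\mathrm{int}(P_3)$ respectively; minimality of $|V(H)|$ then gives $|P_1^x|,|P_1^y|,|P_2^x|,|P_2^y|\le 2$ and $|P_3|\le 2$. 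Combined with $|P_1|+|P_3|\ge 6$ and $|P_2|+|P_3|\ge 6$, this forces $|P_3|=2$ and $|P_1|=|P_2|=4$ with $a,b$ the midpoints of $P_1,P_2$. Since $H$ is induced by (1) and $v$ has only $a,b$ as neighbors in $H$, the graph $G[V(H)\cup\{v\}]$ equals $H$ with the path $avb$ added, which is exactly the $1$-subdivision of $K_4$ on branch vertices $x,y,a,b$.

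\emph{Statement (2).} If $v_1\ne v_2$ were two vertices of $V(G)-V(H)$ each adjacent to at least two vertices of $H$, then by (3) the graph $H_1:=G[V(H)\cup\{v_1\}]$ is a $1$-subdivision of $K_4$, while $v_2\in V(G)-V(H_1)$ is adjacent to at least two vertices of $V(H)\subseteq V(H_1)$; since $G$ has girth at least $5$, Lemma~\ref{1subdivK4} produces a cycle of length $5$ or $10$, contradicting that $G$ has girth at least six and no cycle of length ten. The one piece of real work is the bookkeeping in (3) --- verifying that each of the listed triples is internally disjoint and pinning down exactly which vertex (or vertices) of $H$ it misses --- but the mechanism is uniform throughout: a chord of $H$, or any external vertex with at least two neighbors on $H$, can always be rerouted into a strictly smaller theta unless the configuration is the rigid $1$-subdivision of $K_4$.
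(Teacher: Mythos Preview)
The proposal is correct and follows essentially the same approach as the paper: both use minimality of $|V(H)|$ to derive $|P_3|\le 2$ and $|P_i^x|,|P_i^y|\le 2$ via suitable alternative thetas, then combine with the girth $\ge 6$ lower bounds to force the rigid structure; the paper's thetas ``$H+av+bv$ minus the interior of $L_1$'' etc.\ are precisely the ones you list explicitly with branch vertices $\{a,x\},\{a,y\},\{b,x\},\{b,y\},\{a,b\}$. Your treatment of (1) is more explicit than the paper's one-line appeal to girth, and you separately dispose of the three-neighbor case (which the paper absorbs into its final ``girth at least six'' check on $G[V(H)\cup\{v\}]$), but the underlying mechanism is identical.
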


\begin{proof}
Suppose that $H$ is not induced.
Then there exists $e \in E(G)-E(H)$ with both ends in $V(H)$.
Since the girth of $G$ is at least six, there exists a subgraph $H'$ of $G$ with $V(H') \subset V(H)$ such that $H'$ is a theta graph, contradicting the minimality of $\lvert V(H) \rvert$.

So $H$ is an induced subgraph.
We may assume there exists a vertex $v$ of $G-V(H)$ adjacent in $G$ to at least two vertices in $V(H)$, for otherwise we are done.
Let $x,y$ be the branch vertices of $H$.
Let $P_1,P_2,P_3$ be the three internally disjoint paths in $H$ from $x$ to $y$.

By the minimality of $\lvert V(H) \rvert$ and the girth condition of $G$, $v$ is not adjacent to any branch vertices of $H$.
Similarly, for each $i \in \{1,2,3\}$, $v$ is adjacent to at most one vertex in $V(P_i)$.
So there exist distinct $i,j$ such that $v$ is adjacent to exactly one vertex $a$ in $V(P_i)-\{x,y\}$ and exactly one vertex $b$ in $V(P_j)-\{x,y\}$.
By symmetry, we may assume $i=1$ and $j=2$.
Since $(H-(V(P_3)-\{x,y\}))+av+bv$ is a theta graph, by the minimality of $\lvert V(H) \rvert$, $\lvert V(P_3) \rvert \leq 3$.
Let $L_1$ be the subpath of $P_1$ from $x$ to $a$.
Since the graph obtained from $H+av+bv$ by deleting all internal vertices of $L_1$ is a theta graph, $L_1$ contains at most one internal vertex by the minimality of $\lvert V(H) \rvert$.
Similarly, the subpath $L_2$ of $P_2$ from $x$ to $b$ contains at most one internal vertex.
Since $L_1 \cup L_2 \cup avb$ is a cycle in $G$ and $G$ has girth at least six, both $L_1,L_2$ contain exactly one internal vertex.
Similarly, each of the subpath of $P_1$ from $a$ to $y$ and the subpath of $P_2$ from $b$ to $y$ contains exactly one vertex.
Since $P_1 \cup P_3$ is a cycle in $G$, $P_3$ contains exactly one internal vertex.
Hence $G[V(H) \cup \{v\}]$ contains a 1-subdivision of $K_4$ as a spanning subgraph.
Since $G$ is of girth at least six, $G[V(H) \cup \{v\}]$ is isomorphic to a 1-subdivision of $K_4$.

If there exists a vertex $v'$ of $V(G)-V(H)$ other than $v$ adjacent in $G$ to at least two vertices in $V(H)$, then $v'$ is adjacent in $G$ to at least two vertices in $V(H) \cup \{v\}$ which induces a subgraph isomorphic to a 1-subdivision of $K_4$, so $G$ contains a cycle of length ten by Lemma \ref{1subdivK4}, a contradiction.
So $v$ is the only vertex that is adjacent in $G$ to at least two vertices in $V(H)$.
This proves the lemma.
\end{proof}

\begin{lem} \label{2nbK4}
Let $d \geq 5$ be an integer.
Let $G$ be a 5-connected graph of girth at least six and of minimum degree at least $d$ that does not contain a cycle of length ten.
Let $H$ be an induced subgraph of $G$ isomorphic to a 1-subdivision of $K_4$.
Then $G$ contains $d$ admissible cycles.
\end{lem}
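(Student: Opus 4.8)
The plan is to use $H$ together with Theorem~\ref{mainthm} as the engine. Since $G$ has girth at least six (hence no $5$-cycle) and contains no cycle of length ten, Lemma~\ref{1subdivK4} implies that no vertex of $V(G)-V(H)$ is adjacent in $G$ to two vertices of $V(H)$; thus each vertex of $H$ is the unique $V(H)$-neighbour of each of its neighbours outside $H$. Write the underlying $K_4$ of $H$ on its branch vertices $\{x,y,c,e\}$, and for each edge $pq$ of this $K_4$ let $m_{pq}$ be the subdivision vertex of $H$ between $p$ and $q$. Fix $x$ and $y$ as roots. Inside $H$ one has the three $x$--$y$ paths $R_1=x\,m_{xy}\,y$, $R_2=x\,m_{xc}\,c\,m_{yc}\,y$ and $R_3=x\,m_{xc}\,c\,m_{ce}\,e\,m_{ye}\,y$ of lengths $2,4,6$ (these use all of $V(H)$ except $m_{xe}$). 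Put $G^{*}:=G-(V(H)\setminus\{x,y\})$; by the remark above every vertex of $V(G^{*})\setminus\{x,y\}$ lies in $V(G)\setminus V(H)$ and loses at most one neighbour in passing from $G$ to $G^{*}$, so $(G^{*},x,y)$ has minimum degree at least $d-1$.

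\textbf{The easy case.} If $(G^{*},x,y)$ is a $2$-connected rooted graph, Theorem~\ref{mainthm} gives $d-2$ admissible $x$--$y$ paths $P_1,\dots,P_{d-2}$ in $G^{*}$; each is internally disjoint from $V(H)$, so $P_i\cup R_j$ is a cycle of length $|P_i|+|R_j|$ for all $i,j$. Since $\{|R_j|:j\}=\{2,4,6\}$ is an arithmetic progression of common difference two and length three while the $|P_i|$ form an arithmetic progression of length $d-2$ and common difference one or two, the Minkowski-sum fact underlying Lemma~\ref{lemconcatenat} shows that $\{|P_i|+|R_j|\}$ contains an arithmetic progression of common difference one or two of length at least $d$, i.e.\ $d$ admissible cycles, and we are done.

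\textbf{The hard case.} Now suppose $(G^{*},x,y)$ is not $2$-connected; this is where the real work lies. I would pass to the block structure of $G^{*}+xy$: as $x,y$ lie in a common block, there is an end-block $B$ with cut-vertex $b$ such that $B-b$ is separated from $\{x,y\}$. One checks (the cases $b\in\{x,y\}$ and "$G^{*}+xy$ disconnected" being entirely analogous) that $V(B-b)\subseteq V(G)\setminus V(H)$ and $N_G(V(B-b))\subseteq V(B)\cup(V(H)\setminus\{x,y\})$, so the $5$-connectivity of $G$ forces $V(B-b)$ to be adjacent, through at least four distinct vertices of $B-b$, to at least four vertices of $V(H)\setminus\{x,y\}$. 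Every vertex of $B$ other than $b$ then has degree at least $d-1$ in $B$, so after choosing two such attachment vertices $v_1\ne v_2$ of $H$ and corresponding distinct $u_1,u_2\in V(B-b)$ with $u_iv_i\in E(G)$, Lemma~\ref{three vertices} (with exceptional vertex $b$) yields $d-3$ admissible $u_1$--$u_2$ paths in $B$, all disjoint from $V(H)$. Concatenating each of these with the edges $v_1u_1$, $u_2v_2$ and with an $H$-path from $v_1$ to $v_2$ produces cycles; if $v_1,v_2$ can be chosen so that $H$ carries $v_1$--$v_2$ paths of lengths $2,4,6,8$, then Lemma~\ref{lemconcatenat}'s sumset estimate again delivers at least $d$ admissible cycles.

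\textbf{The main obstacle.} The delicate point is precisely this last step: I must verify that whichever set of four-or-more vertices of $V(H)\setminus\{x,y\}$ happens to be adjacent to $B-b$, it contains a pair $v_1,v_2$ between which $H$ has four path lengths in arithmetic progression of common difference two (or otherwise enough lengths to push the count up to $d$). This is a finite check, organised by the automorphism group of the subdivided $K_4$ fixing $\{x,y\}$: most attachment patterns (e.g.\ any containing $\{m_{xc},m_{yc}\}$, $\{m_{xc},m_{xe}\}$, $\{m_{xy},m_{xc}\}$, $\{m_{xc},m_{ce}\}$ and their images) do contain such a good pair, but a few degenerate ones — notably $\{c,e,m_{ce},m_{xy}\}$ — do not, and for those the plan is either to re-select the root pair among the four branch vertices of $H$ (and iterate, using a suitable minimality to guarantee termination) or to exhibit a cycle of length ten inside $B\cup H$ directly, contradicting the hypothesis. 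Making this degenerate-pattern analysis airtight, rather than the routing itself, is what I expect to be the true difficulty.
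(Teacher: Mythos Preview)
Your easy case is correct and close in spirit to the paper's argument for a $2$-connected component of $G-V(H)$. The hard case, however, has a real gap that you yourself identify but do not close. The attachment pattern $\{c,e,m_{ce},m_{xy}\}$ is not ruled out by anything in your argument: $5$-connectivity only forces $\lvert N_G(V(B-b))\rvert\geq 5$, and $\{b\}\cup\{c,e,m_{ce},m_{xy}\}$ meets that bound exactly. No pair from this set carries four $H$-paths whose lengths form an arithmetic progression of difference two (the best is three, e.g.\ $(2,4,6)$ for $(c,e)$ or $(3,5,7)$ for $(c,m_{xy})$), and three $H$-paths combined with the $d-3$ admissible $B$-paths from Lemma~\ref{three vertices} yield only $(d-3)+3-1=d-1$ admissible cycles. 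At $d=5$ this is already fatal; the pair $(c,m_{ce})$ with lengths $(1,5,7)$ rescues some sub-cases but not, for instance, the one where the two $B$-paths have common difference one. Neither proposed fix is convincing: re-rooting to another branch pair relaunches the same construction on a new $G^{*}$ with no monovariant to force termination, and nothing guarantees a $10$-cycle inside $B\cup H$.

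The paper avoids this entire case analysis by working with $G-V(H)$ (deleting all ten vertices) rather than keeping two roots, and when a component $M$ of $G-V(H)$ is not $2$-connected it exploits \emph{two} end-blocks $B_1,B_2$ instead of one. Each $(B_i,x_i,b_i)$ is a $2$-connected rooted graph of minimum degree at least $d-1$, so Theorem~\ref{mainthm} gives $d-2$ admissible paths in each; threading these through a single path in $M$ from $b_1$ to $b_2$ and a single arbitrary path in $H$ between the two attachment points already produces $2(d-2)-1=2d-5\geq d$ admissible cycles, with no constraint whatsoever on which vertices of $H$ get hit. That two-end-block trick is the idea you are missing; once you adopt it, the degenerate-pattern analysis simply evaporates.
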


\begin{proof}
Suppose to the contrary that $G$ does not contain $d$ admissible cycles.
Note that every vertex of $G-V(H)$ is adjacent in $G$ to at most one vertex in $V(H)$ by Lemma \ref{1subdivK4}.
We say a pair of two distinct vertices $x,y$ of $H$ are {\it useful} if there exist paths $H_1,H_2,H_3$ in $H$ from $x$ to $y$ of lengths $h_1,h_2,h_3$, respectively, such that $(h_1,h_2,h_3) \in \{(1,5,7), (2,4,6), (3,5,7)\}$.

Let $M$ be a component of $G-V(H)$.
Since $G$ is 5-connected, there exists a useful pair of vertices $x,y$ such that $x$ is adjacent in $G$ to some vertex $x'$ in $V(M)$ and $y$ is adjacent in $G$ to some vertex $y'$ in $V(M)$.
Note that $x' \neq y'$, for otherwise some vertex of $M$ is adjacent in $G$ to two vertices in $V(H)$.

Suppose $M$ is $2$-connected.
Then $(M,x',y')$ is a $2$-connected rooted graph of minimum degree at least $d-1$.
By Theorem \ref{mainthm}, there exist $d-2$ admissible paths $P_1,...,P_{d-2}$ in $G'$ from $x'$ to $y'$.
Since $d \geq 5$, the set $\{(P_i \cup H_j)+xx'+yy': 1 \leq i \leq d-2, 1 \leq j \leq 3\}$ contains $d$ admissible cycles, a contradiction.

So $M$ is not $2$-connected.
Let $B_1,B_2$ be two distinct end-blocks of $M$.
Let $b_1,b_2$ be the cut-vertex of $G-V(H)$ contained in $V(B_1),V(B_2)$, respectively.
Since $G$ is $3$-connected, some vertex $x_1$ in $V(B_1)-\{b_1\}$ is adjacent in $G$ to some vertex $u_1$ in $V(H)$, and some vertex $x_2$ in $V(B_2)-\{b_2\}$ is adjacent in $G$ to some vertex $u_2$ in $V(H)$.
For each $i \in \{1,2\}$, since $(B_i, x_i,b_i)$ is a $2$-connected rooted graph of minimum degree at least $d-1$, there exist $d-2$ admissible paths $Q_{i,1},...,Q_{i,d-2}$ in $B_i$ from $x_i$ to $b_i$.
Let $Q$ be a path in $M$ from $b_1$ to $b_2$ internally disjoint from $V(B_1) \cup V(B_2)$, and let $Q'$ be a path in $H$ from $u_1$ to $u_2$.
Then the set $\{(Q_{1,i} \cup Q \cup Q_{2,j} \cup Q')+x_1u_1+x_2u_2: 1 \leq i \leq d-2, 1 \leq j \leq d-2\}$ contains $2(d-2)-1=2d-5 \geq d$ admissible cycles, a contradiction.
This proves the lemma.
\end{proof}

\begin{lem} \label{interface}
Let $H$ be a theta graph.
Then there exist two distinct vertices $x,y$ and three paths in $H$ from $x$ to $y$ such that the lengths of these three paths modulo 5 are pairwise distinct.
\end{lem}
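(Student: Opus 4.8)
The plan is to exhibit the pair $x,y$ explicitly after disposing of an easy case. Write $u,v$ for the two branch vertices of the theta graph $H$ and $P_1,P_2,P_3$ for the three internally disjoint paths from $u$ to $v$, of lengths $a_1\le a_2\le a_3$; since $H$ is simple, at most one of the $a_i$ equals $1$. If $a_1,a_2,a_3$ are pairwise incongruent modulo $5$, we are done immediately with $x=u$, $y=v$ and the three paths $P_1,P_2,P_3$. So from now on assume at least two of the $a_i$ are congruent modulo $5$.

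The idea is to push one or both of $x,y$ into the interior of a path, which produces extra paths from $x$ to $y$ whose lengths we can tune, using throughout that $2$ is invertible modulo $5$. \emph{Construction A:} fix a path $P_i$ of length $\ge 2$, let $x$ be its interior vertex at distance $r$ from $u$, and take $y=v$. Then the three paths from $x$ to $v$ have lengths $a_i-r$, $r+a_j$, $r+a_k$, where $a_j,a_k$ are the lengths of the other two paths; these are pairwise distinct modulo $5$ provided $a_j\not\equiv a_k$ and $r\not\equiv 3(a_i-a_j)$, $r\not\equiv 3(a_i-a_k)$ modulo $5$. I would apply this when exactly two of the $a_i$ are congruent modulo $5$: one of the two congruent paths has length $\ge 2$ (as at most one path has length $1$), and choosing it as $P_i$ makes its two companions incongruent.

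When all three $a_i$ are congruent modulo $5$, Construction A always collides and I would switch to \emph{Construction B:} put $x$ in the interior of a path $P_i$ at distance $r$ from $u$ and $y$ in the interior of another path $P_j$ at distance $t$ from $u$ (two paths of length $\ge 2$ always exist), with $m$ the length of the third path. Following the four natural routes from $x$ to $y$ (through $u$; through $v$; through $u$, then around the third path to $v$, then along $P_j$; and its mirror through $v$ first) gives four paths of lengths
\[
\ell_1=r+t,\quad \ell_2=(a_i-r)+(a_j-t),\quad \ell_3=r+m+(a_j-t),\quad \ell_4=(a_i-r)+m+t.
\]
Since $\ell_4\equiv\ell_1$, $\ell_3\equiv\ell_1$, and $\ell_3\equiv\ell_4$ pin $r$, $t$, and $r-t$ (respectively) to single residue classes modulo $5$, picking $r$ outside one class and then $t$ outside two classes makes $\ell_1,\ell_3,\ell_4$ pairwise distinct; when both subdivided paths have length $\ge 6$ this is automatic, and otherwise one re-chooses $r,t$ in the short intervals $\{1,\dots,a_i-1\}$, $\{1,\dots,a_j-1\}$, passes to another of the four triples, or permutes the roles of $P_1,P_2,P_3$.

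I expect the real work to be the resulting finite case analysis: for theta graphs whose path lengths are all small, the intervals available for $r$ and $t$ can collapse to a single value, and one must check directly that some combination of construction, roles, and parameters still succeeds. The one configuration to be careful with is $H\cong K_{2,3}$ (all path lengths equal to $2$), where the conclusion genuinely fails; in the applications here $H$ is a minimum theta subgraph of a graph of girth at least six (cf.\ Lemma~\ref{mintheta}), so $a_1+a_2\ge 6$ and this case does not arise --- which I would note explicitly.
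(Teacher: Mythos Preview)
Your two constructions are exactly the paper's, and the split into ``exactly two path-lengths congruent'' versus ``all three congruent'' is the same. The difference is that the paper does not parametrise and then search for good $r,t$: it fixes the parameters outright and argues by contradiction, so the finite case analysis you anticipate never materialises. When $a_1\equiv a_2\equiv t$ and $a_3\equiv s\ne t$ (relabelled so that $|E(P_1)|\ge 3$), it takes $y$ to be the far branch vertex and $x$ at distance $r\in\{1,2\}$ on $P_1$; the three path-lengths $t-r$, $t+r$, $s+r$ have pairwise differences $2r$, $t-s$, $t-s-2r$, so the only possible coincidence is $t-s\equiv 2r$, and this cannot hold for both $r=1$ and $r=2$. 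When all three lengths are $\equiv t$, it places $x$ at distance $1$ on one path and $y$ at distance $2$ on another and uses only your $\ell_2,\ell_3,\ell_4$: these have residues $2t-3$, $2t-1$, $2t+1$, automatically pairwise distinct modulo~$5$, with no dependence on the third path and no need to permute roles. Using $\ell_2,\ell_3,\ell_4$ rather than your $\ell_1,\ell_3,\ell_4$ is what makes the constraints so clean --- their pairwise differences are $2r$, $2t$, $2(r-t)$, so any $r\ne t$ with $r,t\not\equiv 0$ works.

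Your observation that $K_{2,3}$ breaks the lemma as stated is correct, and the paper's proof does not address it: both cases tacitly require, after relabelling, a path of length $\ge 3$ to carry the vertex at distance $2$, and this is impossible precisely when $a_1=a_2=a_3=2$. The lemma is invoked only inside the proof of Lemma~\ref{dean5}, where $H$ sits in a graph of girth at least six; then $a_i+a_j\ge 6$ for all $i\ne j$, so at most one of the $a_i$ is $\le 2$ and the required relabellings always exist. Your instinct to flag this hypothesis explicitly is well placed.
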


\begin{proof}
Let $P_1,P_2,P_3$ be the three internally disjoint paths in $H$ between the branch vertices of $H$.
For each $i \in \{1,2,3\}$, we denote $P_i$ by $v_{i,0}v_{i,1}...v_{i,\lvert E(P_i) \rvert}$, where $v_{1,0}=v_{2,0}=v_{3,0}$.
For each $i \in \{1,2,3\}$ and each $j \in \{1,...,\lvert E(P_i) \rvert\}$, let $L_{i,j}=v_{i,0}v_{i,1}...v_{i,j}$ and let $R_{i,j}=v_{i,j}v_{i,j+1}...v_{i,\lvert E(P_i) \rvert}$.

Suppose to the contrary that there do not exist two distinct vertices $x,y$ and three paths from $x$ to $y$ with pairwise distinct lengths modulo 5.
So the lengths of $P_1,P_2,P_3$ modulo 5 are not pairwise distinct.
Hence, by symmetry, there exists $t \in \{0,1,2,3,4\}$ such that $\lvert E(P_1) \rvert$ and $\lvert E(P_2) \rvert$ equal $t$ modulo 5.

Suppose that $\lvert E(P_3) \rvert=t$ modulo 5.
By symmetry, we may assume that $\lvert E(P_3) \rvert \leq \lvert E(P_i) \rvert$ for every $i \in \{1,2\}$.
So $\min\{\lvert E(P_1) \rvert,\lvert E(P_2) \rvert\} \geq 2$.
Note that the paths $R_{1,1} \cup R_{2,2}, L_{1,1} \cup P_3 \cup R_{2,2}, R_{1,1} \cup P_3 \cup L_{2,2}$ are three paths from $v_{1,1}$ to $v_{2,2}$ with lengths $2t-3,2t-1,2t+1$ modulo 5, respectively, a contradiction.

Hence there exists $s \in \{0,1,2,3,4\}-\{t\}$ such that $\lvert E(P_3) \rvert =s$ modulo 5.
By symmetry, we may assume that $\lvert E(P_1) \rvert >1$.
For every $r \in \{1,2\}$, the paths $R_{1,r}, L_{1,r} \cup P_2, L_{1,r} \cup P_3$ are three paths from $v_{1,r}$ to $v_{1,\lvert E(P_1) \rvert}$ of lengths $t-r,t+r,s+r$ modulo 5, respectively, so $t-r=s+r$ modulo 5.
That is, $t-1=s+1$ modulo 5 and $t-2=s+2$ modulo 5, a contradiction.
This proves the lemma.
\end{proof}

\begin{lem} \label{3plus3mod5}
Let $a$ and $d$ be integers such that $d \in \{1,2\}$.
Let $B$ be a subset of $\{0,1,2,3,4\}$ of size three.
Then the set $\{a+id+b: 0 \leq i \leq 2, b \in B\}$ contains a multiple of 5.
\end{lem}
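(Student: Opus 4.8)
The plan is to reduce everything to a statement about sumsets in the cyclic group $\mathbb{Z}_5$. First I would note that as $i$ ranges over $\{0,1,2\}$, the residues $id \bmod 5$ are exactly $0, d, 2d$, and since $d \in \{1,2\}$ is coprime to $5$ these three residues are pairwise distinct; write $I=\{0,d,2d\}\subseteq\mathbb{Z}_5$, so $|I|=3$. Similarly regard $B$ as a subset of $\mathbb{Z}_5$ of size $3$ (distinct elements of $\{0,1,2,3,4\}$ remain distinct modulo $5$). The set in question is $a+I+B$ read modulo $5$, so it suffices to prove that $I+B=\mathbb{Z}_5$; then in particular the residue $-a \bmod 5$ lies in $I+B$, which yields some $i\in\{0,d,2d\}$ and $b\in B$ with $a+i+b\equiv 0\pmod 5$. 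Writing $i=jd$ for the appropriate $j\in\{0,1,2\}$, this is exactly a multiple of $5$ in the given set.

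The one real step is the identity $I+B=\mathbb{Z}_5$ whenever $|I|=|B|=3$, which I would prove by a direct pigeonhole argument rather than by invoking the Cauchy--Davenport theorem (although $|I|+|B|-1=5$ makes that applicable too). Fix $c\in\mathbb{Z}_5$. The translate $c-I=\{c-i:i\in I\}$ has exactly $3$ elements, whereas the complement $\mathbb{Z}_5\setminus B$ has only $5-3=2$ elements, so $c-I$ cannot be contained in $\mathbb{Z}_5\setminus B$. Hence $(c-I)\cap B\neq\emptyset$: choosing $b$ in this intersection and $i\in I$ with $c-i=b$ gives $c=i+b\in I+B$. As $c\in\mathbb{Z}_5$ was arbitrary, $I+B=\mathbb{Z}_5$, completing the argument.

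There is essentially no obstacle here; the only point requiring care is that $0,d,2d$ must be genuinely distinct modulo $5$, which is precisely where the hypothesis $d\in\{1,2\}$ (rather than an arbitrary common difference) is used — it guarantees $|I|=3$ and hence $|I|+|B|-1\ge 5$, so that the sumset fills all of $\mathbb{Z}_5$ regardless of what the particular $3$-element set $B$ is.
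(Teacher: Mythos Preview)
Your proof is correct and in fact cleaner than the paper's. The paper argues by a short case analysis: it first observes that if $B$ is an arithmetic progression modulo $5$ (with common difference $1$ or $2$) then $\{a+id:0\le i\le 2\}+B$ covers five consecutive residues; then, after a shift, it reduces the remaining possibilities for $B$ to $\{0,1,3\}$ and checks $d=1$ and $d=2$ by hand, in each case exhibiting $\{a,a+1,a+2,a+3,a+4\}$ inside $X$.

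Your approach instead proves the single sumset identity $I+B=\mathbb{Z}_5$ for any two $3$-element subsets $I,B\subseteq\mathbb{Z}_5$, via the pigeonhole observation that $|c-I|=3>2=|\mathbb{Z}_5\setminus B|$ forces $(c-I)\cap B\neq\emptyset$. This is essentially the Cauchy--Davenport bound $|I+B|\ge\min(5,|I|+|B|-1)=5$ proved from scratch, and it avoids any case distinction on the shape of $B$ or the value of $d$. The hypothesis $d\in\{1,2\}$ is used exactly where you say, to guarantee $|I|=3$; the rest is uniform. The paper's argument is more concrete but less transparent about why the lemma holds; yours explains it structurally and would generalize immediately (e.g., to any prime $p$ with $|I|+|B|\ge p+1$).
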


\begin{proof}
Let $X=\{a+id+b: 0 \leq i \leq 2, b \in B\}$.
If there exists an integer $s$ such that the three elements of $B$ are either $s,s+1,s+2$ modulo 5 or $s,s+2,s+4$ modulo 5, then $X$ contains a multiple of 5.
So by shifting, we may without loss of generality assume that $B=\{0,1,3\}$.
If $d=1$, then $X \supseteq \{a,a+1,a+2,a+3,a+4\}$, so $X$ contains a multiple of 5.
If $d=2$, then $X \supseteq \{a,a+1,a+2,a+3,a+4\}$, so $X$ contains a multiple of 5.
This proves the lemma.
\end{proof}

\begin{lem}\label{dean5}
If $G$ is a $5$-connected graph of girth at least five, then $G$ contains a cycle of length $0$ modulo $5$.
\end{lem}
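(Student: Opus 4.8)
The plan is a case analysis driven by the girth of $G$ and by a minimum theta subgraph, with each case funnelled into a lemma already proved. First the easy reductions: if $G$ has girth $5$ it contains a $5$-cycle, and if $G$ contains a cycle of length $10$ that cycle has length $0$ modulo $5$; so assume $G$ has girth at least $6$ and no cycle of length $10$. Since $G$ is $5$-connected it is $3$-connected, so it contains a subdivision of $K_4$ and hence a theta subgraph, and since a theta graph has maximum degree $3$ while $\delta(G)\ge 5$ no such subgraph spans $V(G)$; choose a theta subgraph $H$ with $|V(H)|$ minimum. By Lemma~\ref{mintheta}, $H$ is induced, at most one vertex of $G-V(H)$ is adjacent to two or more vertices of $H$, and if such a vertex $v$ exists then $G[V(H)\cup\{v\}]$ is isomorphic to a $1$-subdivision of $K_4$. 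In that case $G[V(H)\cup\{v\}]$ is an induced $1$-subdivision of $K_4$, so Lemma~\ref{2nbK4}, applied with $d=5$ and with $G[V(H)\cup\{v\}]$ as the prescribed induced $1$-subdivision of $K_4$, yields five admissible cycles in $G$; the lengths of five admissible cycles form an arithmetic progression of length $5$ with common difference $1$ or $2$, and any such progression is a complete residue system modulo $5$ (both $\{a,a+1,\dots,a+4\}$ and $\{a,a+2,a+4,a+6,a+8\}$ meet every class), so one of these cycles has length $0$ modulo $5$.

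Hence we may assume every vertex of $G-V(H)$ has at most one neighbour in $V(H)$; this is the heart of the matter. By Lemma~\ref{interface} there are distinct $x,y\in V(H)$ and three $x$-$y$ paths $H_1,H_2,H_3$ in $H$ whose lengths are pairwise distinct modulo $5$; write $B$ for the resulting size-$3$ set of residues. The strategy is to find three admissible paths $P_1,P_2,P_3$ from $x$ to $y$ in $G$ that avoid $V(H)\setminus\{x,y\}$: then each $P_i\cup H_j$ is a cycle (its two arcs meet only in $x$ and $y$), and since $|P_1|,|P_2|,|P_3|$ form a three-term arithmetic progression with common difference $d\in\{1,2\}$, the lengths of these nine cycles, taken modulo $5$, are exactly $\{a+id+b:0\le i\le 2,\ b\in B\}$ for a suitable integer $a$; by Lemma~\ref{3plus3mod5} this set contains a multiple of $5$, which gives the desired cycle.

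To build $P_1,P_2,P_3$, work in $\Gamma:=G-(V(H)\setminus\{x,y\})$. Every vertex of $\Gamma$ other than $x,y$ lies outside $H$, hence has at most one neighbour in $V(H)$ and loses at most one neighbour in passing from $G$ to $\Gamma$, so $(\Gamma,x,y)$ has minimum degree at least $4$; once $(\Gamma,x,y)$ is a $2$-connected rooted graph, Theorem~\ref{mainthm} with $k=3$ supplies the three admissible paths. (Since no vertex outside $H$ is adjacent to both $x$ and $y$, a neighbour of $x$ in $G-V(H)$ is automatically distinct from a neighbour of $y$ there, which is convenient when splicing paths through $2$-connected pieces of $G-V(H)$.) When $G-V(H)$ is $2$-connected this settles the case; otherwise one routes through the block structure of $G-V(H)$, applying Theorem~\ref{mainthm} inside end-blocks and Lemma~\ref{three vertices} to glue, exactly in the style of the proofs of Lemmas~\ref{2nbK4} and~\ref{deanbi}.

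The main obstacle is this routing together with a point glossed over above: Lemma~\ref{interface} produces one particular pair $\{x,y\}$, whereas the routing needs $x$ and $y$ to attach to a common $2$-connected portion of $G-V(H)$ — a component of $G-V(H)$ adjacent to both of them, or two blocks joined by a path with one adjacent to $x$ and the other to $y$. Five-connectivity forces every component of $G-V(H)$ to be adjacent to at least five vertices of $H$, but five vertices of a theta graph need not contain any pair joined by three paths with pairwise distinct residues modulo $5$ (for instance five consecutive internal vertices of a long side of $H$). So the real work is to choose $H$, and then $x,y$, in a way compatible with how $G-V(H)$ hangs off $H$; I expect this to demand a further layer of case analysis on the block structure of $G-V(H)$ and on which arcs of $H$ absorb the attachments, and this is exactly where the difficulty peculiar to $k=5$ is concentrated.
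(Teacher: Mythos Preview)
Your outline is correct up to and including the reduction to the case where every vertex of $G-V(H)$ has at most one neighbour in $V(H)$, and your treatment of the final case where $G-V(H)$ is $2$-connected is essentially the paper's argument. The gap is exactly the one you flag yourself: you insist on using the single pair $\{x,y\}$ supplied by Lemma~\ref{interface} in \emph{all} cases, and then correctly observe that when $G-V(H)$ is disconnected or not $2$-connected there is no reason this particular $x$ and $y$ should attach to a common $2$-connected piece of $G-V(H)$. You leave this unresolved (``I expect this to demand a further layer of case analysis''), so the proof is incomplete.

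The paper sidesteps this obstacle entirely by \emph{not} invoking Lemma~\ref{interface} in the bad cases. If $G':=G-V(H)$ has a component $M$ that is not $2$-connected, take two end-blocks $B_1,B_2$ of $M$ with cut-vertices $b_1,b_2$; each has a non-cut vertex $x_i$ with a neighbour $y_i$ in $V(H)$. Since $(B_i,x_i,b_i)$ has minimum degree at least $d-1=4$, Theorem~\ref{mainthm} gives $d-2=3$ admissible $x_i$--$b_i$ paths in $B_i$. Splice these through a $b_1$--$b_2$ path in $M$ and any $y_1$--$y_2$ path in $H$: the resulting cycle lengths contain $2(d-2)-1=5$ admissible values, hence a multiple of $5$. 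If every component of $G'$ is $2$-connected but $G'$ is disconnected, do the same with two components $M_1,M_2$ in place of $B_1,B_2$, using two disjoint paths in $H$ (which is $2$-connected) to close up. Only after both of these cases are disposed of is $G'$ known to be $2$-connected, and \emph{then} Lemma~\ref{interface} is applied, with $x',y'\in V(G')$ chosen adjacent to $x,y$ respectively; Theorem~\ref{mainthm} in $(G',x',y')$ gives three admissible paths, and Lemma~\ref{3plus3mod5} finishes. The point is that the ``residue'' information from Lemma~\ref{interface} is only needed when a single $2$-connected piece must do all the work; when the block structure is richer, two independent applications of Theorem~\ref{mainthm} already produce five admissible cycles with no residue bookkeeping at all.
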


\begin{proof} 
Suppose to the contrary that $G$ does not contain a cycle of length 0 modulo 5.
In particular, $G$ does not contain a 5-cycle and a 10-cycle, and $G$ does not contain five admissible cycles.
So the girth of $G$ is at least six, and $G$ does not contain a cycle of length ten.

Let $H$ be a subgraph of $G$ isomorphic to a theta graph with $\lvert V(H) \rvert$ minimum.
By Lemma \ref{mintheta}, $H$ satisfies the following.
	\begin{itemize}
		\item $H$ is an induced subgraph of $G$.
		\item There exists at most one vertex of $G-V(H)$ adjacent in $G$ to at least two vertices in $V(H)$.
		\item If there exists a vertex $v$ of $G-V(H)$ adjacent in $G$ to at least two vertices in $V(H)$, then $G[V(H) \cup \{v\}]$ is isomorphic to a 1-subdivision of $K_4$.
	\end{itemize}

If there exists a vertex of $G-V(H)$ adjacent in $G$ to at least two vertices of $V(H)$, then there exists an induced subgraph $H'$ isomorphic to an induced 1-subdivision of $K_4$, so by Lemma \ref{2nbK4}, $G$ contains five admissible cycles, a contradiction.

So every vertex of $G-V(H)$ is adjacent in $G$ to at most one vertex in $V(H)$.
Let $G'=G-V(H)$.
Let $d=5$.

Suppose that there exists a component $M$ of $G'$ such that $M$ is not $2$-connected.
Let $B_1,B_2$ be distinct end-blocks of $M$.
Since $G$ is $3$-connected and every vertex in $V(M)$ is adjacent in $G$ to at most one vertex in $V(H)$, $B_1$ and $B_2$ are $2$-connected.
For each $i \in \{1,2\}$, let $b_i$ be the cut-vertex of $M$ contained in $V(B_i)$.
Since $G$ is $3$-connected, for each $i \in \{1,2\}$, there exists $x_i \in V(B_i)-\{b_i\}$ such that $x_i$ is adjacent in $G$ to some vertex $y_i$ in $V(H)$.
For each $i \in \{1,2\}$, since $(B_i,x_i,b_i)$ is a $2$-connected rooted graph of minimum degree at least $d-1$, there exist $d-2$ admissible paths $P_{i,1},...,P_{i,d-2}$ in $B_i$ from $x_i$ to $b_i$ by Theorem \ref{mainthm}.
Let $Q$ be a path in $M$ from $b_1$ to $b_2$ internally disjoint from $V(B_1) \cup V(B_2)$.
Let $Q'$ be a path in $H$ from $y_1$ to $y_2$.
Then the set $\{(P_{1,i} \cup Q \cup P_{2,j} \cup Q')+x_1y_1+x_2y_2: 1 \leq i \leq d-2, 1 \leq j \leq d-2\}$ contains $2(d-2)-1 \geq d=5$ admissible paths, a contradiction.

So every component of $G'$ is $2$-connected.
Suppose that $G'$ is not connected.
Let $M_1,M_2$ be two distinct components of $G'$.
For each $i \in \{1,2\}$, since $G$ is 4-connected, there exist distinct vertices $x_{i,1}$ and $x_{i,2}$ in $V(M_i)$ such that $x_{i,1}$ is adjacent in $G$ to a vertex $y_{i,1}$ in $V(H)$ and $x_{i,2}$ is adjacent in $G$ to a vertex $y_{i,2}$ in $V(H)$.
For each $i \in \{1,2\}$, since $(M_i,x_{i,1},x_{i,2})$ is a $2$-connected rooted graph of minimum degree at least $d-1$, there exist $d-2$ admissible paths $R_{i,1},...,R_{i,d-2}$ in $M_i$ from $x_{i,1}$ to $x_{i,2}$ by Theorem \ref{mainthm}.
Since $H$ is $2$-connected, there exist two disjoint paths $Q_1,Q_2$ in $H$ from $\{y_{1,1},y_{1,2}\}$ to $\{y_{2,1},y_{2,2}\}$. 
Then the set $\{(R_{1,i} \cup Q_1 \cup R_{2,j} \cup Q_2)+x_{1,1}y_{1,1}+x_{1,2}y_{1,2}+x_{2,1}y_{2,1}+x_{2,2}y_{2,2}: 1 \leq i \leq d-2, 1 \leq j \leq d-2\}$ contains $2(d-2)-1 \geq 5$ admissible cycles, a contradiction.

So $G'$ is $2$-connected.
By Lemma \ref{interface}, there exist two distinct vertices $x,y$ in $H$ such that there exist three paths $A_1,A_2,A_3$ in $H$ from $x$ to $y$ with pairwise distinct lengths modulo 5.
Since $G$ is 5-connected and $H$ is an induced subgraph, there exist distinct vertices $x',y'$ in $V(G')$ such that $\{xx',yy'\} \subseteq E(G)$.
Since $(G',x',y')$ is a $2$-connected rooted graph of minimum degree at least $d-1=4$, by Theorem \ref{mainthm}, there exist three admissible paths $Z_1,Z_2,Z_3$ in $G'$ from $x'$ to $y'$.
By Lemma \ref{3plus3mod5}, the set $\{(Z_i \cup A_j)+ xx'+yy': 1 \leq i \leq 3, 1 \leq j \leq 3\}$ contains a cycle of length 0 modulo 5, a contradiction.
This proves the lemma.
\end{proof}

Now we are ready to prove Theorem \ref{deantheo}. The following is a restatement of Theorem \ref{deantheo}.

\begin{theo}\label{Thm:Dean2}
For $d\geq 3$, every $d$-connected graph contains a cycle of length zero modulo $d$.
\end{theo}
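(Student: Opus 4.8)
The plan is to argue by contradiction. Suppose $G$ is $d$-connected with $d\geq 3$ and $G$ contains no cycle of length $0$ modulo $d$. The cases $d=3$ and $d=4$ are the theorems of Chen and Saito \cite{CS} and of Dean, Lesniak and Saito \cite{DLS93}, so we may assume $d\geq 5$; then $G$ is $5$-connected and has minimum degree at least $d$. Our standing assumption has several consequences used throughout: $G$ contains no $d$ cycles of consecutive lengths (any $d$ consecutive integers include a multiple of $d$, and every cycle length is at least $3$), no $C_d$ and no $C_{2d}$, and, when $d=5$, no $C_5$ and no $C_{10}$. Two elementary observations will let us convert the lemmas of this section into contradictions. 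First, if the lengths of $d$ admissible cycles form an arithmetic progression of common difference $r\in\{1,2\}$ and in addition either $r=1$, or $d$ is odd, or all these lengths are even, then these lengths cover either all residues or all even residues modulo $d$, and hence one of them is $0$ modulo $d$; thus, in a bipartite graph (where every cycle is even), or whenever $d$ is odd, the mere existence of $d$ admissible cycles already produces the desired cycle. Second, $K'_{d+1}$ contains a cycle of every length in $\{3,4,\dots,d+1\}$, in particular a $C_d$, and $K_{d,d}^-$ contains a Hamilton cycle, i.e.\ a $C_{2d}$, whenever $d\geq 3$; both facts follow by exhibiting explicit cycles.

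First suppose $G$ contains a triangle. If $G$ also contains a $K_4^-$ subgraph, then either $K'_{d+1}\subseteq G$, which forces $C_d\subseteq G$ by the second observation, a contradiction; or $K'_{d+1}\not\subseteq G$, and applying Lemma \ref{usingK_4-} with $t=d+1$ (valid since $d+1\geq 5$ and $G$ is $(t-1)$-connected) produces $d$ cycles of consecutive lengths, a contradiction. If $G$ contains a triangle but no $K_4^-$ subgraph, then Lemma \ref{K_3noK_4-} yields $d$ cycles of consecutive lengths, a contradiction.

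Now suppose $G$ is triangle-free. If $G$ is non-bipartite and $d\geq 6$, then Lemma \ref{3connnb} gives either $d$ cycles of consecutive lengths (when $d\geq 8$) or cycles of all lengths modulo $d$ (when $d\in\{6,7\}$), a contradiction in either case. If $G$ is non-bipartite and $d=5$, we split according to the girth: if $G$ has girth at least five, Lemma \ref{dean5} produces a cycle of length $0$ modulo $5$, a contradiction; if $G$ has girth exactly four, then either $C_5\subseteq G$, an immediate contradiction, or $G$ has no $C_5$, and then either $K_{5,5}^-\subseteq G$ and hence $C_{10}\subseteq G$, a contradiction, or $G$ has girth exactly four, no $C_5$, no $K_{5,5}^-$, and is $5$-connected of minimum degree at least $5$, so Lemma \ref{deanc3} gives $5$ admissible cycles, one of which has length $0$ modulo $5$ by the first observation since $5$ is odd, a contradiction. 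Finally, suppose $G$ is triangle-free and bipartite, so every cycle of $G$ has even length. If $G$ has no $C_4$, then Lemma \ref{deanbi} gives $d$ admissible cycles and the first observation completes the argument. If $G$ has a $C_4$, then $G$ has girth exactly four and no $C_5$, so either $K_{d,d}^-\subseteq G$ and hence $C_{2d}\subseteq G$, a contradiction, or (as $d\geq 5$, so $\max\{d,5\}=d$) Lemma \ref{deanc3} gives $d$ admissible cycles, and the first observation again finishes. Every case yields a contradiction, so the theorem follows.

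The case analysis above is routine bookkeeping; the only genuinely delicate point is $d=5$. No analogue of Lemma \ref{3connnb} is available at $d=5$, which forces the split of the triangle-free non-bipartite case by the girth, and the high-girth subcase rests on Lemma \ref{dean5} and, behind it, on the theta-graph and $1$-subdivision-of-$K_4$ machinery of Lemmas \ref{1subdivK4}--\ref{3plus3mod5}; this is the hardest ingredient. A secondary subtlety one must keep track of is that several structural lemmas only guarantee $d$ \emph{admissible} cycles rather than $d$ cycles of consecutive lengths, so one genuinely relies on the fact that in a bipartite graph, or when $d$ is odd, this weaker conclusion still suffices to find a cycle of length $0$ modulo $d$.
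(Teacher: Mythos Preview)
Your proof is correct and follows essentially the same approach as the paper: reduce to $d\ge 5$ via \cite{CS,DLS93}, eliminate $K_4^-$ and then $K_3$ using Lemmas~\ref{usingK_4-} and~\ref{K_3noK_4-}, invoke Lemma~\ref{3connnb} for the non-bipartite triangle-free case with $d\ge 6$, and handle the remaining cases with Lemmas~\ref{deanc3}, \ref{deanbi}, and \ref{dean5}. The only cosmetic differences are that you apply Lemma~\ref{usingK_4-} with $t=d+1$ (using that $K'_{d+1}\supseteq C_d$) whereas the paper uses $t=d$ (using that $K'_d$ is Hamiltonian), and you organize the endgame as an explicit bipartite/non-bipartite and girth case split rather than the paper's more linear chain of deductions; both are equally valid.
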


\begin{proof}
By \cite[Theorem 1]{CS} and \cite[Theorem 1.2]{DLS93}, the theorem is true for $d \in \{3,4\}$.
So we may assume that $d \geq 5$.
Suppose to the contrary that $G$ does not contain a cycle of length 0 modulo $d$.
So $G$ does not contain a $K_d'$ subgraph and does not contain a $K_{d,d}^-$ subgraph.
In addition, $G$ does not contain $d$ cycles of consecutive length, and when $d$ is odd or $G$ is bipartite, $G$ does not contain $d$ admissible cycles.

Since $G$ is $(d-1)$-connected and does not contain a $K_d'$ subgraph, $G$ does not contain a $K_4^-$ subgraph by Lemma \ref{usingK_4-}.
Since $G$ does not contain a $K_4^-$ subgraph, by Lemma \ref{K_3noK_4-}, $G$ does not contain a $K_3$ subgraph.
Since $G$ does not contain a $K_3$ subgraph, by Lemma \ref{3connnb}, either $d=5$ or $G$ is bipartite.
Since $G$ does not contain a $K_3$ subgraph and a $K_{d,d}^-$ subgraph, by Lemma \ref{deanc3}, either $G$ does not contain a 4-cycle, or $G$ contains a cycle of length four and a cycle of length five, or $d$ is even.

Suppose that $G$ does not contain a 4-cycle.
Then $G$ is not bipartite by Lemma \ref{deanbi}.
So $d=5$.
Since $G$ does not contain a $K_3$ subgraph and a 4-cycle, $G$ is of girth at least five.
So $G$ contains a cycle of length 0 modulo $5=d$ by Lemma \ref{dean5}, a contradiction.

So either $G$ contains a 4-cycle and a 5-cycle, or $d$ is even.
Note that either case implies $d \neq 5$.
So $G$ is bipartite, contradicting that $G$ contains a 5-cycle.
This proves the theorem.
\end{proof}

When $d \geq 6$, we can strengthen the conclusion of Theorem \ref{Thm:Dean2}.

\begin{theo} \label{thm:Deanbigd}
For integers $d\geq 6$ and $t$ satisfying $2t\neq 2$ modulo $d$,
every $d$-connected graph contains a cycle of length $2t$ modulo $d$.
\end{theo}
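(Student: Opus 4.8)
The plan is to re-run the argument that proves Theorem~\ref{Thm:Dean2}, but to carry a stronger conclusion through every branch. Assume for contradiction that $G$ is $d$-connected with $d\geq 6$ and has no cycle of length $2t$ modulo $d$, where $2t\not\equiv 2\pmod d$. First I would record the configurations that are already fatal: $d$ cycles of consecutive lengths realize every residue modulo $d$; $d$ admissible cycles that happen to be all even (which is automatic in a bipartite graph) have common difference two and even first term, hence realize $2t$ modulo $d$ no matter the parity of $d$; ``cycles of all lengths modulo $d$'' trivially realizes $2t$; and, by a short direct inspection of cycle spectra, both $K_{d,d}^-$ (which has cycles of every even length in $\{4,6,\ldots,2d\}$) and $K_{d+1}'$ (which has cycles of every length in $\{3,4,\ldots,d+1\}$) contain a cycle of length $2t$ modulo $d$ whenever $2t\not\equiv 2\pmod d$.

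With those observations, the skeleton of the proof of Theorem~\ref{Thm:Dean2} carries over. To rule out a $K_4^-$ subgraph: if $G$ contains $K_4^-$ but not $K_{d+1}'$, apply Lemma~\ref{usingK_4-} with parameters $(d,d+1)$ in place of its $(d,t)$ (legitimate, since $G$ is $d$-connected and $5\leq d+1$) to obtain $d$ cycles of consecutive lengths, a contradiction; and if $G$ does contain $K_{d+1}'$, the inspection above is already a contradiction. Hence $G$ has no $K_4^-$, so by Lemma~\ref{K_3noK_4-} it has no $K_3$, i.e.\ $G$ is triangle-free. Since $d\geq 6$, if $G$ were non-bipartite then Lemma~\ref{3connnb} would yield $d$ cycles of consecutive lengths (when $d\geq 8$) or cycles of all lengths modulo $d$ (when $d\in\{6,7\}$), again a contradiction; so $G$ is bipartite. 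Now $G$ is bipartite, $d$-connected and triangle-free, and it contains no $K_{d,d}^-$ (else the inspection above applies). If $G$ has a cycle of length four, then it has girth exactly four and no cycle of length five, so Lemma~\ref{deanc3} supplies $d$ admissible cycles; if $G$ has no cycle of length four, Lemma~\ref{deanbi} supplies $d$ admissible cycles. In a bipartite graph these cycles are all even, so they realize $2t$ modulo $d$, completing the contradiction in every case.

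The one genuinely delicate point, and where I expect to spend the most care, is the bookkeeping around the residue class $2$. For Dean's conjecture it suffices to produce a single cycle of length $0$ modulo $d$, whereas here one must propagate the finer dichotomy ``every residue modulo $d$, or every residue modulo $d$ except $2$'' along each branch: the extremal graphs $K_{d+1}$ and $K_{d,n}$ (entering as the subgraphs $K_{d+1}'$ and $K_{d,d}^-$) really do omit the residue $2$, which is exactly why one must (a) invoke Lemma~\ref{usingK_4-} with $t=d+1$ rather than $t=d$ as in the proof of Theorem~\ref{Thm:Dean2}, and (b) fall back on explicit cycle-length computations inside $K_{d+1}'$ and $K_{d,d}^-$. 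A secondary subtlety is that in the bipartite endgame only ``$d$ admissible cycles'' is available (Lemmas~\ref{deanc3} and~\ref{deanbi}), not ``$d$ cycles of consecutive lengths'': this is harmless only because bipartiteness forces every length in the arithmetic progression to be even, which is precisely what makes those $d$ lengths cover all even residues modulo $d$. I also note that the case $2t\equiv 0\pmod d$ of the theorem re-proves Dean's conjecture for $d\geq 6$.
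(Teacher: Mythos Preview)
Your proposal is correct and follows essentially the same route as the paper's own proof: the same chain of lemmas (\ref{usingK_4-} with the lemma's $t$ set to $d+1$, then \ref{K_3noK_4-}, \ref{3connnb}, \ref{deanc3}, \ref{deanbi}) is invoked in the same order to reduce to the bipartite case and then obtain $d$ admissible (hence all-even) cycles. The only difference is expository: you spell out explicitly why $K_{d+1}'$ and $K_{d,d}^-$ already contain a cycle of length $2t$ modulo $d$ when $2t\not\equiv 2\pmod d$, and why $d$ admissible cycles in a bipartite graph hit every even residue, whereas the paper leaves these verifications implicit.
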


\begin{proof}
Suppose to the contrary that there exist integers $d \geq 6$ and $t$ with $2t\neq 2$ modulo $d$
such that there exists a $d$-connected graph $G$ that does not contain a cycle of length $2t$ modulo $d$.
In particular, $G$ does not contain a $K_{d+1}'$ subgraph and does not contain a $K_{d,d}^-$ subgraph.
In addition, $G$ does not contain $d$ cycles of consecutive length, and when $d$ is odd or $G$ is bipartite, $G$ does not contain $d$ admissible cycles.

By Lemma \ref{usingK_4-}, $G$ does not contain a $K_4^-$ subgraph.
By Lemma \ref{K_3noK_4-}, $G$ does not contain a $K_3$ subgraph.
By Lemma \ref{3connnb}, $G$ is bipartite.
Since $G$ is bipartite, by Lemma \ref{deanc3}, $G$ does not contain a 4-cycle.
By Lemma \ref{deanbi}, $G$ contains $d$ admissible cycles.
But $G$ is bipartite, a contradiction.
\end{proof}


\begin{thebibliography}{99}



\bibitem{B77}
B. Bollob\'as, \newblock{Cycles modulo k}, \newblock{\emph{Bull. London Math. Soc.}} \textbf{9} (1977), 97--98.


\bibitem{BV98}
J. A. Bondy and A. Vince, \newblock{Cycles in a graph whose lengths differ by one or two}, \newblock{\emph{J. Graph Theory}} \textbf{27} (1998), 11--15.



\bibitem{CS}
G. Chen and A. Saito, \newblock{Graphs with a cycle of length divisible by three},
\newblock{\emph{J. Combin. Theory Ser. B}} \textbf{60} (1994), 277--292.

\bibitem{CY19}
S. Chiba and T. Yamashita, \newblock{Minimum degree conditions for the existence of cycles of all lengths modulo $k$ in graphs},
\newblock{arXiv:1904.03818} [math.CO] 8 April 2019.

\bibitem{Dean}
N. Dean, \newblock{Which graphs are pancyclic modulo $k$?},
Sixth Internat. Conf. on the Theory of Applications of Graphs, Kalamazoo, Michigan (1988) 315--326.

\bibitem{DLS93}
N. Dean, L. Lesniak and A. Saito, \newblock{Cycles of length 0 modulo 4 in graphs},
\newblock{\emph{Discrete Math.}} \textbf{121} (1993), 37--49.



\bibitem{D10}
A. Diwan, \newblock{Cycles of even lengths modulo k}, \newblock{\emph{J. Graph Theory}} \textbf{65} (2010), 246--252.




\bibitem{Erd76}
P. Erd\H{o}s, \newblock{Some recent problems and results in graph theory, combinatorics, and number theory},
\newblock{\emph{Proc. Seventh S-E Conf. Combinatorics, Graph Theory and Computing, Utilitas Math.}}, Winnipeg, 1976, pp. 3--14.

\bibitem{Erd92}
P. Erd\H{o}s, \newblock{Some of my favourite problems in various branches of combinatorics}, \newblock{\emph{Matematiche (Catania)}} \textbf{47} (1992), 231--240.



\bibitem{Fan02}
G. Fan, \newblock{Distribution of cycle lengths in graphs}, \newblock{\emph{J. Combin. Theory Ser. B}} \textbf{84} (2002), 187--202.


\bibitem{Gyarfas}
A. Gy\'arf\'as, \newblock{Graphs with $k$ odd cycle lengths}, \newblock{\emph{Discrete Math.}} \textbf{103} (1992), 41--48.




\bibitem{HS98}
R. H\"aggkvist and A. Scott, \newblock{Arithmetic progressions of cycles},
\newblock{\emph{Technical Report}} No. 16 (1998), Matematiska Institutionen, Ume\r{a}Universitet.



\bibitem{KSV}
A. Kostochka, B. Sudakov and J. Verstra\"ete,
\newblock{Cycles in triangle-free graphs of large chromatic number}, \newblock{\emph{Combinatorica}} \textbf{37} (2017), 481--494.

\bibitem{KHT}
U. Krusenstjerna-Hafstr{\o}m and B.Toft,
\newblock{Special subdivisions of $K_4$ and 4-chromatic graphs},
\newblock{\emph{Monatsh. Math.}} \textbf{89} (1980), 101--110.

\bibitem{LM}
C. Liu and J. Ma, \newblock{Cycle lengths and minimum degree of graphs}, \newblock{\emph{J. Combin. Theory Ser. B}} \textbf{128} (2018), 66--95.

\bibitem{LM19}
Lyngsie and Merker, \newblock{Cycle lengths modulo $k$ in large $3$-connected cubic graphs},
\newblock{arXiv:1904.05076} [math.CO] 10 April 2019.




\bibitem{Ma}
J. Ma, \newblock{Cycles with consecutive odd lengths}, \newblock{\emph{European J. Combin.}} \textbf{52} (2016), 74--78.


\bibitem{MS04}
P. Mih\'ok and I. Schiermeyer, \newblock{Cycle lengths and chromatic number of graphs}, \newblock{\emph{Discrete Math.}} \textbf{286} (2004), 147--149.




\bibitem{SV08}
B. Sudakov and J. Verstra\"ete, \newblock{Cycle lengths in sparse graphs}, \newblock{\emph{Combinatorica}} \textbf{28} (2008), 357--372.

\bibitem{SV17}
B. Sudakov and J. Verstra\"ete,
\newblock{The extremal function for cycles of length $l$ mod $k$}, \newblock{\emph{Electron. J. Combin.}} \textbf{24(1)} (2017), \#P1.7



\bibitem{Th83}
C. Thomassen, \newblock{Graph decomposition with applications to subdivisions and
path systems modulo k}, \newblock{\emph{J. Graph Theory}} \textbf{7} (1983), 261--271.

\bibitem{Th88}
C. Thomassen, \newblock{Paths, circuits and subdivisions},
\newblock{\emph{Selected Topics in Graph Theory (L. Beineke and R. Wilson, eds.)}}, vol. 3, Academic Press, 1988, pp. 97--131.


\bibitem{TT81}
C. Thomassen and B. Toft, \newblock{Non-separating induced cycles in graphs},
\newblock{\emph{J. Combin. Theory Ser. B}} \textbf{31} (1981), 199--224.

\bibitem{T63}
W. T. Tutte, \newblock{How to draw a graph},
\newblock{\emph{Proc. London Math. Soc.}} \textbf{13} (1963), 743--768.

\bibitem{V00}
J. Verstra\"ete, \newblock{On arithmetic progressions of cycle lengths in graphs}, \newblock{\emph{Combin.
Probab. Comput.}} \textbf{9} (2000), 369--373.

\bibitem{V16}
J. Verstra\"ete, \newblock{Extremal problems for cycles in graphs},
In \emph{Recent Trends in Combinatorics},
A. Beveridge et al. (eds.), The IMA Volumes in Mathematics and its Applications 159, 83--116, Springer, New York, 2016.



\end{thebibliography}
\end{document}